\renewenvironment{proof}[1][Proof]{\textsc{#1.} }{ \hfill $\square$ \medskip}
\let\argmax\relax
\newcommand{\bsq}{\vrule height .9ex width .8ex depth -.1ex}
\renewenvironment{remark}{\begin{oldremark}}{%
    \hfill \bsq
    \end{oldremark}\ignorespacesafterend%
}
\renewenvironment{example}{\begin{oldexample}}{%
    \hfill \bsq
    \end{oldexample}\ignorespacesafterend%
}
\newcommand{\R}{\mathbb{R}}								
\newcommand{\N}{\mathbb{N}}								
\newcommand{\Rp}{\mathbb{R}_{\geq 0}}					
\newcommand{\E}{\mathbb{E}}								
\renewcommand{\Pr}{\mathbb{P}}							
\newcommand{\In}[1]{\mathds{1}_{\crl*{ #1}}} 			
\newcommand{\Ins}[1]{\mathds{1}_{#1}} 					
\newcommand{\defeq}{\coloneqq}							
\DeclarePairedDelimiter{\abs}{\lvert}{\rvert}
\DeclarePairedDelimiter{\norm}{\lvert\lvert}{\rvert\rvert}
\DeclarePairedDelimiter{\crl}{\{}{\}} 
\DeclarePairedDelimiter{\prn}{(}{)} 
\DeclarePairedDelimiter{\brk}{[}{]} 
\newcommand{\T}{T}										
\newcommand{\voff}{v^{\text{off}}}						
\newcommand{\von}{v^{\text{on}}}						
\newcommand{\rmax}{r_{\varphi}}							
\newcommand{\wmax}{w_{\max}}
\newcommand{\onl}{\textsc{Online}\xspace}
\newcommand{\off}{\textsc{Offline}\xspace}
\DeclareMathOperator{\reg}{Regret}
\DeclareMathOperator{\Bin}{Bin}
\DeclareMathOperator{\argmax}{argmax}
\newcommand{\Xt}{X^t}									
\newcommand{\Xts}{X^{\star t}}							
\let\U\relax
\newcommand{\U}{\mathcal{U}}							
\let\u\relax
\newcommand{\u}{u}					
\newcommand{\Ut}{U^t}
\newcommand{\hUt}{\hat U^t}
\renewcommand{\S}{\mathcal{S}}							
\newcommand{\Tr}{\mathcal{T}}							
\renewcommand{\Re}{\mathcal{R}}                         
\newcommand{\xit}{\xi^t}
\newcommand{\xiT}{\xi^T}
\newcommand{\St}{S^t}
\newcommand{\Nt}{N^t}
\newcommand{\calB}{\mathcal{B}}							
\newcommand{\calF}{\mathcal{F}}
\newcommand{\calG}{\mathcal{G}}
\newcommand{\Rt}{R^t}
\newcommand{\bRt}{\bar R^t}
\newcommand{\sigmat}{\hat\sigma^t}
\newcommand{\Bp}{B_p}									
\newcommand{\Bh}{B_h}									
\newcommand{\Bt}{B^t}
\newcommand{\BL}{L_B}               
\newcommand{\f}{f}
\newcommand{\vf}{\vec{f}}
\newcommand{\vp}{\vec{p}}    
\newcommand{\vw}{\vec{w}}
\newcommand{\vr}{\vec{r}}
\newcommand{\vq}{\vec{q}}
\newcommand{\vx}{\vec{x}}
\newcommand{\vy}{\vec{y}}
\newcommand{\vb}{\vec{b}}
\newcommand{\vd}{\vec{d}}
\newcommand{\ve}{\vec{e}}
\newcommand{\vz}{\vec{z}}
\newcommand{\CP}{P_C} 
\newcommand{\SP}{P_S} 
\newcommand{\Alpha}{\mathcal{A}}
\newcommand{\accept}{\texttt{a}\xspace}
\newcommand{\reject}{\texttt{r}\xspace}
\newcommand{\probe}{\texttt{p}\xspace}
\newcommand{\Gt}{\calG_t}
\newcommand{\calQ}{\mathcal{Q}}
\newcommand{\hatphi}{\hat\varphi}
\newcommand{\oknap}{\texttt{OnlineKnapsack}\xspace}
\newcommand{\rabbi}{\textsc{rabbi}\xspace}
\newcommand{\bandits}{\textsc{Bandits}\xspace}
\newcommand{\explore}{\texttt{explore}\xspace}
\newcommand{\exploit}{\texttt{exploit}\xspace}
\renewcommand{\paragraph}[1]{\smallbreak\noindent \textbf{#1}}
\let\vec\mathbf
\newenvironment{proofof}[1]{%
	\begin{proof}[{\sc Proof of #1}]%
	}{%
	\end{proof}%
}
\def\BIBand{and}%
\begin{document}


 \RUNAUTHOR{Vera, Banerjee and Gurvich}

\RUNTITLE{Online allocation via Bellman Inequalities}

\TITLE{Online Allocation and Pricing: Constant Regret via Bellman Inequalities}

\ARTICLEAUTHORS{%
\AUTHOR{Alberto Vera \qquad Siddhartha Banerjee \qquad Itai Gurvich}
\AFF{School of Operations Research and Information Engineering, Cornell University\\ \EMAIL{aav39@cornell.edu \qquad sbanerjee@cornell.edu \qquad gurvich@cornell.edu}} 
} 

\ABSTRACT{%
We develop a framework for designing simple and efficient policies for a family of online allocation and pricing problems, that includes online packing, budget-constrained probing, dynamic pricing, and online contextual bandits with knapsacks.
In each case, we evaluate the performance of our policies in terms of their regret (i.e., additive gap) relative to an offline controller that is endowed with more information than the online controller. 
Our framework is based on Bellman Inequalities, which decompose the loss of an algorithm into two distinct sources of error: (1) arising from computational tractability issues, and (2) arising from estimation/prediction of random trajectories.
Balancing these errors guides the choice of benchmarks, and leads to policies that are both tractable and have strong performance guarantees. 
In particular, in all our examples, we demonstrate constant-regret policies that only require re-solving an LP in each period, followed by a simple greedy action-selection rule; thus, our policies are practical as well as provably near optimal.
}%

\KEYWORDS{Stochastic Optimization, Approximate Dynamic Programming, Online Resource Allocation, Dynamic Pricing, Online Packing, Network Revenue Management.} 

\maketitle
 \vspace*{-0.8cm}

\section{Introduction}
Online decision-making under uncertainty is widely studied across a variety of fields, including operations research, control, and computer science. 
A canonical framework for such problems is that of Markov decision processes (MDP), with associated use of stochastic dynamic programming for designing policies. 
In complex settings, however, such approaches suffer from the known curse-of-dimensionality; moreover, they also fail to provide insights into structural properties of the problem: the performance of heuristics, dependence on distributional information, etc.

The above challenges have inspired an alternate approach to designing approximate policies for MDPs based on the use of \emph{benchmarks} -- proxies for the value function that provide bounds for the optimal policy, and guide the design of heuristics.
The performance of any policy can be quantified by its additive loss, or \emph{regret}, relative to any such benchmark; this consequently also bounds the additive optimality gap, i.e., performance against the optimal policy.

In this work, we develop \emph{new policies for online resource-allocation problems}: settings where a finite set of resources is dynamically allocated to arriving requests, with associated constraints and rewards/costs. 
Our baseline problem is the online stochastic knapsack problem (henceforth \oknap):
a controller has initial inventory $B$, and requests arrive sequentially over horizon $T$. Each request has a random type corresponding to a resource requirement-reward pair. Requests are generated from a known stochastic process, and are revealed upon arrival; the controller must then decide whether to accept/reject each request, in order to maximize rewards while satisfying budget constraints.
We then consider three variants of this basic setting: (1) online probing, (2) dynamic pricing, and (3) contextual bandits with knapsacks.
These are widely-studied problems, each of which augments the baseline \oknap with additional constraints/controls.
The formal models for these settings are presented in \cref{sec:prelim}

Instead of solving each problem in an ad-hoc manner, however, our policies are all derived from a single underlying framework.
In particular, our results can be summarized as follows:
\smallskip
\paragraph{Meta-theorem} \emph{Given an online allocation problem, we identify an appropriate offline benchmark, and give a simple online policy --  based on solving a tractable optimization problem in each period -- that gets constant regret compared to the benchmark (and thus, compared to the optimal policy).} 
\smallskip

In more detail, our approach is based on adaptively constructing a benchmark that has additional (but not necessarily full) information about future randomness.
Next, in the spirit of online primal-dual methods, we  use our benchmark to construct a feasible online policy.
The centerpiece of our approach are the \emph{Bellman Inequalities}, which characterize what benchmarks are feasible, and also, decompose the regret of an online policy into two distinct terms.
The first, which we call the \emph{Bellman Loss}, arises from computational considerations, specifically, from requiring that the benchmark is tractable (instead of a dynamic program which may be intractable);
The second, which we call the \emph{Information Loss}, accounts for unpredictability across sample paths.
Our policies trade off these two losses to get strong performance guarantees.

Our framework allows flexibility in choosing benchmarks. 
To understand why this is important, consider two common benchmarks for dynamic pricing: a controller has inventory $B$, and posts prices for $T$ sequential customers, each of who has a random valuation.
One common benchmark, known as the \emph{offline} or \emph{prophet} benchmark, considers a controller with \emph{full information} of all randomness; it is easy to show that no online policy can get better than $\Omega(T)$ regret against this benchmark.. 
An alternate benchmark, known as the \emph{ex ante} or \emph{fluid} benchmark, corresponds to replacing all random quantities with their expectations; here again, no online policy can get better than $\Omega(\sqrt{T})$ regret~\citep{bayes_prophet}. Our approach however lets us identify benchmarks which have $O(1)$ regret for all our settings.

Prophet and fluid benchmarks are also widely used in adversarial models of online allocation, leading to algorithms with worst-case guarantees.
In contrast, we consider stochastic inputs, and consequently get much stronger guarantees.
In particular, all our guarantees are \emph{parametric} and depend explicitly on the distributions and problem primitives (i.e., constant parameters defining the instance). 
All our policies, however, have regret that is independent of the horizon and budgets.

\section{Preliminaries and Overview}

\label{sec:prelim}

\subsection{Problem Settings and Results} \label{sec:setting}

We illustrate our framework by developing low-regret algorithms for the following problems:

\paragraph{Online Stochastic Knapsack.}
This serves as a baseline for our other problems. The controller has an initial resource budget $B$, and items arrive sequentially over $T$ periods. Each item has a random {type} $j$ which corresponds to a \emph{known} resource requirement (or `weight') $w_j$ and a \emph{random} reward $R_j$.
In period $t=T,T-1,\ldots,1$ (where $t$ denotes the \emph{time-to-go}), we assume the arriving type is drawn from a finite set $[n]$ from some known distribution $\vp=(p_1,\ldots,p_n)$. 
At the start of each period, the controller observes the type of the arriving item, and must decide to accept or reject the item.
The expected reward from selecting a type-$j$ item is $r_j = \E[R_j]$.

\paragraph{Online Probing.} As before an arriving type $j$ has known expected reward $r_j$, but unknown realized reward $R_j$ -- now the controller has the additional option of probing each request to observe the realization, and then accept/reject the item based on the revealed reward; the controller can also choose to accept the item without probing. 
In addition to the resource budget $B$, the controller has an additional probing budget $B_p$ that limits the number of arrivals that can be probed.
This introduces a trade-off between depleting the resource budget $B$ and probing budget $B_p$.
We assume here that $R_j$ has finite support $\crl{r_{jk}}_{k\in[m]}$ of size $m$, and define $q_{jk}\defeq\Pr[R_j=r_{jk}]$ for $k\in[m]$. 
Note this reduces to \oknap when either $\Bp\geq \T$ or $\Bp=0$.

\paragraph{Dynamic Pricing.} The controller has an initial inventory $B\in\N^d$ for $d$ different resources.
There are $n$ types of customers, where a customer of type $j$ requests a specific subset $A_j\in\crl{0,1}^d$ of resources, and has private valuation $R^t\sim F_j$.
In each period $t$, the controller observes the customer type $j\in [n]$, and if sufficient resources are available, posts a price (fare) $\f$ from a finite set $\crl{\f_{j1},\ldots,\f_{jm}}$; the customer then purchases iff $R^t>\f$.
The vectors $A_j$ and valuation functions $(F_j:j\in [n])$ are known, but otherwise arbitrary.
More generally, our technique handles probabilistic customer-choice models, where a customer, when presented with a price menu over bundles, picks a random bundle via some known distribution (which may depend on the menu).

\paragraph{Knapsack with Distribution Learning.} 
We return to the \oknap setting where items of type $j\in[n]$ have weight $w_j$ and random reward $R_j$; now however the controller is unaware of the distribution of $R_j$, and must learn it from observations.
In period $t$, the controller observes the arrival-type $j$, and decides to accept/reject based on observed rewards up to time $t$. 
We consider two feedback models: \emph{full feedback} where the controller observes $R_j$ regardless of whether the item is accepted or rejected, and \emph{censored feedback} where the controller only observes rewards of accepted items; for the latter (which is sometimes referred to as online contextual bandits with knapsacks), we assume the rewards $R_j$ have sub-Gaussian tails~\citep[Section 2.3]{concentration_book}.

\paragraph{Benchmarks and guarantees.} 
Our framework, \rabbi (\emph{Re-solve and Act Based on the Bellman Inequalities}; see \cref{sec:resolve}) is based on comparing two `controllers': \off, who acts optimally given future information, and a non-anticipative controller \onl who tries to follow \off. 
Both start in the same initial state $S^\T$.
We denote $\voff$ as the expected total reward collected by \off acting optimally (i.e., according to a Bellman equation) given its information structure.
In contrast, \onl uses a non-anticipative policy $\pi$ that maps current states to actions, resulting in a total expected reward $\von_{\pi}$. 

Let $\pi_R$ denote the online policy produced by our \rabbi framework, and $\pi$ denote any non-anticipative policy.
Then the expected regret of $\pi_R$ relative to the chosen offline benchmark is
\begin{align*}
\E[\reg] \defeq  \voff-\von_{\pi_R} \geq \max_{\pi}\left[\von_\pi\right]-\von_{\pi_R}
\end{align*}
The last inequality, which follows from the fact that $\von_{\pi}\leq \voff$ for any pair of benchmark and online policies, emphasizes that the regret is a bound on the \emph{additive gap w.r.t. the best online policy}.

For all the above problems, we use the \rabbi framework to identify an appropriate benchmark, with respect to which we get the following guarantees: First, for the \oknap, we recover a result proved in~\cite{multi_secretary,bayes_prophet}
\begin{theorem}[Theorem~1 in~\citet{multi_secretary}]
\label{theo:baseline}
For known reward distributions with finite mean, an online policy based on the \rabbi framework obtains regret that depends only on the primitives $(n,\vp,\vr,\vw)$, but is independent of the horizon length $T$ and resource budget $B$.
\end{theorem}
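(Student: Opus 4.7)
The plan is to follow the template suggested by the paper: express the regret as a telescoping sum of per-period ``Bellman gaps'' produced by the \rabbi benchmark, and then bound the resulting Information Loss by exploiting the self-correcting nature of repeated LP re-solving. Concretely, for the baseline \oknap I would take the benchmark $\voff$ to be the value of the fluid LP that, given remaining budget $b$ and $t$ periods-to-go, maximizes $\sum_{s \leq t} \sum_j p_j[s] r_j x_{js}$ subject to $\sum_{s,j} w_j p_j[s] x_{js} \leq b$ and $x_{js} \in [0,1]$. The first step is to check that this LP value weakly dominates the true value function of the dynamic program and satisfies the appropriate Bellman inequality: for any $(b,t)$, the LP value is at least the expected sum of the one-period reward under its own prescription and the LP value of the resulting next state.

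Second, I would specify the \rabbi policy as re-solving this LP at every period with the current remaining budget and observed arrival type, and accepting or rejecting according to the LP prescription. Telescoping the Bellman inequality along the sample path then expresses the regret as a sum of per-period gaps between the LP-value drop and the reward actually earned. Because the LP in this baseline is a genuine relaxation of the one-step Bellman equation, the Bellman Loss component of the decomposition vanishes identically, and only the Information Loss remains to control.

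The technical core is then to bound the Information Loss by a constant depending only on $(n, \vp, \vr, \vw)$. The fluid LP has a simple structure -- it fills types in decreasing order of reward-to-weight ratio until the capacity constraint becomes tight -- so its optimal basis is insensitive to the remaining budget except near a finite set of ``switching'' thresholds where one more or one less type becomes marginal. Away from these thresholds, re-solving prescribes the same action as if we had consumed the expected budget, and the per-period gap is zero in expectation; near a threshold, a single period contributes at most a primitive-sized loss. The argument then reduces to showing that the expected number of periods in which the realized budget process sits in a threshold neighborhood is controlled by the primitives, independently of $T$ and $B$.

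The main obstacle is precisely this last step. Naively, the budget-deviation random walk has $\Theta(\sqrt{T})$ typical fluctuations, so direct counting of threshold-adjacent periods would lose a $\sqrt{T}$ factor. The resolution, as in \cite{multi_secretary, bayes_prophet}, is that re-solving \emph{realigns} the LP prescription with the updated remaining budget at every step, so it is not the budget deviations but the rate at which ``marginal'' arrivals occur near the threshold that drives the loss; a return-time or concentration argument on the at-most-one fractional variable of the knapsack LP basis, combined with the finite-mean assumption on $R_j$ and a mild non-degeneracy observation, then yields a bound depending only on $(n, \vp, \vr, \vw)$. This is the step I expect to require the most care.
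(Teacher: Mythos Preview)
There is a genuine gap: your benchmark is the deterministic fluid LP $\hat\varphi(t,b)$ with right-hand side $\E[Z(t)]$, but that is not the benchmark in the theorem, which is the offline controller that knows the realized arrival sequence (equivalently $Z(T)$). More fundamentally, the paper notes in the introduction that no online policy can achieve better than $\Omega(\sqrt{T})$ regret against the fluid/ex-ante benchmark, so a constant bound against $\hat\varphi(T,B)$ is impossible regardless of the downstream analysis. The specific claim that breaks is ``the Bellman Loss component \ldots\ vanishes identically'': once the arrival $\xi^t$ is revealed, the pathwise inequality $\hat\varphi(t,b)\le \max\{r_{\xi^t}+\hat\varphi(t-1,b-w_{\xi^t}),\,\hat\varphi(t-1,b)\}$ need not hold. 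With unit weights, $r_1\gg r_2$, $p_1$ small, and $b\approx p_1 t$, a type-$2$ arrival produces a strictly positive gap---neither accepting nor rejecting recovers the expected type-$1$ mass that $\hat\varphi(t,b)$ was counting on---and these gaps accumulate at rate $\sqrt{T}$. The threshold/return-time heuristic in your last paragraph targets a different loss term and cannot close this.

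The paper's route is to take as relaxation the LP with \emph{random} right-hand side, $\varphi(t,b)=\max\{\vr'\vx_{\accept}:\vw'\vx_{\accept}\le b,\ \vx_{\accept}+\vx_{\reject}=Z(t)\}$, adapted to the augmented filtration $\calG_t$. By \cref{lem:collect}, whenever some optimal solution has $x_{\xi^t,\accept}\ge 1$ or $x_{\xi^t,\reject}\ge 1$, this $\varphi$ satisfies the Bellman inequality with zero loss (\cref{ex:indicator}); the Bellman Loss is $\rmax$ times the indicator of an exclusion set $\calB(t,b)$ of summable probability. The fluid LP $\hat\varphi$ enters only as the \emph{proxy} for guessing a satisfying action, and the Information Loss is controlled by LP sensitivity in $\|Z(t)-\mu(t)\|$, again summable via concentration. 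Both losses are present; neither vanishes.
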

The above builds intuition for using \rabbi in more complex settings. 
In particular, the benchmark used in~\cref{theo:baseline} is the full-information prophet, which is too loose for obtaining constant regret in the remaining settings (pricing, probing, and bandits; see~\cref{ex:full_info}).
This is where our framework helps in guiding the choice of the right benchmark. 
In particular, we obtain the following results:
\begin{theorem}[Online Probing]\label{theo:reg_probing}
For reward distributions with finite support of size $m$, an online-probing policy based on the \rabbi framework (\cref{alg:probing}) obtains regret that depends only on $(n,m,\vq,\vp,\vr)$, but is independent of horizon length $T$, resource budget $B$ and probing budget $\Bp$. 
\end{theorem}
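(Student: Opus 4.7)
The plan is to instantiate the RABBI framework with an ``offline-types'' benchmark \off which is granted the full sequence of arriving types $(j_T,\ldots,j_1)$ in advance but still faces reward uncertainty. Conditional on a type sequence, \off's optimal value admits an LP formulation that decides, for each type $j$, how many arrivals to accept blindly (expected reward $r_j$, weight $w_j$), how many to probe (consuming one unit of probing budget, then accept iff the realized reward clears a dual threshold), and how many to reject, subject to the resource budget $B$ and the probing budget $\Bp$. Taking expectation over types yields a per-state benchmark value whose one-step Bellman inequality is itself an LP in the remaining state $(t,b,b_p)$. The RABBI policy $\pi_R$ then re-solves this LP at every period using the remaining budgets and horizon, and, upon observing the arriving type $j$, takes the action the LP prescribes for type $j$.

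By the Bellman-inequality decomposition from \cref{sec:resolve}, the expected regret splits into a Bellman Loss (per-period slack in the one-step inequality for the benchmark) and an Information Loss (the mismatch between \off's hindsight and the RABBI action). Since the benchmark already has full type-information, the Information Loss arises only from reward randomness and boundary effects of the LP basis, not from type arrivals. The main step is then to bound the Bellman Loss to $O(1)$ independent of $T$, $B$, and $\Bp$. Following the template used for \cref{theo:baseline}, I would exploit the structure of basic feasible solutions: with two knapsack constraints, at most two arrival types are fractional in any LP basis, so for all other types the RABBI action matches the LP exactly and contributes zero slack. For each fractional type, the per-period slack is $O(1)$ in magnitude, and the number of periods in which it persists is controlled via a martingale argument on the remaining-budget process — whenever both budgets are comfortably interior, the LP basis is locally constant and slack accumulates only on concentration-like events of $O(1)$ total size.

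The main obstacle is handling the coupling between the two knapsacks: unlike the single-resource baseline, the LP's optimal basis can shift between a resource-binding regime and a probing-binding regime as budgets deplete, and near transition boundaries certain types can remain fractional for stretches of time. Controlling this requires showing that the number of basis transitions along the RABBI trajectory is $O(1)$ in expectation, which should follow from concentration of the primal drift of both budgets under $\pi_R$ combined with the continuity of the LP's value in $(b,b_p)$. The resulting constants depend on the support size $m$, the probabilities $\vq$, and the dual multipliers at the LP's extreme points, all determined by $(n,m,\vq,\vp,\vr)$ alone, as claimed.
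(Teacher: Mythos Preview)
There are genuine gaps in your plan, both in identifying the sources of the two loss terms and in the machinery you propose for bounding them.

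\textbf{Information Loss.} You assert that, because \off knows the full type sequence, the Information Loss ``arises only from reward randomness and boundary effects of the LP basis, not from type arrivals.'' This inverts the actual mechanism. The relaxation in the paper is the \emph{pathwise} object $\varphi(t,\vb)=P[\vb,Z(t)]$, a function of the realized future type counts $Z(t)$; the proxy $\hat\varphi(t,\vb)=P[\vb,\E[Z(t)]]$ that RABBI solves replaces $Z(t)$ by its mean. The Information Loss is the probability that the action chosen from the proxy is not satisfying for the pathwise relaxation, and it is driven entirely by the deviation $\|Z(t)-\mu(t)\|$. The paper controls it via an LP-sensitivity bound (Mangasarian's theorem gives $\|X^\star-X\|_\infty\le\kappa\|Z(t)-\mu(t)\|_1$) followed by a standard concentration sum. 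Reward randomness plays no role here: it enters the LP only through the deterministic coefficients $q_{jk}$ in the constraint $x_{jk\accept}+x_{jk\reject}=q_{jk}x_{j\probe}$.

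\textbf{Bellman Loss.} The paper does not track LP bases or count regime changes between resource-binding and probing-binding. The Bellman Loss is $\rmax\cdot\mathds{1}_{\{\calB(t,\vb)\}}$, where the exclusion event $\calB$ is simply that no optimal solution of $P[\vb,Z(t)]$ has a first-stage component $\ge 1$ for the arriving type (and, if probing, a second-stage component $\ge 1$ for the revealed sub-type). Because the LP carries the equalities $x_{j\accept}+x_{j\probe}+x_{j\reject}=Z_j(t)$ and $x_{jk\accept}+x_{jk\reject}=q_{jk}x_{j\probe}$, the exclusion event is contained in $\{Z_{\xi^t}(t)<6/q_{\xi^{t-1/2}}\}$, and $\sum_t\Pr[\calB(t,\vb)]$ is bounded by an elementary Chernoff sum depending only on $\vp,\vq$. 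Your proposed route---show that the number of basis transitions along the RABBI trajectory is $O(1)$ in expectation---is a much harder and unnecessary statement; you yourself flag it as ``the main obstacle'' and offer only a heuristic (``should follow from concentration of the primal drift'').

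Finally, the sentence ``taking expectation over types yields a per-state benchmark value whose one-step Bellman inequality is itself an LP'' suggests you are collapsing the relaxation to something deterministic in the type sequence. If $\varphi$ does not retain its dependence on the realized $Z(t)$, you lose the pathwise Bellman structure that makes both losses summable; a purely fluid (ex ante) benchmark is known to force $\Omega(\sqrt{T})$ regret, so that architecture cannot deliver the stated constant-regret guarantee.
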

\begin{theorem}[Dynamic Pricing]\label{theo:reg_pricing}
For any reward distributions $(F_j:j\in [n])$ and prices $\vf$, a pricing policy based on the \rabbi framework (\Cref{alg:pricing}) obtains regret that depends only on $(A,\vf,F_1,\ldots,F_n)$, but is independent of horizon length $T$ and initial budget levels $B\in\N^d$. 
\end{theorem} 
The result for dynamic pricing also extends naturally to resource bundles and general customer-choice models (see~\cref{ssec:consumerchoice} and \cref{theo:reg_multi_pricing} therein).

For the bandit settings, we define a separation parameter $\delta = \min_{j\neq j'} \abs*{\E[R_j]/w_j-\E[R_{j'}]/w_{j'}}$; this is only for our bounds, and is not known to the algorithm.
\begin{theorem}[Knapsack with Distribution Learning] \label{theo:reg_learning} 
Assuming the reward distributions are sub-Gaussian, in the full feedback setting, a policy based on the \rabbi framework (\Cref{alg:bandits}) obtains regret that depends only on the primitives $(n,\vp,\vr,\vw,\delta)$ and is independent of the horizon length $T$ and knapsack capacity $B$. 
\end{theorem}
The last result can also be used as a black-box for the censored feedback setting to get an $O(\log T)$ regret guarantee (see \cref{cor:learning} in \cref{sec:bandits}).

\subsection{Overview of our Framework}\label{sec:framework}

We develop our framework in the full generality of MDPs in~\cref{sec:bellman}.
To give an overview and gain insight into the general version, we use \oknap as a warm-up. A schema for the framework is provided in~\cref{fig:diagram_simple}.

In the \oknap problem, at any time-to-go $t$, let $Z_j^t\in\N$ denote the (random) number of type-$j$ arrivals in the remaining $t$ periods.
Recall rewards of type-$j$ arrivals have expected value $r_j\defeq \E[R_j]$.
Define \off to be a controller that knows $Z^t$ for all $t$ in advance.
The total reward collected by \off can be written as an integer linear program
\begin{equation}\label{eq:knapsack_val}
V(t,b|Z^t) = \max_{\vx_\accept\in\N^n}\crl{\vr'\vx: \vw'\vx_{\accept}\leq b, \vx_{\accept}\leq  Z^t}
=\max_{\vx_\accept,\vx_\reject\in\N^n}\crl{\vr'\vx_\accept: \vw'\vx_\accept\leq b, \vx_{\accept}+\vx_{\reject} = Z^t}.
\end{equation}
The function $V(\cdot|Z^t)$ is thus \off's value function (see \cref{fig:diagram_simple}), where the notation $|Z^t$ emphasizes that $V$ is conditioned on $Z^t$.
Moreover, for every $j$, the variables $x_{\accept,j},x_{\reject,j}$ represent \emph{action summaries}: the number of type-$j$ arrivals accepted and rejected, respectively.

$V(\cdot|Z^t)$ can also be represented via Bellman equations. Specifically, at time-to-go $t$, assuming \off has budget $b$ and the arriving type is $\xi$, the value function obeys the Bellman equation
\begin{equation*}
V(t,b|Z^t) = \max\crl*{\brk*{r_{\xit}+V(t-1,b-w_{\xit}|Z^{t-1})}\In{w_{\xit}\leq b}, V(t-1,b|Z^{t-1})}, \quad \forall t,b,\xit.
\end{equation*}

\begin{figure}[!ht]
	\centering
	\includegraphics[scale=0.8]{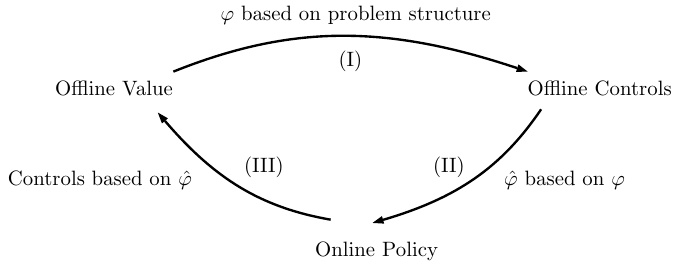}
	\caption{The \rabbi framework: We first define \off's value function by specifying access to future information. 
	Next, we identify a tractable relaxation $\varphi$ for \off's value under this same information structure (step I). 
	Finally, we introduce a non-anticipative estimate $\hatphi$ for $\varphi$, and use it to design online controls (step II). The resulting online policy is evaluated against \off's value (step III).}
	\label{fig:diagram_simple}
\end{figure}

Next consider the linear programming relaxation for $V(t,b)$
\begin{align*}
\varphi(t,b|Z^t) \defeq \max_{\vx_\accept,\vx_\reject\geq 0}\crl{\vr'\vx_\accept: \vw'\vx_\accept\leq b, \vx_{\accept}+\vx_{\reject} = Z^t},
\end{align*}

It is clear that $\varphi$ is more tractable compared to $V$, and also, that it approximates $V$ up to an integrality gap.
However, $\varphi$ \emph{does not obey a Bellman equation}.
To circumvent this, we introduce the notion of \emph{Bellman Inequalities}, wherein we require that $\varphi$ satisfies Bellman-like conditions for `most' sample paths. 
Formally, for some random variables $\BL$, we want $\varphi$ to satisfy 
\begin{equation*} 
\varphi(t,b|Z^t) \leq \max\crl*{\brk{r_{\xit}+\varphi(t-1,b-w_{\xit}|Z^{t-1})}\mathds{1}_{\{w_{\xit}\leq b\}}, \varphi(t-1,b|Z^{t-1})} + \BL(t,b).
\end{equation*} 
Note that, if $\E[\BL(t,b)]$ is small, with expectation taken over $Z^t$, then $\varphi$ `almost' satisfies the Bellman equations.
We henceforth refer to $\varphi$ as a \emph{relaxed value} for $V$ and $\BL$ the \emph{Bellman Loss}.

Establishing that actions derived from $\varphi$ are nearly optimal for \off accomplishes step (I) in \cref{fig:diagram_simple}. For step (II), we want to emulate \off by estimating $\varphi$ based on current information. 
A natural estimate is obtained by taking expectations over future randomness, to get:
\begin{align*}
\hatphi(t,b) \defeq \max_{\vy_\accept,\vy_\reject\geq 0}\crl{\vr'\vy_\accept: \vw'\vy_\accept\leq b, \vy_{\accept}+\vy_{\reject} = \E[Z^t]}.
\end{align*}
Note that $\hatphi$ \emph{does not approximate $V$ or $\varphi$} up to a constant additive error~\cite{bayes_prophet}; however, $\hatphi$ can be used as a predictor for the action taken by \off.
Specifically, at time $t$ with current budget $b$, \rabbi first computes $\hatphi(t,b)$ and then interprets the solution $\vy$ as a score for each action (here, accept/reject).
We show that taking the action with the highest score (i.e., action $\argmax_{u\in\crl{\accept,\reject}}\crl{y_{\xit,u}}$) guarantees that that \onl and \off play the same action with high probability.
Whenever \off and \onl play different actions, then we incur a loss, which we refer to as the {\em Information Loss}, as it quantifies how having less information impacts \onl's actions. 
This process of using $\hatphi$ to derive actions is represented as step (III) in~\cref{fig:diagram_simple}.

\paragraph{Towards a general framework.}
For all the problems in \cref{sec:setting}, our approach uses a similar three-step process, wherein we choose an \off benchmark, identify relaxed value $\varphi$ via appropriate optimization problem, and get an online policy based on estimate $\hatphi$. Consequently, we refer to our framework as \rabbi, which stands for \emph{Re-solve and Act Based on Bellman Inequalities}. 

Our work builds on constant-regret policies for multidimensional packing~\cite{bayes_prophet}, and more general online optimization problems~\citep{banerjee2020uniform}. The techniques developed in these works, however, have two fundamental shortcomings that prevent them from addressing the settings we consider:
\begin{itemize}
\item Use of full-information benchmarks: Existing works~\citep{multi_secretary,bayes_prophet,banerjee2020uniform} use the full information benchmark, which is too loose for our settings. Indeed, for probing/pricing/learning settings, \emph{no algorithm can have constant regret compared to the full information benchmark} (see~\cref{ex:full_info}).
\item Explicit value-function characterizations: The optimization problem in~\cref{eq:knapsack_val} has a closed-form solution, which was used explicitly by~\citep{multi_secretary,bayes_prophet,banerjee2020uniform}; this does not extend to more complex settings.
\end{itemize}
Our framework in this work resolves these shortcomings in a structured way, allowing us to get provably near-optimal algorithms for several canonical resource allocation problems. 
Moreover, we do so via a generalized notion of information-augmented benchmarks, and our decomposition of the regret into the Information Loss (capturing randomness in inputs) and Bellman Loss (capturing limited computational power). This flexibility helps greatly in the design of our algorithms.

\subsection{Related Work}

Our approach has commonalities with two closely related approaches:
\paragraph{Prophet Inequalities and Ex-Ante Relaxations:}
A well-studied framework for obtaining performance guarantees for heuristics policies is to compare against a full information agent, or ``prophet''.
This line of work focuses on competitive-ratio bounds, see \citep{kleinberg2012matroid,duetting2017prophet,correa2017posted} for overviews of the area.
In particular, \citep{correa2017posted} obtains a multiplicative guarantee for dynamic posted pricing with a single item under worst case distribution. 
A related line of work considers the use of ex-ante LP relaxations~\cite{alaei2014bayesian,buchbinder2014secretary} for obtaining worst-case competitive guarantees in online packing problems.
In contrast, we obtain an additive guarantee for multiple items in a parametric setting.

\paragraph{MDP Dual Relaxations.} A standard way to get bounds on MDPs is via information-relaxations, which at a high level, create benchmarks by endowing \off with additional information, while forcing it to `pay a penalty' for using this information.
\citep{info_relaxation,info_relaxation2} use this in a \emph{dual-fitting} approach, to construct performance bounds for greedy algorithms in different problems. 
In contrast, our framework is similar to a \emph{primal-dual} approach: we adaptively construct our relaxations, and derive controls directly from them.
We compare the two approaches in more detail in Appendix~\ref{sec:inforelax}. 

\medskip
Moreover, the different problems we apply \rabbi each have a large body of prior work. 

\paragraph{Online Packing.}
There is a long line of work on the baseline \oknap and generalizations.
A notable work in this line is~\citet{jasin2012}, who gives a policy with constant expected regret when the problem instance is far from a set of certain \emph{non-degenerate} instances.
This inefficiency, though, is fundamental, since they use the ex ante (or fluid) benchmark, which has $\Omega(\sqrt{T})$ under non-degeneracy. More recently, \citep{wang_resolve} partially extend the result of~\citep{multi_secretary} for more general packing problems; however their policy only gives constant regret under i.i.d. Poisson arrivals, and require the system to be scaled linearly (i.e., $B$ grows proportional to $T$).
In contrast,~\citep{multi_secretary} (one dimension) and~\citep{bayes_prophet} (multiple dimensions) provide constant regret policies with no assumption on the scaling. 
The approach in the latter is further generalized in~\cite{banerjee2020uniform} to handle more complex problems including bin-packing and QOS constraints.
See~\cite{bayes_prophet,banerjee2020uniform} for more discussion and references.

\paragraph{Probing.} 
Approximation algorithms have been developed for {\em offline} probing problems, both under budget constraints~\citep{probing_applications} and probing costs~\citep{weitzman1979optimal,price_info}. 
Another line of work pursues tractable {\em non-adaptive} constant-factor competitive algorithms for this problem~\citep{adaptivity_gaps}. In terms of {\em online adaptive} algorithms,~\citet{probing_submodular} introduces an algorithm with bounded competitive ratio in an adversarial setting.

\paragraph{Dynamic posted pricing.} This is a canonical problem in operations management, with a vast literature; see \citet{talluri_book} for an overview. 
Much of this literature focuses on asymptotically optimal policies in regimes where the inventory $B$ and/or horizon $T$ grow large. 
When $B$ and $T$ are scaled together by a factor $k$, there are known algorithms with regret that scales as $O(\sqrt{k})$ or $O(\log(k))$, depending on assumptions on the primitives (e.g., smoothness of the demand with price)~\citep{jasin_pricing}.
There is also vast literature on pricing when the demand function is not known and has to be learned~\citep{jasin_learning}. 
Finally, under adversarial arrivals,~\citet{dynamic_pricing_limited} provides a policy with $O((B\log T)^{2/3})$ regret under \emph{adversarial inputs}, as opposed to our $O(1)$ guarantee under stochastic inputs.

\paragraph{Knapsack with learning.} Multi-armed bandit problems have been widely studied, and we refer to \citet{bubeck,bubeck_book} for an overview.
Bandit problems with combinatorial constraints on the arms are known as Bandits With Knapsacks~\citep{bandits_with_knapsacks}, and the generalization where arms arrive online is known as Contextual Bandits With Knapsacks~\citep{resourceful_bandits,linear_contextual}. Results in this literature typically  study worst-case distributions. We, in contrast, pursue parametric regret bounds that explicitly depend on the (unknown) discrete distribution. 
Closest to our work is~\citet{srikant}, who provide a UCB-based algorithm that gets $O(\sqrt{T})$ regret (in contrast, we get $O(\log T)$ regret for the same setting).

\section{Approximate Control Policies via the Bellman Inequalities}
\label{sec:bellman}

In this section, we describe our general framework.
Before proceeding, we introduce some notation: We work an underlying probability space $(\Omega,\Sigma,\Pr)$, and for any event $\calB\subseteq\Omega$, we denote its complement by $\calB^c$.
We use boldface letters to indicate vector-valued variables (e.g. $\vp, \vw$, etc.), and capital letters to denote matrices and/or random variables. 
For an optimization problem $(P)$, we use $P$ to denote its optimal value.
When using LP formulations with decision variables $\vx$, we interchangeably use $x_{ij}=x(i,j)$ to denote the $(i,j)^{th}$ component of $\vx$.

\subsection{Offline Benchmarks and Bellman Inequalities}
\label{subsec:offline}

We consider an online decision-making problem with state space $\S$ and action space $\U$, evolving over periods $t=\T,\T-1,\ldots,1$; here $T$ denotes the horizon, and $t$ is the {time to-go}. In any period $t$, the controller first observes a random arrival $\xit\in\Xi$, following which it must choose an action $\u\in\U$. 
For system-state $s\in\S$ at the beginning of period $t$, and random arrival $\xi\in\Xi$, an action $\u\in\U$ results in a reward $\Re(s,\xi,\u)$, and transition to the next state $\Tr(s,\xi,\u)$. 
We assume both reward and future state are random variables whose realizations are determined for every $\u$ given $\xi$. 
This assumption is for ease of exposition only; our results can be extended to hold when rewards or transitions are random given $\xi$.

The feasible actions for state $s$ and input $\xi$ correspond to the set $\crl{\u\in\U:\Re(s,\xi,\u)>-\infty}$.
We assume that this feasible set is non-empty for all $s\in\S,\xi\in\Xi$, and also, that the maximum reward is bounded, i.e., $\sup_{s\in \S,\xi\in\Xi,\u\in\mathcal{U}}\Re(s,\xi,\u)<\infty$.

The MDP described above induces a natural filtration $\calF$, with $\calF_t = \sigma(\crl{\xi^\tau:\tau \geq t})$; a non-anticipative policy is one which is adapted to $\calF_t$.
We allow \off to use a \emph{richer} information filtration $\calG$, where $\calG_t\supseteq \calF_t$. Note that since $t$ denotes the time-to-go, we have $\calG_{t-1}\supseteq \calG_t$. 
Henceforth, to keep track of the information structure, we use the notation $f(\cdot|\calG_t)$ to clarify that a function $f$ is measurable with respect to the sigma-field $\calG_t$.

Given any filtration $\calG$, \off is assumed to play the optimal policy adapted to $\calG$, hence \off's value function is given by the following Bellman equation:
\begin{equation}
\label{eq:bellman_mcdp}
	V(t,s|\calG_t) = \max_{\u\in\U}\crl{\Re(s,\xit,\u) +\E[V(t-1,\Tr(s,\xit,\u)|\calG_{t-1})|\calG_t]},
\end{equation}
with the boundary condition $V(0,\cdot)=0$. 
We denote the expected value as $\voff\defeq\E[V(T,S^T|\calG_T)]$. 
Note that $\voff$ is an upper bound on the performance of the optimal non-anticipative policy. 

We present a specific class of filtration (generated by augmenting the  canonical filtration) that suffice for our applications (see \cref{fig:canonical} for an illustration of the definition).
\begin{definition}[Canonical augmented filtration]
	\label{def:canonical_filtration}
	Let $G_\Theta \defeq (G_\theta: \theta\in\Theta)$ be a set of random variables.
	The canonical filtration w.r.t.\ $G_\Theta$ is 
	\begin{align*}
		\calG_t = \sigma(\crl{\xi^l:l\geq t}\cup G_\Theta)
		\supseteq \calF_t.
	\end{align*}
\end{definition}
The richest augmented filtration is the \emph{full information} filtration, wherein $\calG_t=\calF_1$ for all $t$, i.e., the canonical filtration with $G_\Theta = (\xit:t\in [T])$.
As $\calG_t$ gets coarser, the difference in performance between \off and \onl decreases.
Indeed, when $\calG=\calF$, then \cref{eq:bellman_mcdp} reduces to the Bellman equation for the value-function of an optimal non-anticipative policy:
\begin{equation*}
	V(t,s|\calF_t) = \max_{\u\in\U}\crl{\Re(s,\xit,\u) +\E[V(t-1,\Tr(s,\xit,\u)|\calF_{t-1})]},
	\quad V(0,\cdot,\cdot) = 0,
\end{equation*} 
where the expectation is taken with respect to the next period's input $\xi^{t-1}$.

\begin{example}[Full Information is Too Loose]\label{ex:full_info}
Consider a dynamic pricing instance with $n=d=1$, prices $\vf = (1,2)$, and valuation distribution $\Pr[R^t=1+\varepsilon] = p$ and $\Pr[R^t=2 + \varepsilon] = 1-p$.
When $B=T$, the optimal policy always posts a price that maximizes $\left(\f\cdot \Pr[R^t > \f]\right)$.
If $p\geq 1/2$, then the optimal policy (DP) always posts price $\f=1$ and has expected reward $T$.
On the other hand, full information can post price $R^t -\varepsilon$ at time $t$ and extract full surplus $\voff = \sum_t \E[R^t -\varepsilon] = T(2-p)$.
Thus the regret against full information must grow as $\Omega(T)$.
\emph{This example is not pathological}; the same behavior persists even in random instances (see \cref{sec:pricing_numeric}).
\end{example}

\begin{figure}[t]
    \centering
    \includegraphics[scale=0.65]{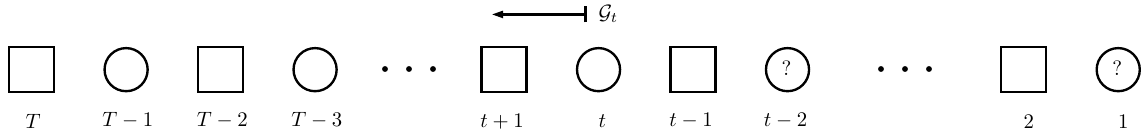}
    \caption{Illustration of \cref{def:canonical_filtration}.
    In online probing (see~\cref{sec:probing}), arrivals first reveal their public type, then the controller chooses an action (accept/probe/reject), and then the private type (true reward) is revealed.
    Squares (resp. circles) represent public (resp. private) information.
    The filtration $\calG$ used by \rabbi comprises of all public types, i.e.\ $G_\Theta =(\xi^\theta: \xi^\theta \text{ is a public type})$.
    At time $t$, \off knows all the information thus far (to the left and including $t$), plus the future squares.
    }
    \label{fig:canonical}
\end{figure}

We are now ready to introduce the notion of relaxed value $\varphi$ and Bellman Inequalities. 
Intuitively, $\varphi$ is ``almost'' defined by a dynamic-programming recursion; quantitatively, whenever $\varphi$ does not satisfy the Bellman equation, we incur an additional loss $\BL$, which we denote the Bellman loss.

\begin{definition}[Bellman Inequalities]
\label{def:relaxed_bellman}
The family of r.v.\ $\crl{\varphi(t,s)}_{t,s}$  satisfies the Bellman Inequalities w.r.t.\ filtration $\calG$ and r.v.\ $\crl{\BL(t,s)}_{t,s}$ if $\varphi(t,\cdot)$ and $\BL(t,\cdot)$ are $\calG_t$-measurable for all $t$ and the following conditions hold:
\begin{enumerate}
\item Initial ordering: $\E[V(\T,S^T)|\calG_T] \leq \varphi(\T,S^\T|\calG_{\T})$.
\item Monotonicity: $\forall s\in\S,t\in[T]$, 
\begin{equation}\label{eq:relaxed_bellman}
\varphi(t,s|\calG_t) \leq \max_{\u\in\U}\crl{\Re(s,\xit,\u)+\E[\varphi(t-1,\Tr(s,\xit,\u)|\calG_{t-1})|\calG_t]} + \BL(t,s).
\end{equation}
\item Terminal Condition: $\varphi(0,s)=0\,\forall\,s\in\S$
\end{enumerate}
We refer to $\varphi$ and $\BL$ as the \emph{relaxed value} and  \emph{Bellman loss} pair with respect to $\mathcal{G}$, and use $|\calG_t$ to remind the reader that we need the information contained in $\calG_t$ to evaluate $\varphi(t,s)$
\end{definition} 
Given any $\varphi$, monotonicity holds trivially with $\BL=\varphi$ (but leads to poor performance guarantees).
On the other hand, $\varphi$ (which may be intractable) is the only value function guaranteeing $\BL=0$.
The crux of our approach is to identify a good $\varphi$ balances the loss and tractability.

A special case is when the Bellman Loss is $0$ over sample paths in some chosen set:
\begin{definition}[Exclusion Sets]
\label{def:exclusion}
A set $\calB(t,s)$ is an \emph{exclusion set} if we can write the Bellman Loss as $\BL(t,s)=\rmax \Ins{\calB(t,s)}$ for some constant $\rmax>0$ and events $\calB(t,s)\subseteq\Omega$.
\end{definition}
If the Bellman Loss can be defined with exclusion sets, then from \cref{def:relaxed_bellman} (monotonicity) we obtain the condition $\varphi(t,s|\calG_t) \leq \max_{\u\in\U}\crl{\Re(s,\xit,\u)+\E[\varphi(t-1,\Tr(s,\xit,\u)|\calG_{t-1})|\calG_t]}$, i.e., monotonicity is satisfied for all realizations $\omega\in \Omega$ except for those in the exclusion set $\calB(t,s)$.

To build intuition, we specify the Bellman Inequalities for our baseline \oknap.
For this end, we first need the following lemma characterizing the sensitivity of LP solutions.

\begin{lemma}\label{lem:collect} 
Consider an LP $(P[\vd]):\max\{\vr'\vx:M\vx=\vd,\vx\geq 0\}$ , where $M\in\R^{m\times n}$ is an arbitrary constraint matrix. 
If $\bar{\vx}$ solves $(P[\vd])$ and $\bar{x}_j\geq 1$ for some $j$, then $P[\vd]=r_j+P[\vd-M_j]$. 
\end{lemma}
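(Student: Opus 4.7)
The plan is to prove the two inequalities $P[\vd] \geq r_j + P[\vd - M_j]$ and $P[\vd] \leq r_j + P[\vd - M_j]$ separately, each by exhibiting an explicit feasible point to the relevant program. Both directions are essentially a one-line construction using the fact that shifting the $j$-th coordinate of $\vx$ by one moves the right-hand side by $M_j$ and the objective by $r_j$.

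For the first direction, I would take any feasible point $\vy \geq 0$ for $(P[\vd - M_j])$, so that $M\vy = \vd - M_j$, and consider $\vy + \ve_j$, where $\ve_j$ is the $j$-th standard basis vector. Nonnegativity is preserved and $M(\vy + \ve_j) = (\vd - M_j) + M_j = \vd$, so $\vy + \ve_j$ is feasible for $(P[\vd])$. Its objective value is $\vr'\vy + r_j$, which upon maximizing over $\vy$ yields $P[\vd] \geq P[\vd - M_j] + r_j$.

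For the reverse direction, I would use the hypothesis $\bar{x}_j \geq 1$ and set $\vy = \bar{\vx} - \ve_j$. Then $\vy \geq 0$ (because $\bar x_j \geq 1$ and $\bar x_i \geq 0$ for $i\neq j$) and $M\vy = M\bar{\vx} - M_j = \vd - M_j$, so $\vy$ is feasible for $(P[\vd - M_j])$. Its objective is $\vr'\bar{\vx} - r_j = P[\vd] - r_j$, which gives $P[\vd - M_j] \geq P[\vd] - r_j$. Combining the two inequalities yields the claim.

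There is no real obstacle here: the lemma is an elementary bookkeeping statement about standard-form LPs, and the only place the hypothesis $\bar{x}_j \geq 1$ is used is to guarantee that $\bar{\vx} - \ve_j$ remains nonnegative in the second direction. I would note at the end that the same proof works verbatim if $\vx$ is restricted to the integers (as needed for the integer knapsack applications), since the shift by $\ve_j$ preserves integrality.
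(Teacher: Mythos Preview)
Your proof is correct and uses the same underlying idea as the paper: both arguments rest on the observation that shifting the $j$-th coordinate by one moves the right-hand side by $M_j$ and the objective by $r_j$. The paper compresses this into a single change of variables (add the redundant constraint $x_j\geq 1$, then substitute $\vx\mapsto\vx+\ve_j$), whereas you spell out the two inequalities separately, but the content is identical.
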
 
\begin{proof} By assumption, the optimal value of $(P[\vd])$ remains unchanged if we add the inequality $x_j\geq 1$. 
Therefore we have $P[\vd]=\max\crl{\vr'(\vx+\ve_j):M(\vx+\ve_j)=\vd,\vx\geq 0}$.  
\end{proof}

\cref{lem:collect} lets us divide $P[\vd]$ in two summands: the immediate reward $r_j$ and the future reward $P[\vd-M_j]$; this has the flavor of dynamic programming we need for defining the Bellman loss.

\begin{example}[Bellman Loss For Baseline Setting]\label{ex:indicator}
For the baseline \oknap, discussed in \cref{sec:framework}, we chose the full information filtration $\calG_t=\calF_1$ for all $t$ so that $\varphi(t,b|\calG_t) \defeq \max_{\vx\geq 0}\crl{\vr'\vx_\accept: \vw'\vx_\accept\leq b, \vx_{\accept}+\vx_{\reject} = Z^t}$. 
We define the exclusion sets as 
\begin{align*}\calB(t,b) = \crl{\omega\in\Omega: \not\exists \vx \text{ solving } \varphi(t,b) \text{ s.t. } x(\accept,\xit)\geq 1 \text { or } x(\reject,\xit)\geq 1}.\end{align*}
By \cref{lem:collect}, outside the exclusion sets $\calB(t,b)$, monotonicity holds with zero Bellman Loss, i.e.,
\begin{align*}
\varphi(t,s|\calG_t) \leq \max_{\u\in\U}\crl{\Re(s,\xit,\u)+\E[\varphi(t-1,\Tr(s,\xit,\u)|\calG_{t-1})|\calG_t]} \quad\forall \omega\notin \calB(t,s).
\end{align*}
Moreover, for our choice of $\varphi$, since the optimal solution sorts items by $r_j/w_j$, we have that the maximum loss outside the exclusion set is bounded by $\rmax\leq \max_{j,i}\crl{w_ir_j/w_j-r_i}$, which depends only on the primitives.
Thus,~\cref{def:relaxed_bellman} is satisfied with Bellman Loss $\BL(t,b)=\rmax\Ins{\calB(t,b)}$ .
\end{example}

To generalize this, we need two definitions. First, we define the maximum Bellman loss as:
\begin{definition}[Maximum Loss] \label{def:loss}
	For a given relaxation $\varphi$, the maximum loss is given by
	\begin{align*}
	\rmax \defeq \max_{t,s,\u:\Re(s,\xit,\u)>-\infty}\crl{ \varphi(t,s|\calG_t) - (\Re(s,\xit,\u)+\E[\varphi(t-1,\Tr(s,\xit,\u)|\calG_{t-1})|\calG_t])}
	\end{align*}
\end{definition}
Next, note that the `optimal' action in the RHS of \cref{eq:relaxed_bellman} need not be unique, and indeed the inequality can be satisfied by multiple actions.
For given $\varphi$ and $\BL$, we define:
\begin{definition}[Satisfying actions]\label{def:satisfying}
	Given  a filtration $\calG$ and relaxed value $\varphi$, we say that $\u$ is {\em a satisfying action} for state $s$ at time $t$ if 
	\begin{equation}\label{eq:satisfying}
		\varphi(t,s|\calG_t) \leq \Re(s,\xit,\u)+\E[\varphi(t-1,\Tr(s,\xit,\u)|\calG_{t-1})|\calG_t] +\BL(t,s).
	\end{equation}
\end{definition}
At any time $t$ and state $s\in\S$, any action in $\argmax_{\u\in\U}\crl{\Re(s,\xit,\u)+\E[\varphi(t-1,\Tr(s,\xit,\u)|\calG_{t-1})|\calG_t]}$ is always a satisfying action (see monotonicity in \cref{def:relaxed_bellman}); moreover, to identify a satisfying action, we must know $\calG_t$. We now have the following proposition.
\begin{proposition}\label{prop:relaxed_bellman}
	Consider a relaxation $\varphi$ and Bellman loss $\BL$ that satisfy the Bellman inequalities w.r.t.\ filtration $\calG$.
	Let $(\St,t\in [T])$ denote the state trajectory under a policy that, at time $t$, takes \emph{any} satisfying action $\Ut=\Ut(\St|\Gt)$.
	Then,
	\begin{align*}
	\E\brk*{V(\T,S^\T|\mathcal{G}_T)} - \E\brk*{ \sum_{t=1}^\T \Re(\St,\xit,\Ut)}\leq \E\brk*{ \sum_{t=1}^\T\BL(t,\St|\Gt)}. 
	\end{align*}
\end{proposition}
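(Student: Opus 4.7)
The plan is a direct telescoping of the Bellman inequality along the state trajectory $(S^T, S^{T-1}, \ldots, S^0)$ generated by the satisfying-action policy.

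First I would apply \cref{def:satisfying} to the pair $(\St, \Ut)$: because $\Ut$ is a satisfying action at $(t, \St)$, and because $S^{t-1} = \Tr(\St, \xit, \Ut)$ along the trajectory, we have pointwise
\[
\varphi(t, \St \mid \calG_t) \;\leq\; \Re(\St, \xit, \Ut) + \E\bigl[\varphi(t-1, S^{t-1} \mid \calG_{t-1}) \,\big|\, \calG_t\bigr] + \BL(t, \St \mid \calG_t).
\]
Here the terms on the left and on the right are $\calG_t$-measurable since $\St$, $\Ut$, and $\xit$ are all $\calG_t$-measurable (the action $\Ut(\St \mid \Gt)$ is chosen measurably by assumption, and the transition/reward are deterministic given $(\St, \xit, \Ut)$).

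Next, I would take unconditional expectations of both sides and apply the tower property to the middle term, using $\calG_{t-1} \supseteq \calG_t$, to get
\[
\E\bigl[\varphi(t, \St \mid \calG_t)\bigr] \;-\; \E\bigl[\varphi(t-1, S^{t-1} \mid \calG_{t-1})\bigr] \;\leq\; \E\bigl[\Re(\St, \xit, \Ut)\bigr] + \E\bigl[\BL(t, \St \mid \calG_t)\bigr].
\]
Summing this inequality over $t = 1, \ldots, T$, the left-hand side telescopes to $\E[\varphi(T, S^T \mid \calG_T)] - \E[\varphi(0, S^0 \mid \calG_0)]$, and the terminal condition $\varphi(0, \cdot) = 0$ kills the second term. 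Finally, invoking the initial ordering $\E[V(T, S^T) \mid \calG_T] \leq \E[\varphi(T, S^T) \mid \calG_T]$ (and taking expectation) bounds the left-hand side from below by $\E[V(T, S^T \mid \calG_T)]$, which yields the desired inequality upon rearranging.

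There is no real obstacle here; the proof is a routine telescoping argument, and the entire content of the proposition is already packaged into the three conditions of \cref{def:relaxed_bellman} together with the definition of satisfying action. The only subtle point to flag is the measurability bookkeeping in the tower step: because the time index $t$ denotes time-to-go, $\calG_{t-1}$ is the finer sigma-algebra, so $\E[\,\cdot\mid \calG_t]$ genuinely integrates out the information that $\calG_{t-1}$ reveals beyond $\calG_t$, and this is exactly what lets the telescoping go through cleanly.
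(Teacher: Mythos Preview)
Your proof is correct and follows essentially the same telescoping argument as the paper. The only cosmetic difference is that the paper iterates the conditional inequality first (obtaining $\varphi(T,S^T\mid\calG_T)\leq\sum_t\E[\Re(\St,\xit,\Ut)+\BL(t,\St)\mid\calG_t]$) and then takes a single unconditional expectation at the end, whereas you take unconditional expectations at each step and then sum; both routes are equivalent.
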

\begin{proof}
	From the monotonicity condition in the Bellman inequalities (\cref{def:relaxed_bellman}), and the definition of a satisfying action (\cref{def:satisfying}), we have, for all time $t$, that 
	\begin{equation*}
		\varphi(t,\St|\calG_t)\leq \E[\Re(\St,\xit,\Ut)+\varphi(t-1,S^{t-1}|\calG_{t-1}) +  \BL(t,\St|\calG_t)|\calG_t].
	\end{equation*}
	Iterating the above inequality over $t$ we get $\varphi(T,S^T|\calG_T)\leq  \sum_{t=1}^\T \E[\Re(\St,\xit,\Ut)+\BL(t,\St|\calG_t)|\calG_t]$.
	Finally, by the initial ordering condition we have $\E[V(\T,S^\T)|\calG_T] \leq \varphi(\T,S^\T|\calG_\T)$.
\end{proof}

\cref{prop:relaxed_bellman} shows that a policy that always plays a satisfying action $\Ut$ \emph{approximates} the performance of \off up to an additive gap given by the \emph{total Bellman loss} $\E\brk*{\sum_{t=1}^T \BL(t,\St|\calG_t)}$. 
More importantly, it suggests that \onl should try to track \off by `guessing' and playing a satisfying action $\Ut$ in each period.
We next illustrate how \onl can generate such guesses. 

\subsection{From Relaxations to Online Policies} 
\label{sec:resolve}

Suppose we are given an augmented canonical filtration $\calG_t = \sigma(\crl{\xi^l:l\geq t}\cup G_\Theta)$, and assume that the relaxed value $\varphi$ can be represented as a function of the random variables $\crl{\xi^l:l\geq t} \cup G_\Theta$ as $ \varphi(t,s|\calG_t) = \varphi(t,s;f_t(\xi^T,\ldots,\xit,G_\Theta))$.
In particular, we henceforth focus on a special case where $\varphi$ is expressed as the solution of an optimization problem: 
\begin{align}
	\label{eq:generic_relaxation}
	\varphi(t,s;f_t(\xi^T,\ldots,\xit,G_\Theta))
	= \max_{\vx\in\R^{\U\times\Xi}} \crl{h_t(\vx;s,f_t(\xi^T,\ldots,\xit,G_\Theta )) : g_t(\vx;s,f_t(\xi^T,\ldots,\xit,G_\Theta))\leq 0}.
\end{align}
The decision variables give \emph{action summaries}: for given state $s$ and time $t$, $x_{\u,\xi}$ represents the number of times action $\u$ is taken for input $\xi$ in remaining periods.
We can also interpret $x_{\u,\xi}$ as a \emph{score} for action $u$ when input $\xi$ is presented.
Now to get a non-anticipative policy, a natural `projection' of $\varphi(t,s|\calG_t)$ on the filtration $\calF$ is given via the following optimization problem 
\begin{align}
	\label{eq:hatvarphi}
	\hat{\varphi}(t,s|\calF_t) & = \varphi(t,s;\E[f_t(\xi^T,\ldots,\xit,G_\Theta)|\calF_t]) 
	= \max_{\vy\in\R^{\U\times\Xi}} \crl{h_t(\vy;s,\E[f_t|\calF_t]) : g_t(\vy;s,\E[f_t|\calF_t])\leq 0}.
\end{align}
The solution of this optimization problem gives action summaries (or scores) $\vy$; the main idea of the RABBI algorithm is to play the action with the highest score.

\begin{algorithm}
	\floatname{algorithm}{RABBI}
	\renewcommand{\thealgorithm}{}
	\caption{(Re-solve and Act Based on Bellman Inequalities)}
	\label{alg:resolve}
	\begin{algorithmic}[1]
		\Require Access to functions $f_t$  such that $\varphi(t,s|\calG_t) = \varphi(t,s;f_t(\xi^T,\ldots,\xit,G_\Theta))$.
		\Ensure Sequence of decisions $\hUt$ for \onl.
		\State Set $S^\T$ as the given initial state
		\For{$t=\T,\ldots,1$}
		\State Compute $\hatphi(t,S^t)=\varphi(t,S^t;\E[f_t(\xi^T,\ldots,\xit,G_\Theta)|\calF_t])$ with associated scores $\vy=\crl{y_{\u,\xi}}_{u\in\U,\xi\in\Xi}$
		\State Given input $\xit$, choose the action $\hUt$ with the highest score $y_{\u,\xit}$
		\State Collect reward $\Re(\St,\xit,\hUt)$; update state $S^{t-1} \gets \Tr(\St,\xit,\hUt)$
		\EndFor
	\end{algorithmic}
\end{algorithm}

\begin{theorem}
	\label{theo:resolve}
	Let \off be defined by an augmented filtration $\calG_t$ as in \cref{def:canonical_filtration}.
	Assume the relaxation $\varphi(t,s)$ satisfies the Bellman Inequalities with loss $\BL$, and for all $(t,a)\in[T]\times\S$, let $\calQ(t,s)\subseteq \Omega$ denote the set of sample-paths where the action $\hUt$ taken by \rabbi is not a satisfying action.
	If $(\St,t\in[T])$ denotes the state trajectory under \rabbi, then
	\begin{align*}
	\E[\reg] \leq \E\brk*{\sum_t (\rmax\Ins{\calQ(t,\St)} +\Ins{\calQ(t,\St)^c}\BL(t,\St))}
	\leq \sum_t \left(\rmax\Pr[\calQ(t,\St)] + \E[\BL(t,\St)]\right).
	\end{align*}
\end{theorem}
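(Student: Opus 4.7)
The plan is to mimic the telescoping argument of Proposition~\ref{prop:relaxed_bellman} while inserting at each step a case split on whether the RABBI action $\hUt$ is a satisfying action in the sense of Definition~\ref{def:satisfying}. On the event $\calQ(t,\St)^c$, the sharper inequality \eqref{eq:satisfying} applies with per-step loss $\BL(t,\St)$; on $\calQ(t,\St)$, I fall back on the uniform bound $\rmax$ from Definition~\ref{def:loss}, which by construction dominates the one-step gap for \emph{every} feasible action $\u$. Combining the two cases pathwise yields the unified one-step inequality
\[
\varphi(t,\St|\calG_t) \leq \Re(\St,\xit,\hUt) + \E[\varphi(t-1,S^{t-1}|\calG_{t-1})|\calG_t] + \BL(t,\St)\Ins{\calQ(t,\St)^c} + \rmax\,\Ins{\calQ(t,\St)}.
\]

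Next, I would iterate this inequality backwards from $t=T$ down to $t=1$, unrolling the embedded $\varphi(t-1,\cdot|\calG_{t-1})$ term at each step. Because $\calG_t\subseteq\calG_{t-1}$ in the paper's convention, the nested conditional expectations collapse via the tower property $\E[\E[\,\cdot\,|\calG_{t-1}]|\calG_t] = \E[\,\cdot\,|\calG_t]$. Together with the terminal condition $\varphi(0,\cdot)=0$, this telescopes to
\[
\varphi(T,S^T|\calG_T) \leq \E\brk*{\sum_{t=1}^T \left(\Re(\St,\xit,\hUt) + \BL(t,\St)\Ins{\calQ(t,\St)^c} + \rmax\,\Ins{\calQ(t,\St)}\right)\,\Big|\,\calG_T}.
\]

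To close the loop, I would take total expectation and invoke the initial-ordering condition of Definition~\ref{def:relaxed_bellman}, yielding $\voff = \E[V(T,S^T|\calG_T)] \leq \E[\varphi(T,S^T|\calG_T)]$. Identifying the cumulative reward term with $\von_{\pi_R} = \E[\sum_t \Re(\St,\xit,\hUt)]$ by construction of RABBI, and rearranging, produces the first inequality in the statement. The second inequality is then immediate from $\BL(t,\St)\Ins{\calQ(t,\St)^c} \leq \BL(t,\St)$ (assuming the non-negative version of the Bellman loss) and $\E[\Ins{\calQ(t,\St)}] = \Pr[\calQ(t,\St)]$.

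The main obstacle I anticipate is the pointwise validity of the fallback bound on $\calQ(t,\St)$: one must verify that along the trajectory the RABBI action $\hUt$ is feasible in the sense $\Re(\St,\xit,\hUt) > -\infty$, so that $\hUt$ actually participates in the supremum defining $\rmax$. Provided RABBI's scoring/argmax rule is restricted to feasible actions (which is natural given that $\hatphi$ itself only assigns positive scores to feasible actions via the constraint $g_t \leq 0$ in \eqref{eq:hatvarphi}), the rest of the argument is routine bookkeeping: an indicator-based case split combined with the tower-collapse telescoping already used in Proposition~\ref{prop:relaxed_bellman}.
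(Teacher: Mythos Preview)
Your proposal is correct and follows essentially the same route as the paper: a per-step case split (satisfying vs.\ not) yielding the unified one-step bound, then the tower-collapse telescoping of Proposition~\ref{prop:relaxed_bellman} plus initial ordering. The paper dresses this in a ``compensated coupling'' narrative (tracking \off's state $\bar S^t$ and inductively forcing $\bar S^t=S^t$ by paying $\rmax$ when $\hUt$ is not satisfying), but the underlying inequalities are identical to yours; your feasibility caveat on $\hUt$ and the nonnegativity of $\BL$ for the second inequality are both points the paper leaves implicit.
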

\begin{remark}[Bellman and Information Loss]
	The bound in \cref{theo:resolve} has two distinct summands:
	The \emph{information loss} $\sum_t \Pr[\calQ(t,\St)]$ measures how often \rabbi takes a non-satisfying action due to randomness in sample paths;
	on the other hand, the \emph{Bellman loss} $\sum_t \E[\BL(t,\St)])$ quantifies violations of the Bellman equations made under the pseudo value-function $\varphi$.
\end{remark}


\paragraph{Compensated Coupling:} 
The proof of~\cref{theo:resolve} is based on the \emph{compensated coupling} approach introduced in~\citep{bayes_prophet}. 
The idea is to imagine`simulating' controllers \off and \onl with identical random inputs $(\xit:t\in[T])$, with \onl acting before \off. 
Moreover, suppose at some time $t$, both controllers are in the same state $s$. 
Recall that, for any given state $s$ at time $t$, an action $\u$ is satisfying if \off's value does not decrease when playing $\u$ (\cref{def:satisfying}).
If \onl chooses to play a satisfying action, then we can make \off play the same action, and consequently both move to the same state.
On the other hand, if \onl chooses an action that is not satisfying, then the two trajectories may separate; we can avoid this however by `compensating' \off so that it `agrees' to take the same action as \onl. 
In particular, its always sufficient to compensate \off by the \emph{maximum loss} $\rmax$ to ensure its reward does not decrease by following \onl.
As a consequence, \emph{the (compensated) \off and \onl take the same actions, and thus their trajectories are coupled}.

As an example, for \oknap with budget $B=2$, weights $w_j= 1\,\forall\,j$, and horizon $\T=5$, consider a sample-path $\omega\in\Omega$ with rewards  $(\xi^5,\xi^4,\xi^3,\xi^2,\xi^1)=(5,7,2,7,2)$.
The sample-path comprises of three different types, and the sequence of actions $(\reject,\accept,\reject,\accept,\reject)$ ((selecting the value $7$ items) is optimal for \off, with total reward of $14$. 
Suppose \onl, in period $t=5$ wants to accept the item with reward $\xi^5=5$; then, \off is ``willing'' to follow this action if given a compensation of $2$ (in addition to collected reward $5$). \off and \onl then start the next period $t=4$ in the same state with budget $1$, hence remain coupled.

\begin{proofof}{\cref{theo:resolve}}
	Denoting \off's state as $\bar\St$, we have via \cref{prop:relaxed_bellman} that $\forall\,t$:
	\begin{align*}
	\varphi(t,\bar\St|\calG_t)\leq \E[\Re(\bar\St,\xit,\Ut)+\varphi(t-1,\bar S^{t-1}|\calG_{t-1}) +  \BL(t,\bar\St)|\calG_t].
	\end{align*}
	
	Let us assume as the induction hypothesis that $\bar\St=\St$.
	This holds for $t=T$ by definition.
	At any time $t$ and state $S^t$, if $\hUt$ is not a satisfying action for \off, then we have from the definition of the maximum loss (\cref{def:loss}) that:
	\begin{align*}\rmax \geq 
	\varphi(t,\St|\calG_t)-\Re(\St,\xit,\hUt)+\E[\varphi(t-1,S^{t-1}|\calG_{t-1})|\calG_t])  \quad a.s..
	\end{align*}
	Now to make \off take action $\hUt$ so as to have the same subsequent state as $\onl$, it is sufficient to compensate \off with an additional reward of $\rmax$.
	Specifically, we have
	\begin{align*}
	\varphi(t,\St|\calG_t)\leq \E[\Re(\St,\xit,\hUt)+\varphi(t-1,S^{t-1}|\calG_{t-1}) +  \rmax\Ins{\calQ(t,\St)} +\Ins{\calQ(t,\St)^c}\BL(t,\St)|\calG_t].
	\end{align*}
	Finally, as in \cref{prop:relaxed_bellman}, we can iterate over $t$ to obtain
	\begin{align*}
	\E[\varphi(T,S^T|\calG_T)] \leq \E\brk*{ \sum_t\Re(\St,\xit,\hUt) +   \sum_t (\rmax\Ins{\calQ(t,\St)} +\Ins{\calQ(t,\St)^c}\BL(t,\St))}.
	\end{align*}
	The first sum on the right-hand side corresponds exactly to \onl's total reward using the \rabbi policy.
	By the initial ordering property, $\E[V(T,S^T)]\leq \E[\varphi(T,S^T)]$, and we get the result.
\end{proofof}

\section{Online Probing}
\label{sec:probing}

We now apply our framework to online probing. Here, each arrival type $j$ has an independent random reward $R_j\in\crl{r_{jk}: k\in[m]}$ drawn with probabilities $\{q_{jk}\}$; $\vr$ and $\vq$ are known.
We assume w.l.o.g that $r_{j1} <r_{j2}<\ldots<r_{jm}$ and $r_{jm}>0$. 
For ease of exposition, we assume that all arrivals have unit weights; our analysis however extends to general weights $w_j$. 
The controller may accept ($\accept$), reject ($\reject$) or probe ($\probe$) the arrival. Accepting type-$j$ item without probing results in expected reward of $\bar r_j\defeq \sum_{k\in [m]}r_{jk}q_{jk}$. 
Probing reveals the realized reward, after which it can be accepted or rejected.
The controller has a resource budget $\Bh\in \N$ and a probing budget $\Bp\in\N$. 
When an arrival is accepted (resp. probed), we reduce $\Bh$ (resp. $\Bp$) by one.

Formally, we view each time period $t\in\{T,T-1,\ldots,1\}$ as comprising of a mini dynamic program with two stages  $\crl{t,t-1/2}$, driven by external random inputs $\xit\in [n]$ and $\xi^{t-1/2}\in [n]\times[m]$.
In the first stage $t$, the controller observes the arriving request $\xit=j$, and chooses an action in $\crl{\accept,\probe,\reject}$; in the second stage $t-1/2$, the reward $r_{jk}$ (or ``sub-type" $\xi^{t-1/2}=(j,k)\in [n]\times [m]$) is drawn with probability $q_{jk}$, and the available actions are $\crl{\accept,\reject}$ if the first-stage action is $\probe$, and $\varnothing$ otherwise. 
We augment the state space with a variable $\diamond$ that captures the first stage decision (i.e., whether we accept/reject without probing or probe). 
The state space $\S$ of the controlled process is thus 
$
\S= \crl{(b_h,b_p,\diamond):b_h,b_p\in\N,\diamond\in \crl{\accept,\probe,\reject,\varnothing}},
$
where $b_h,b_p$ are the residual hiring and probing budgets. 
In first stage of each period, we set  $\diamond=\varnothing$, and only collect rewards in second stage in each period.
See~\cref{fig:probing_states} for an illustration.

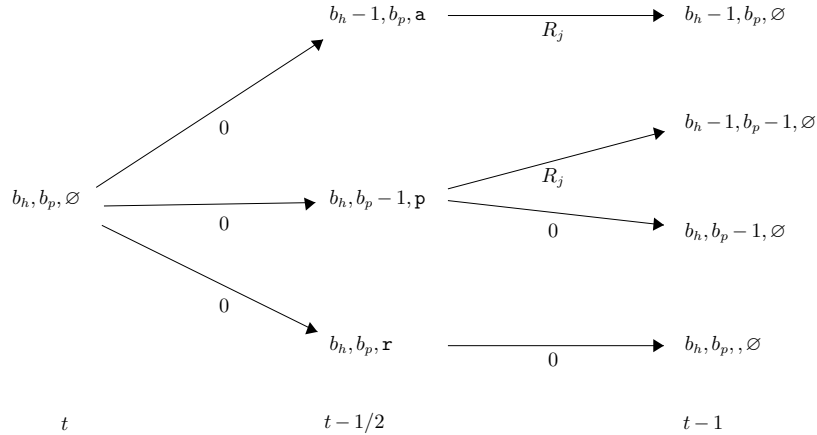
\begin{figure}[!ht]
	\centering
	\scalebox{0.7}{%
		\begin{tikzpicture}[scale=0.25]
\tikzstyle{every node}+=[inner sep=0pt]

\draw (4.5,-23) node {$b_h,b_p,\varnothing$};

\draw (5.9,-40) node {$t$};
\draw (27.9,-40) node {$t-1/2$};
\draw (54.4,-40) node {$t-1$};

\draw (30,-9) node[text width=2cm] {$b_h-1,b_p,\texttt{a}$};
\draw (30,-23) node[text width=2cm] {$b_h,b_p-1,\texttt{p}$};
\draw (30,-34.1) node[text width=2cm] {$b_h,b_p,\texttt{r}$};

\draw (57,-9) node[text width=2cm] {$b_h-1,b_p,\varnothing$};
\draw (58,-17.2) node[text width=2.5cm] {$b_h-1,b_p-1,\varnothing$};
\draw (57,-34.1) node[text width=2cm] {$b_h,b_p,,\varnothing$};
\draw (57,-25.5) node[text width=2cm] {$b_h,b_p-1,\varnothing$};

\draw [black] (8.28,-22.07) -- (25.52,-10.83);
\fill [black] (25.52,-10.83) -- (24.58,-10.92) -- (25.19,-11.71);
\draw (18,-17) node [below] {$0$};
\draw [black] (8.87,-23.48) -- (24.93,-23.22);
\fill [black] (24.93,-23.22) -- (24.07,-22.84) -- (24.21,-23.83);
\draw (18,-24.3) node [below] {$0$};
\draw [black] (8.71,-24.95) -- (25.09,-33.05);
\fill [black] (25.09,-33.05) -- (24.51,-32.3) -- (24.16,-33.24);
\draw (18,-30.52) node [below] {$0$};

\draw [black] (35,-9) -- (51.4,-9);
\fill [black] (51.4,-9) -- (50.6,-8.5) -- (50.6,-9.5);
\draw (43,-9.5) node [below] {$R_j$};
\draw [black] (35,-22.18) -- (51.46,-17.82);
\fill [black] (51.46,-17.82) -- (50.58,-17.5) -- (50.79,-18.47);
\draw (43,-20.58) node [below] {$R_j$};
\draw [black] (35,-23.07) -- (51.41,-24.93);
\fill [black] (51.41,-24.93) -- (50.66,-24.36) -- (50.57,-25.36);
\draw (43,-24.83) node [below] {$0$};
\draw [black] (35,-34.1) -- (51.4,-34.1);
\fill [black] (51.4,-34.1) -- (50.6,-33.6) -- (50.6,-34.6);
\draw (43,-34.6) node [below] {$0$};
\end{tikzpicture}
	}
	\caption{Actions/transitions in online probing in periods $t$, $t-1/2$, and $t-1$, with inputs $\xit=j$ and $\xi^{t-1/2}=R^j$. Numbers below the arrows represent the reward of a transition. 
		At $t$, available actions are $\crl{\accept,\probe,\reject}$ (i.e., accept, probe, reject; from top to bottom);
		at $t-1/2$, if we chose to probe in the first-stage (i.e., are in the middle state), then available actions are $\crl{\accept,\reject}$.
	}
	\label{fig:probing_states}
\end{figure}

\subsection{Offline Benchmark and Online Policy for Probing}
\label{ssec:probingbenchmark}

We now apply the \rabbi framework for online probing.

\paragraph{\off Benchmark:} 
We define \off to be the controller that knows the public types of {\em all} arrivals in advance (i.e., it knows $Z_j^{t}$, the number of type-$j$ items that will arrive in the last $t$ periods), but does not know the realization of the rewards (sub-types).
Formally, \off is endowed with the canonical filtration given by $\Theta = [T]$ and $G_\theta=\xi^\theta$ (see \cref{def:canonical_filtration}): with $t$ steps to go, \off has the information filtration $\calG_t=\sigma(\crl{\xi^{t}:t\in[\T]}\cup\crl{\xi^\tau:\tau\geq t})$.
Note that since \off does not know the actual rewards, it still needs to solve a dynamic program to decide whether or not to probe an arrival.

\paragraph{Relaxed Value Function:} 
Since solving for \off's optimal actions may be non-trivial, we next construct a relaxed value function $\varphi$, using the following LP parametrized by $(b_h,b_p,\vz)\in \N^2\times\Rp^n$, 
\begin{alignat}{2}
\label{eq:probing_lp}
(P[b_h,b_p,\vz]) \qquad & \text{maximize: } & & \sum_{j,k}r_{jk}x_{jk\accept}+ \sum_j \bar{r}_jx_{j\accept} \\
& \text{subject to: }& \quad & 
\begin{aligned}[t]
\sum_{j,k}x_{jk\accept}+\sum_jx_{j\accept} & \leq b_h \nonumber\\
\sum_{j}x_{j\probe} & \leq b_p \nonumber\\
x_{j\accept} +x_{j\probe} + x_{j\reject} & = z_j& \forall\,j & \in [n] \nonumber\\
x_{jk\accept} +x_{jk\reject} & = q_{jk}x_{j\probe} & \forall\,j & \in [n], k \in [m] \nonumber\\
\vx & \geq 0 \nonumber
\end{aligned}
\end{alignat}
Intuitively, $P[b_h,b_p,\vz]$ can be understood as follows: given current resource and probing budgets $\vb$ and future arrivals $\vz$, the decision variables $\vx\in\Rp^{3n+2nm}$ represent action summaries, where $x_{j\accept},x_{j\reject},x_{j\probe}$ are the total number of future type-$j$ arrivals that are accepted without probing, rejected without probing, and probed respectively, and $x_{jk\accept},x_{jk\reject}$ are the number of probed future type-$j$ arrivals that are revealed to have reward $r_{jk}$, and then accepted/rejected respectively. 
The first two constraints implement the resource budget and probing budget; the third ensures the number of type-$j$ items accepted, probed or rejected equals arrivals of that type. 
Finally, the last constraint guarantees that a $q_{jk}$ fraction of probed type-$j$ items have sub-type $k$ (i.e., reward $r_{jk}$).

To construct relaxed value $\varphi$, recall that a state is of the form $s=(b_h,b_p,\diamond)$ with $\diamond\in \crl{\accept,\probe,\reject,\varnothing}$. 
For period $t$ (i.e., first stage, $\diamond=\varnothing$), we define $\varphi(t,(b_h,b_p,\varnothing)|\calG_t) \defeq P[b_h,b_p,Z^t]$.
For $t-1/2$ (i.e., second stage decisions), we modify $\varphi$ to incorporate the action $(\accept,\probe,\reject)$ taken in the first stage.
Overall, our relaxation is defined as follows
\begin{equation}\label{eq:probing_relax}
\varphi(t-1/2,(b_h,b_p,\diamond)|\calG_t) = \left\{ 
\begin{array}{ll}
r_{\xi^{t-1/2}}+P[(b_h,b_p),Z^{t-1}] & \quad\diamond = \accept \\ 
\max\crl{r_{\xi^{t-1/2}}+P[(b_h-1,b_p),Z^{t-1}],P[(b_h,b_p),Z^{t-1}]} & \quad\diamond = \probe\\
P[(b_h,b_p),Z^{t-1}] &  \quad\diamond = \reject 
\end{array} 
\right.
\end{equation}

\paragraph{Value Function estimate and Online Policy:}
Finally we can use the relaxed value function $\phi$ in~\cref{eq:probing_relax} to construct an estimated value function $\hatphi$ by replacing $Z^t$ with $\E_{\xi^{t-1/2}}[Z^t]$. Using this, we get our online policy specified in~\cref{alg:probing}.

\begin{algorithm}
\caption{Probing \rabbi}
\label{alg:probing}
\begin{algorithmic}[1]
\Require Access to solutions of $(P[\vb,\vz])$
\Ensure Sequence of decisions for \onl.
\State Initialize budgets $(\Bh^{T},\Bp^{T})\gets (\Bh,\Bp)$
\For{period $t=T,\ldots,1$}	
    \State Compute $\Xt$, an optimal solution to $(P[\Bt,\E[Z^{t}]])$
	\State Observe the arrival, say it is of type $j$, then take action $\hUt\in\argmax_{u=\accept,\probe,\reject}\crl{\Xt_{j,u}}$.
	\State If $\hUt=\reject$ or $\hUt=\accept$: collect zero or random $R_{j}$, respectively.
	\State If $\hUt=\probe$: probe the arrival to observe $R_j=r_{jk}$, then take action $\argmax_{u=\accept,\reject}\crl{\Xt_{j,k, u}}$
	\State Update budgets $B^{t-1}$ accordingly.
\EndFor
\end{algorithmic}
\end{algorithm}
\begin{remark}[Probing cost] Our approach can also handle a setting where the controller has no probing budget, but instead incurs a penalty $c_j$ when probing a type-$j$ arrival. 
The only change to results and proofs is in the definition of $P[\vb,Z]$, where we drop the constraint involving the probing budget, and modify the objective to be $\max\crl{\sum_{j,k}r_{jk}x_{jk\accept}+ \sum_j \bar{r}_jx_{j\accept} -\sum_jc_jx_{j\probe}}$
\end{remark}

\subsection{Regret Analysis for Online Probing}
\label{ssec:probeoutline}

We now provide a brief outline of the proof of~\cref{theo:reg_probing}, which guarantees that~\cref{alg:probing} has a regret that is independent of $T,\Bh$ and $\Bp$.
Complete proofs are provided in~\cref{sec:probingproofs}.

The main part of the proof involves showing that $\varphi$ as defined in~\cref{eq:probing_relax} obeys the Bellman inequalities (\cref{def:relaxed_bellman}) with appropriately chosen Bellman loss. The first ingredient for this is provided by the following lemma, which establishes initial ordering for our relaxed value $\varphi$.
\begin{lemma}
\label{lem:probing_initial}
For any $b_h,b_p\in\N$, and arrivals $Z$, $\E[V(T,(b_h,b_p)|\calG_{T})] \leq \E[\varphi(\T,(b_h,b_p,\varnothing)|\calG_{T})]$. 
\end{lemma}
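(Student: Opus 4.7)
The plan is to construct, for any offline policy (in particular \off's optimal one), a feasible solution to the LP $P[\vb,Z]$ whose objective value matches the policy's expected reward conditional on $\calG_T$. Since \off's value $V(T,\vb|\calG_T)$ equals the supremum of such conditional expected rewards, this will imply $V(T,\vb|\calG_T) \leq P[\vb,Z] = \varphi(T,(\vb,\varnothing)|\calG_T)$ almost surely, and the result follows upon taking expectations.

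Concretely, fix a realization of arrivals $Z$. Index by $i$ the $i$-th arrival of type $j$, and let the random variables $U_{j,i}\in\{\accept,\probe,\reject\}$ record \off's first-stage decision, $K_{j,i}$ the revealed sub-type (when probed), and $U'_{j,i}\in\{\accept,\reject\}$ the second-stage decision. Define the random action summaries
\begin{align*}
X_{j,u} &= \sum_i \mathds{1}\{U_{j,i}=u\}, \quad u\in\{\accept,\probe,\reject\},\\
X_{jk,u} &= \sum_i \mathds{1}\{U_{j,i}=\probe,\,K_{j,i}=k,\,U'_{j,i}=u\},\quad u\in\{\accept,\reject\}.
\end{align*}
Almost surely these satisfy the capacity, probing-budget, and demand constraints $\sum_{j,k}X_{jk,\accept}+\sum_j X_{j,\accept}\leq b_h$, $\sum_j X_{j,\probe}\leq b_p$, and $X_{j,\accept}+X_{j,\probe}+X_{j,\reject}=Z_j$, so their $\calG_T$-conditional expectations $x_{j,u} \defeq \E[X_{j,u}|\calG_T]$ and $x_{jk,u} \defeq \E[X_{jk,u}|\calG_T]$ inherit all three by linearity.

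The key subtlety, and the main obstacle, is the ``subtype-consistency'' constraint $x_{jk,\accept}+x_{jk,\reject}=q_{jk}x_{j,\probe}$. This is not a pathwise equality, because the sub-type realizations are random. However, since \off must commit to probing the $i$-th type-$j$ arrival \emph{before} observing $K_{j,i}$, the event $\{U_{j,i}=\probe\}$ depends only on $\calG_T$ and sub-types drawn strictly earlier; since the sub-type $K_{j,i}$ is drawn from $\vq$ independently of everything prior, we obtain by the tower property
\[
\E[\mathds{1}\{U_{j,i}=\probe,K_{j,i}=k\}|\calG_T] = q_{jk}\,\E[\mathds{1}\{U_{j,i}=\probe\}|\calG_T],
\]
and summing over $i$ (with $U'_{j,i}\in\{\accept,\reject\}$ accounting for every probed item) yields the required identity.

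Finally, the conditional expected reward of \off decomposes into rewards from accept-without-probing arrivals and from accept-after-probing arrivals. For the former, the unobserved reward $R_{j,i}$ is independent of the decision $U_{j,i}=\accept$, so contributes $\bar r_j\,x_{j,\accept}$ per type; for the latter, conditioning on $K_{j,i}=k$ yields a contribution $r_{jk}\,x_{jk,\accept}$. Summing, \off's conditional expected reward equals $\sum_j \bar r_j x_{j,\accept}+\sum_{j,k}r_{jk}x_{jk,\accept}$, which is exactly the objective of $P[\vb,Z]$ evaluated at the feasible $(x_{j,u},x_{jk,u})$ above. Thus $\E[\text{reward}|\calG_T]\leq P[\vb,Z]$ a.s.; taking the supremum over policies and then expectations establishes the lemma.
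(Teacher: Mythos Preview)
Your proposal is correct and follows essentially the same approach as the paper: define random action summaries for an arbitrary $\calG$-adapted offline policy, take conditional expectations given $\calG_T$, verify these satisfy all constraints of $P[\vb,Z]$ (with the subtype-consistency constraint handled via the independence of the freshly drawn sub-type from the probing decision), and match the LP objective to the policy's conditional expected reward. Your treatment of the subtype constraint at the individual-arrival level is, if anything, a bit more explicit than the paper's, which works directly with the aggregated counts.
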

This follows from a standard argument, where we argue that any offline policy induces action summaries that satisfy the constraints defining $\varphi$. 
The proof is provided in~\cref{sec:probingproofs}. 

The bulk of the work is in establishing monotonicity, which we do via the following lemma. Recall the definitions of exclusion sets, satisfying actions and maximum loss (\cref{def:exclusion,def:satisfying,def:loss}). 
\begin{lemma}
\label{lem:probing_ineq}
Let $\bar X$ be a maximizer of $(P[(b_h,b_p),Z^{t}])$ for some period $t$, and suppose $\xit=i$. Then we have the following implications for satisfying actions
\begin{itemize}
\item[\em (1)] If $\bar X_{i\accept}\geq 1$, then accepting at time $t$ is a satisfying action.
\item[\em (2)] If $\bar X_{i\reject}\geq 1$, then rejecting at time $t$ is a satisfying action.
\item[\em (3)] If $\bar X_{i\probe}\geq 1$, and $\xi^{t-1/2}=(i,k)$ is such that either $\bar X_{ik\accept}\geq 1$ or $\bar X_{ik\reject}\geq 1$, then probing at time $t$, followed by accepting (if $\bar X_{ik\accept}\geq 1$) or rejecting (if $\bar X_{ik\reject}\geq 1$) at time $t-1/2$ is a satisfying action.
\end{itemize}

Finally $\varphi$ satisfies the Bellman Inequalities with Bellman Loss $\BL(t,(b_h,b_p))=\rmax\Ins{\calB(t,b_h,b_p)}$, where $\calB$ are exclusion sets defined as: 
\begin{align*}
\calB(t,b_h,b_p) = \crl{\omega\in\Omega: \not\exists \bar X \text{ solution to } (P[(b_h,b_p),Z^{t}]) \text{ s.t. } (1) \textbf{ or } (2) \textbf{ or } (3) \text{ hold} }.
\end{align*}
\end{lemma}
The proof generalizes the argument in~\cref{ex:indicator} for \oknap. 
We provide a brief outline here, and defer the details to~\cref{sec:probingproofs}. 
First, observe that the monotonicity condition in~\cref{def:relaxed_bellman} translates to the following condition in the online probing setting.
\begin{align*}
\varphi(t,(b_h,b_p,\varnothing)|\calG_t) \leq \max_{\diamond\in\crl{\accept,\probe,\reject}}\crl{\E_{\xi^{t-1/2}}[\varphi(t-1/2,(s_{\diamond},\diamond)|\calG_{t-1/2})|\calG_t]} \quad \forall \omega\notin\calB(t,b_h,b_p).
\end{align*}
where the state $s_{\diamond}=(b_h-1,b_p)$ if $\diamond=\accept$, $s_{\diamond}=(b_h,b_p-1)$ if $\diamond=\probe$ and $s_{\diamond}=(b_h,b_p)$ if $\diamond=\reject$. 
Moreover, given $\xit=i$, we have from~\cref{eq:probing_relax} that $\E_{\xi^{t-1/2}}[\varphi(t-1/2,(s_{\diamond},\diamond))|\calG_{t-1/2})|\calG_t]=P[(b_h,b_p),Z^{t-1}] $ if $\diamond = \reject$, and $r_{\xi^{t-1/2}}+P[(b_h-1,b_p),Z^{t-1}] $ if $\diamond = \accept$. 
Now for cases (1) and (2), the claim in the lemma follows directly by invoking~\cref{lem:collect}.
Finally, case (3) (where $\bar X_{i\probe}\geq 1$) also follows from using~\cref{lem:collect}, but in a somewhat more technical way; see~\cref{sec:probingproofs} for details. 

Using~\cref{lem:probing_initial,lem:probing_ineq}, we can complete the regret analysis for~\cref{alg:probing}.

\begin{proofof}{\cref{theo:reg_probing}}
By \cref{theo:resolve}, we have that $\reg \leq \rmax\sum_t (\Ins{\calB(t,\St)} + \Ins{\calQ(t,\St)})$.
To bound this, we proceed in two steps: bounding the measure of the exclusion sets $\calB$, and the ``disagreement'' sets $\calQ$. We conclude using the fact that $\rmax\leq\max_{j,k}r_{jk}$.
	
To bound the measure of the exclusion sets $\calB$, let $\bar X$ be the solution to $(P[\vb,Z^{t}])$, and note that \cref{lem:probing_ineq} guarantees that there is zero Bellman Loss if (1)
$\max\crl{\bar X_{j\accept},\bar X_{j\reject}}\geq 1$, or (2) $\bar X_{j\probe}\geq 1$ and $\max\crl{\bar X_{jk\accept},\bar X_{jk\reject}}\geq 1$.
The exclusion set $\calB(t,\vb)$ comprises sample paths where both (1) and (2) fail.
	
Note that any feasible solution to $(P[\vb,Z^{t}])$ satisfies $x_{j\accept}+x_{j\probe}+x_{j\reject}=Z_j^{t}$ $\forall j$ and $x_{jk\accept}+x_{jk\reject}=q_{jk}x_{j\probe}$ $\forall j,k$.
If $Z_j^{t}\geq 3$, then one of the variables $x_{j\accept},x_{j\probe},x_{j\reject}$ must be at least $1$.
On the other hand, we need $q_{jk}x_{j\probe}\geq 2$ to guarantee that one of $x_{jk\accept},x_{jk\reject}$ is at least $1$.
Thus we have 
\begin{equation} 
\label{eq:interim}
\Pr[\calB(t,b)|\xi^{t-1/2}=(j,k)] \leq \Pr\brk*{Z_j^{t} < \frac{6}{q_{jk}}}
=\Pr\brk*{Z_j^{t} -\mu_j(t) < -\mu_j(t)\prn*{1-\frac{6}{\mu_j(t)q_{jk}}}}.
\end{equation} 
Restricting $\mu_j(t)\geq 12/q_{jk}$ to ensure the RHS of \cref{eq:interim} is positive, we can use a standard Chernoff bound (see \citep{concentration_book}) to get
$\Pr[\calB(t,b)|\xi^{t-1/2}=(j,k)]  \leq e^{-2(p_j/2)t}+\In{t\leq 12/(p_jq_{jk})}
$. 
Finally, 
\begin{align*}
\sum_{t}\Pr[\calB(t,B^t)]\leq \sum_t\sum_jp_je^{-2(p_j/2)t} + \sum_t\sum_{j,k}p_jq_{jk}\In{t\leq 12/(p_jq_{jk})}\leq \sum_j\frac{2}{p_j}+12.
\end{align*}

To bound the Information Loss $\sum_t\Pr[\calQ(t,\St)]$, recall $\calQ(t,\St)\subseteq\Omega$ is the event where $\hUt$ is not satisfying.
Let $\bar X$ be a solution to $(P[\vb,Z^{t}])$, $t$ a first stage, and let $j=\xit$.
We now have two cases depending on if $\hUt\in\crl{\accept,\reject}$ or $\hUt=\probe$.
First, if $\hUt\in\crl{\accept,\reject}$, then according to \cref{lem:probing_ineq}, accepting or rejecting is satisfying whenever $\max\{\bar X_{j\accept},\bar X_{j\reject}\}\geq 1$.
Since $\Xt(\xit,\hUt)=\max\crl{\Xt(\xit,u):u=\accept,\probe,\reject}$ and $\Xt_{j\accept}+\Xt_{j\probe}+\Xt_{j\reject}=\mu_j(t)$, we have
\begin{align*}
\Pr[\bar X(j,\hUt)<1|\Xt(j,\hUt)\geq \mu_j(t)/3]
	\leq \Pr[\norm{\bar X-\Xt}_\infty \geq \mu_j(t)/3 ].
\end{align*}
On the other hand, if $\hUt=\probe$, the error is bounded by
\begin{align*}
\Pr\brk*{\bar X_{j\probe}<1 \text{ or } \bar X_{\xi^{t-1/2},u}<1 \Big|\Xt_{j\probe}\geq \frac{\mu_j(t)}{3}, \Xt_{\xi^{t-1/2},u}\geq \frac{q_{\xi^{t-1/2}}\mu_j(t)}{6} }
\leq \Pr\brk*{\norm{\bar X-\Xt}_\infty \geq \frac{q_{\xi^{t-1/2}}\mu_j(t)}{6}},
\end{align*}
where $u$ is the action with largest value between the variables $\Xt(\xi^{t-1/2},\accept),\Xt(\xi^{t-1/2},\reject)$.
	
Thus, regardless of the action $\hUt$, the probability of choosing a non-satisfying action is bounded by $\Pr[\norm{\bar X-\Xt}_\infty \geq  \min_kq_{jk}\cdot \mu_j(t)/6 ]$. 
Moreover, standard LP sensitivity results~\citep[Theorem 2.4]{mangasarian} imply that there exists $\kappa$ depending on $\vq,n,m$ alone, s.t.\ $\norm{\bar X-\Xt}_\infty\leq\kappa \norm{Z^{t}-\mu(t)}_1$. 
Finally, the measure of sets $\calQ$ where \onl chooses a non-satisfying action is bounded by
\begin{equation*}
\sum_t\Pr[\calQ(t,\St)] \leq \sum_t \Pr[\norm{Z^{t}-\mu(t)}_1\geq \min_kq_{jk}\cdot \mu_j(t)/6\kappa ]<\infty.
\end{equation*}
The summability follows arguments presented in~\citep{bayes_prophet}, based on standard concentration bounds.
\end{proofof}

\section{Dynamic Pricing}
\label{sec:pricing}

We now apply our framework to dynamic pricing. In the basic setting, we have $d$ resources and $n$ customer types.
Each customer type has a private reward for a set of resources.
The controller observes the customer type, and if the corresponding set of resources is available, posts a price. 
The customer then purchases iff the requested set is available and the posted price below the private reward.
The resource consumption is encoded in a matrix $A\in\crl{0,1}^{d\times n}$.
In~\cref{ssec:consumerchoice}, we generalize to settings where rather than requesting a specific set of products, customers make a choice between multiple substitute bundles of resources.

We consider the following formal model: at time $t$, type $j\in[n]$ arrives with probability $p_j$, is seen by the controller, who then posts a price $\f_{jl}$ from a set of available prices $\crl{\f_{j1},\ldots,\f_{jm}}$.
The customer then draws a private reward $R^t\sim F_j$, and a purchase occurs iff $R^t>\f_{jl}$.
If the customer buys, $\f_{jl}$ is collected and the inventory decreases by $A_j$.
On the other hand, if the customer does not buy, the controller collects zero and the inventory remains unchanged.

\subsection{Offline Benchmark and Online Policy for Dynamic Pricing}

\paragraph{\off Benchmark:} 
Note that for each customer type $j$, there are $Z_j^{T}$ arrivals, and hence $Z_j^{T}$ draws from the distribution $F_j$.
We now define our benchmark by considering \off to be a controller that knows the realized histogram of these draws, i.e., for each $j$, \off knows the empirical distribution of the $Z_j^{T}$ rewards.
Moreover, at the end of each period $t$, \off also observes the realized valuation $R^t$ whether or not there is a sale. 
Note that \off does not know the exact sequence of these rewards, and so is not a full information benchmark.
For example, say $Z_1^T=15$ and we reveal that $10$ arrivals type-1 have private reward $\$1$ and $5$ arrivals type-1 have private reward $\$ 2$; now, upon observing a type-1 arrival, \off concludes that the reward is $\$ 2$ with probability $\frac{5}{15}$.
Now if the arrival had value $\$1$, then, the next time \off observes a type-1, its belief is that the reward is $\$2$ with probability $\frac{5}{14}$.

Formally for each $j$ suppose the prices are ordered $\f_{j1}>\f_{j2}>\ldots>\f_{jm}$.
Denote $\xit\in[n]$ to be the type of the arrival at time $t$.
To define \off, we introduce a sequence of independent random vectors $\crl{Y^t:t=T,T-1,\ldots,1}$ where $Y^t_{jl}\defeq\In{\xit=j,R^t> \f_{jl}}$; in other words, $Y^t_{jl}$ is the indicator of  whether a price $f_{jl}$ or lower is accepted by the type-$j$ at time $t$.
We define $Q_{jl}(t)\defeq \frac{1}{Z_j^{t}}\sum_{\tau=1}^tY_{jl}^\tau$ to be the fraction of type-$j$ customers who accept price $\f_{jl}$ in the last $t$ periods.
Observe that $Q_{jl}(t)$ is a martingale with $\E[Q_{jl}(t)] = \bar F_j(\f_{jl})$ and $Q_{jl}(t) = \frac{Z_j^{t+1}}{Z_j^{t}} Q_{jl}(t+1) - \frac{1}{Z_j^{t}}Y^{t+1}_{jl}$.

\off's information is now given by the filtration $\calG_t=\sigma(\crl{Q(\tau),Z^\tau:\tau\geq t})$, i.e., at every time $t$, \off knows the total demand $Z_j^{t}$ and the empirical averages $Q_{jl}(t)$, but not the sequence of rewards.
This coincides with the canonical filtration (\cref{def:canonical_filtration}) with variables $(Q_{jl}(T),Z_j^{T}:j\in[n],l\in [m])$.
The filtration $\calG$ is strictly coarser than the full information filtration, which would correspond to revealing all the variables $Y^T,Y^{T-1},\ldots,Y^1$ instead of their empirical averages.

\paragraph{Relaxed Value Function:} 
Consider the following LP, parameterized by $(\vb,\vq,\vz)$.
\begin{alignat}{2}
\label{eq:pricing_lp}
(P[\vb,\vq,\vz])\qquad & \text{maximize: } & & \sum_{j,l}\f_{jl}q_{jl}x_{jl} \\
& \text{subject to: }& \quad & 
\begin{aligned}[t]
\sum_{j,l}a_{ij}q_{jl}x_{jl} & \leq b_i& \forall\,i & \in[d] \nonumber\\
\sum_{j,l} x_{jl} + x_{jr} & = z_j& \forall\,j & \in [n] \nonumber\\
\vx & \geq 0 \nonumber
\end{aligned}
\end{alignat}
We define the relaxed value as $\varphi(t,\vb|\calG_t)\defeq P[\vb,Q(t),Z^{t}]$, and the corresponding estimated value as $\hatphi(t,\vb)\defeq P[\vb,\vq,t\vp]$, where $q_{jl} = \bar F_j(\f_{jl})$. 
The resulting \rabbi policy is presented in \cref{alg:pricing}.

\begin{algorithm}
\caption{Pricing \rabbi}
\label{alg:pricing}
\begin{algorithmic}[1]
\Require Access to solutions of $(P[\vb,\vq,\vz])$
\Ensure Sequence of decisions for \onl.
\State Set $B^{\T}\gets B$ as the given initial budget and $q_{jl}\gets \bar F_j(\f_{jl})$
\For{$t=\T,...,1$}
    \State If the arrival is type $j$ and $A_j \not \leq B^t$: not enough resources, reject and go to $t-1$
	\State Compute $\Xt$, an optimal solution to $(P[B^t,\vq,t\vp])$
	\State Let $l\in\argmax\crl{\Xt_{j,l}:l=1,\ldots,m,\reject}$. If $l=\reject$, reject and go to $t-1$. Else post price $\f_{jl}$ 
	\State If $R^t > \f_{jl}$, collect $\f_{jl}$ and $B^{t-1}\gets B^t-A_j$; else $B^{t-1}\gets B^t$
\EndFor
\end{algorithmic}
\end{algorithm}

To get some intuition into the LP $(P[\vb,\vq,\vz])$, note that if $q_{jl} = \bar F_j(\f_{jl})$, i.e., the probability that price $\f_{jl}$ is accepted by a type-$j$ customer and $z_j$ is the number of type-$j$ arrivals, then $(P[\vb,\vq,\vz])$ can be interpreted as follows: the variable $x_{jl}$ represents the number of times that price $\f_{jl}$ is offered, with $\sum_{j,l}\f_{jl}q_{jl}x_{jl}$ the expected reward from the corresponding arrivals.
Each time price $\f_{jl}$ is offered, $a_{ij}q_{jl}$ units of resource $i$ are consumed in expectation, and hence $\sum_{j,l}a_{ij}q_{jl}x_{jl}$ is the total expected consumption of resource $i$.
Finally, at most one price is offered per arrival, which is captured by $\sum_{l}x_{jl}+x_{j\reject} =z_j$, where $x_{j\reject}$ is the number of rejected type-$j$ customers.

\subsection{Bellman Inequalities and Bellman Loss}

We first argue that our choice of $\varphi$ satisfies the Bellman Inequalities.
\begin{lemma}
\label{lem:pricing_ineqs}
Let $V(T,B|\calG_T)$ be the value of \off's optimal policy, and $\varphi(t,\vb|\calG_t)= P[\vb,Q(t),Z^{t}]$ be the relaxed value with optimal solution $X$.
\begin{enumerate}
    \item $\E[V(T,B|\calG_T)] \leq \E[\varphi(T,B|\calG_T)]$, hence $\varphi$ satisfies the initial ordering condition.
    \item If the arriving type is $j$ and $\max_l\crl{X_{jl}}\geq 1$, then $\E[\BL(t,\vb)]\leq 0 $.
	\item If the arriving type is $j$ and $X_{jl}\geq 1$, then posting $\f_{jl}$ is a satisfying action.
\end{enumerate}
\end{lemma}
We omit the proof of the initial ordering in item (1), as it is similar to that of \cref{lem:probing_initial}. Below we present the main ingredients for obtaining the monotonicity property (items (2) and (3)); complete details are deferred to~\cref{sec:pricingproofs}.
For ease of exposition, when the controller rejects, he can equivalently post $f_{j\reject}=\infty$ such that $\bar F_j(f_{j\reject})=0$ with the convention $0\times\infty=0$.

We start by recalling the monotonicity condition (\cref{def:relaxed_bellman}).
Denote $\E_t[\cdot]=\E[\cdot |\calG_t]$.
If the inventory is $\vb\geq A_j$, the random reward of posting price $\f_{jl}$ at $t$ is $\f_{jl}Y^t_{jl}$ and the random new inventory is $\vb-A_jY^t_{jl}$, thus monotonicity corresponds to:
\begin{align*}
\varphi(t+1,\vb) \leq \max_{l\in[m]\cup\crl{\reject}}\crl{\E_{t+1}[\f_{jl}Y^{t+1}_{jl}+\varphi(t,\vb-A_jY^{t+1}_{jl})]} + \E_{t+1}[\BL(t+1,\vb)].    
\end{align*}
Because $Q$ is a martingale, we have $\E_{t}[Y^t] = Q(t)$ and we can further simplify the condition to
\begin{equation}\label{eq:mon_pricing}
\varphi(t+1,\vb) \leq \max_{l\in[m]\cup\crl{\reject}}\crl{ \f_{jl}Q_{jl}(t+1) + \E_{t+1}[\varphi(t,\vb-A_jY^{t+1}_{jl})] }+ \E_{t+1}[\BL(t+1,b)].
\end{equation}

Define
$
\BL(t+1,b,j,l) \defeq \varphi(t+1,\vb) - \f_{jl}Q_{jl}(t+1) - \E_{t+1}[\varphi(t,\vb-A_jY^{t+1}_{jl})], 
$
which corresponds to the loss in \cref{eq:mon_pricing} when we assume a specific price $\f_{jl}$ is posted. Recall we define $\varphi(t+1,\vb)=P[\vb,Q(t+1),Z^{t+1}]$. 
Moreover, for an arrival of type $j$ and any solution $X$ of $P[\vb,Q(t+1),Z^{t+1}]$, if $X_{jl}\geq 1$, then using \cref{lem:collect}, we have $P[\vb,Q(t+1),Z^{t+1}] = \f_{jl}Q_{jl}(t+1)+P[\vb-A_jQ_{jl}(t+1),Q(t+1),Z^{t}]$. 
Thus, assuming $X_{jl}\geq 1$, we can write the loss in the Bellman inequality as
\begin{equation}
    \BL(t+1,\vb,j,l) = P[\vb-A_jQ_{jl}(t+1),Q(t+1),Z^{t}] - \E_{t+1}[ P[\vb-A_jY^{t+1}_{jl},Q(t),Z^{t}]]
    \label{eq:losspricing}
\end{equation}

Observe that $\BL(t,\vb,j,l)$ is characterized by a random LP that depends on $Y^{t+1}$ (which is unknown at time $t+1$), see \cref{eq:losspricing}. 
To complete item (2) of~\cref{lem:pricing_ineqs}, it remains to prove that $\BL(t,\vb,j,l)$ characterized in \eqref{eq:losspricing} satisfies $\E_t[\BL(t,\vb,j,l)]\leq 0$. 
This is proved in \cref{sec:pricingproofs} by arguing that the term in \eqref{eq:losspricing} is upper bounded by a zero-mean random variable. 

We can then conclude that, for each $l$ with $X_{jl}\geq 1$, $\E_{t+1}[\BL(t+1,b,j,l)]\leq 0$ so that
$ \varphi(t+1,\vb) \leq \E_{t+1}[\f_{jl}Q_{jl}(t+1) + \varphi(t,\vb-A_jY^{t+1}_{jl})], 
$ implying that posting price $\f_{jl}$ is a satisfying action, which is item (3) of~\cref{lem:pricing_ineqs}.

\subsection{Information Loss and Overall Performance Guarantee}

Next we study the disagreement sets $\calQ(t,\Bt)$, and bound the information loss $\sum_t\Pr[\calQ(t,\Bt)]$.

\begin{proposition}
\label{lem:information_loss_pricing}
Let $X$ be a solution of $(P[\vb,Q(t),Z^{t}])$.
If $X_{jl}\geq 1$, then posting $\f_{jl}$ is a satisfying action.
Furthermore, the information loss is bounded by $\Pr[\calQ(t,\Bt)]\leq 1/t^2$ for all $t\geq c$, where $c$ depends only on $(\vf,\vp,A,F_1,\ldots,F_n)$.
\end{proposition}

\begin{figure}
    \centering
    \includegraphics[width=0.55\textwidth]{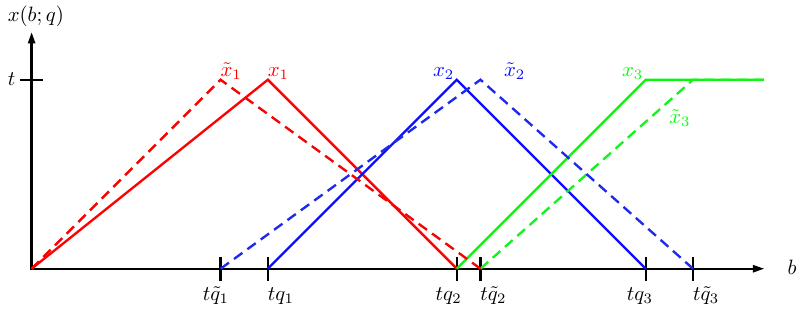}
    \caption{Solution to the pricing LP in \cref{eq:pricing_lp} for the case $d=1$ and $n=1$, which correspond to selling multiple copies of an item to homogeneous customers.
    If $b/t\in (q_l,q_{l+1}]$, the prices used by the LP are $\f_l,\f_{l+1}$ and the amount of time we offer each is piece-wise linear in the budget.
    For a perturbation $\tilde \vq$ of $\vq$, we superpose the solutions with the different parameters.
    Our `guess' is incorrect only when $\tilde x_l\gg1$ and $x_l<1$, which necessitates a substantial perturbation of $\vq$.
    }
    \label{fig:pricing_sol}
\end{figure}

We now give an outline of this proof; for details, refer~\cref{sec:pricingproofs}.
Recall that \rabbi chooses $l$ as the maximum entry of the solution to $(P[\vb,\E[Q(t)],\E[Z^{t}])$, which is a perturbed version of the object of interest, thus \onl needs to guess $l$ such that $X_{jl}\geq 1$ without the knowledge of $Q(t)$ and $Z^{t}$, creating an information loss.

To build intuition, consider the case where $d=1$ and $n=1$, i.e., selling multiple copies of an item to homogeneous customers; since there is only one type, we drop the index $j$.
Recall $\f_1>\ldots>\f_m$ and $q_1<\ldots<q_m$. 
It is easy to check that the solution of $P[b,\vq,t]$ is as follows:
(i) If $b \leq tq_1$, then $x = (b/q_1,0,\ldots,0)$;
(ii) If $b > tq_m$, then $x=(0,\ldots,0,t)$; 
(iii) Otherwise, if $b \in (tq_l,tq_{l+1}]$, then $x_{l'}=0$ for $l'\neq l,l+1$, and $x_l = (tq_{l+1}-b)(q_{l+1}-q_l), x_{l+1} = (b-tq_l)(q_{l+1}-q_l)$.
\Cref{fig:pricing_sol} illustrates this solution, and also shows that for \rabbi's guess to be incorrect, $Q(t)$ and $\E[Q(t)]$ must deviate considerably; the next lemma indicates is unlikely.
This intuition carries over to higher dimensions.

\begin{lemma}\label{lem:dkw}
For any $j\in[n]$, there is a constant $c_j$ depending on $p_j$ only such that, for any time $t$, $\Pr\brk*{\max_l\abs{Q_{jl}(t)-\E[Q_{jl}(t)] }>\sqrt{\frac{\log(t)}{t}}}\leq \frac{c_j}{t^2}$ .
\end{lemma}
\begin{proof}
From the DKW inequality~\citep{dkw} for empirical measures, we have
$$\Pr\brk*{\sup_l\abs{Q_{jl}(t)-\bar F(\f_{jl})}>\lambda\Big| Z^{t}}\leq 2e^{-2\lambda^2Z_j^{t}}.$$ 
Also for $Z_j^{t}\sim \Bin(t,p_j)$, $\E[e^{-\theta Z_j^t}]=(1-p+pe^{-\theta})^t$.
Setting $\lambda =\sqrt{\log(t)/t}$, we get
\begin{align*}
\Pr\brk*{\sup_l\abs{Q_{jl}(t)-\bar F(\f_{jl})}>\sqrt{\frac{\log(t)}{t}}}\leq 2(1-p_j+p_je^{-\theta})^t \quad \text{ where } \theta =2\log(t)/t.
\end{align*} 
Using the inequality $e^{-\theta}\leq 1-\theta+\theta^2/2$, an algebraic check confirms the desired inequality.
\end{proof}

\paragraph{Stability of Left-Hand Side Perturbations.} 
As stated in \cref{alg:pricing}, \onl takes actions based on $P[\vb,\E[Q(t)],\E[Z^{t}]]$, while \off uses $P[\vb,Q(t),Z^{t}]$.
Therefore, for fixed $(t,\vb)$, we need to compare solutions of $P[\vb,\vq,\vz]$ to those of $P[\vb,\vq+\Delta\vq,\vz+\Delta\vz]$, where $\Delta$ is the perturbation.
Define $\vq = \E[Q(t)]$, $\vz=\E[Z^{t}]$, $\Delta\vq = Q(t)-\E[Q(t)]$, and $\Delta\vz=Z^{t}-\E[Z^{t}]$.

\begin{lemma}[Selection Program]\label{lem:sel}
Let $V_t = P[\vb,\vq +\Delta\vq,\vz+\Delta\vz]$ and fix a component $(j',l')$.
Then posting price $\f_{j'l'}$ is satisfying if $\SP[V_t,\vq +\Delta\vq,\vz+\Delta\vz]\geq 1$, where \begin{align*}
\SP[V_t,\vq +\Delta\vq,\vz+\Delta\vz]:=   \max\crl*{x_{j'l'}: \sum_{j,l}f_{jl}(q_{jl}+\Delta q_{jl})x_{jl} \geq V_t, \vx \text{ feasible for } P[\vb,\vq +\Delta\vq,\vz+\Delta\vz] }.
\end{align*}
In other words, $\calQ(t,b,l) = \crl{\omega\in\Omega: \SP[V_t[\omega],Q(t),Z^{t}]< 1}$.
\end{lemma}
\begin{proof}
This problem selects, among all the solutions of $P[\vb,\vq+\Delta\vq,\vz+\Delta\vz]$, one with the largest component $X_{j'l'}$.
From~\cref{lem:pricing_ineqs} we know that, if $X_{j'l'}\geq 1$, then posting $\f_{j'l'}$ is satisfying. 
\end{proof}

We have converted the condition ``$\exists X$ solving $P[v,\vq+\Delta\vq,\vz+\Delta\vz]$ with $X_{j'l'}\geq 1$'' to an optimization program. 
Let $\bar\vx$ be the solution to the proxy $P[\vb,\vq,\vz]$ and let $v_t$ be the objective value (recall that $V_t$ is the value of $P[\vb,\vq+\Delta\vq,\vz+\Delta\vz]$). 
Since the algorithm picks the price with the largest component, assume $\bar x_{j'l'}=\max_{l}\bar x_{j'l}\gg 1$.
In particular, $\SP[v_t,\vq,\vz]\gg 1$ for this fixed $(j',l')$. 
We want to show that $\SP[V_t,\vq +\Delta\vq,\vz+\Delta\vz]\geq 1$ for that particular $(j',l')$.
To that end, we need to bound the difference between $\SP[V_t,\vq +\Delta\vq,\vz+\Delta\vz]$ and $\SP[v_t,\vq,\vz]$. 
This difference depends on (i) $v_t-V_t$, (ii) $\Delta$, and (iii) the dual variables of $(\SP[V_t,\vq +\Delta\vq,\vz+\Delta\vz])$.
Observe that the quantities (i)-(iii) are random. 
We state the result below; the proof is provided in~\cref{sec:pricingproofs}.

\begin{lemma}\label{lem:pricing_dual}
There is a constant $c$ that depends only on $(\vf,\vp,A,F_1,\ldots,F_n)$ such that, for all $t\geq c$, with probability $1-c/t^2$, 
$\SP[V_t,Q(t),Z^{t}]-\SP[v_t,\E[Q(t)],\E[Z^{t}]]\geq -c\sqrt{t\log(t)}$ .
\end{lemma}

\cref{lem:pricing_dual} leads to the bound in \cref{lem:information_loss_pricing}.
Indeed, since the LP in \cref{eq:pricing_lp} has the constraint $\sum_{l\in[m]\cup \crl{\reject}}\bar x_{jl} = tp_j$, the maximum entry is guaranteed to have a value of at least $tp_j/(m+1)$.
Therefore, by definition of the selection program, $\SP[v_t,\E[Q(t)],\E[Z^{t}]]\geq tp_j/(m+1)$.
We know that posting $f_{jl'}$ is satisfying whenever $\SP[V_t,Q(t),Z^{t}]\geq 1$ (see \cref{lem:sel}), hence posting the maximum entry is satisfying provided that $tp_j/(m+1)-c\sqrt{t\log(t)}\geq 1$, which holds for all $t$ large enough.

\subsection{Numerical Simulations}
\label{sec:pricing_numeric}

\begin{figure}[b]
    \centering
    \includegraphics[scale=0.7]{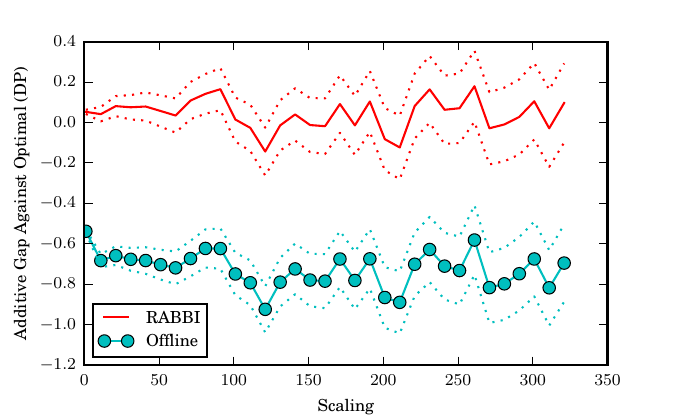}%
    \includegraphics[scale=0.7]{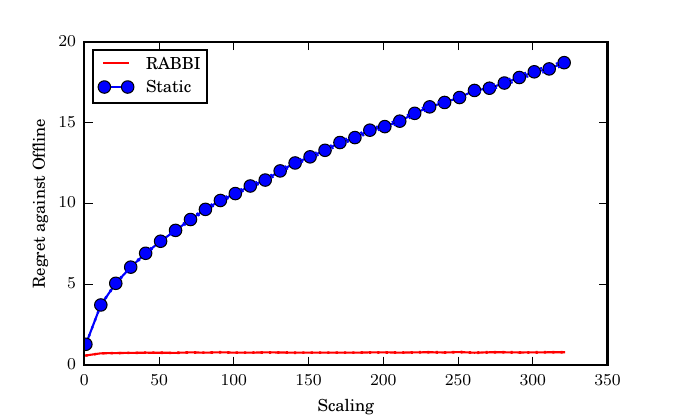}
    \caption{Regret in the `small system' ($n=1$ and $d=1$), with horizon $T=20k$ and initial budget $B=6k$, under scaling $k = 1, 10, 20,\ldots,340$.
    Dotted lines represent 90\% CI.
    (LEFT) additive gaps against the optimal policy, i.e., $V^{DP}-V^{\rabbi}$ and $V^{DP}-V^{\off}$ (RIGHT) Regret of two policies against \off.
    }
    \label{fig:numeric}
\end{figure}

We test our algorithm on two systems, henceforth the ``small system'' and the ``large system''. 
For each system, we consider a sequence of instances with increasing horizons and initial inventories. 

The small system corresponds to the one-dimensional problem ($n=1$ and $d=1$); in this case we can solve the DP for small enough horizons and directly compute the optimality gap. 
The large system corresponds to a multi-dimensional problem with $n=20$, $d=25$ and $m=3$.
The DP solution is intractable for the large system, yet we can compute the offline benchmark and compare our algorithm against it. The optimality gap, recall, is bounded by the offline vs. \rabbi gap.

For the small system, the $k$-th instance has budget $B=6k$ and horizon $T=20k$.
For each scaling $k$, we run $100,000$ simulations.
We consider the following primitives: prices are $(1,2,3)$ and the private reward $R^t$ has an atomic distribution on $(1,2,3)$ with probabilities $(0.3,0.4,0.3)$.
The instance is chosen such that it is dual degenerate for~\eqref{eq:pricing_lp} which is supposedly the more difficult case \cite{jasin_pricing}.
For large system, the parameters were generated randomly and are reported in Appendix~\ref{sec:table}, the $k$-th instance has horizon $T=100k$ and budgets $B_i=10k$ for all $i\in[25]$.

For the small system we consider $k$ small enough (short horizon) so that we can compute the optimal policy; this computation becomes intractable already for moderate values of $k$ (\rabbi however scales gracefully with $k$ as it only requires re-solving an LP in each period).  
In~\cref{fig:numeric} (LEFT) we display the gap between the optimal solution and both the \rabbi and \off's value.
We make two observations: (i) the \off benchmark outperforms the optimal (as it should), but by a rather small margin, and  (ii) \rabbi has a constant regret (i.e., independent of $k$) relative to \off, and hence constant optimality gap. 
In contrast, a full information benchmark would outperform the optimal by too much to be useful. 

\begin{figure}[t]
    \centering
    \includegraphics[scale=0.7]{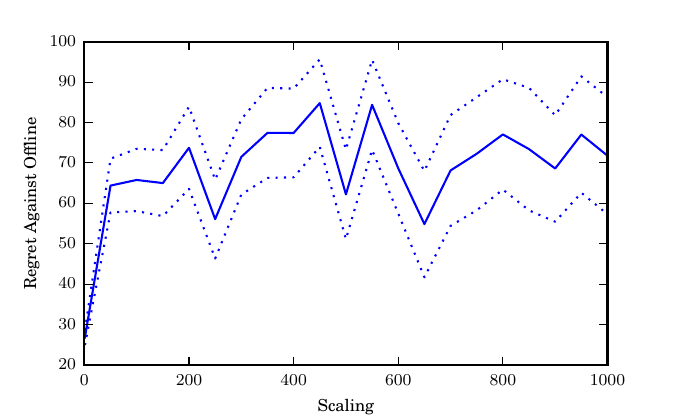}%
    \includegraphics[scale=0.7]{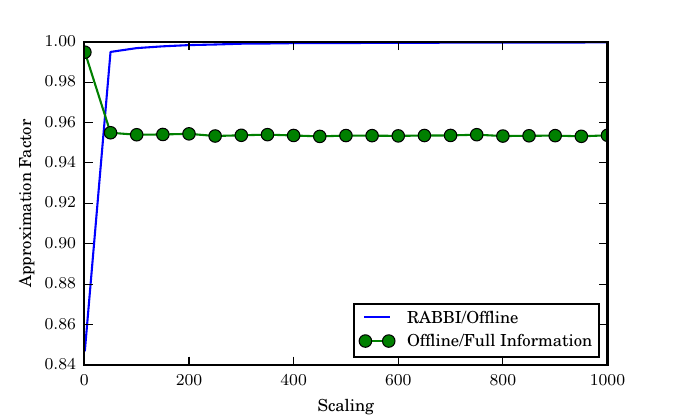}
     \caption{Performance in the `large system' ($n=20$ and $d=25$) with horizon $T=100k$ and initial budgets $B_i=10k$ for $i\in[25]$, under scaling $k = 1,2,\ldots,1000$.
    (LEFT) Regret against \off, i.e., $V^{\off}-V^{\rabbi}$;
    dotted lines represent 90\% CI.
    (RIGHT) Approximation factors $V^{\rabbi}/V^{\off}$ of \rabbi compared to \off, and $V^{\off}/V^{\text{Full-Info}}$ of \off against the full-information benchmark; this shows that the full-information benchmark is indeed too loose, as it is $\Omega(T)$ away from the DP.
    }
    \label{fig:numeric_big}
\end{figure}

In \cref{fig:numeric} (RIGHT), we compare \rabbi to the optimal static pricing policy which has regret $\Omega(\sqrt{k})$ \citep{gallego_pricing}. 
In particular, if $D(\f)$ denotes the demand at fare $f$, we choose the static price to be the one that maximizes the revenue function $\f\cdot D(\f)=\f\cdot T \cdot \bar F(\f)$ subject to the constraint $D(\f)\leq B$.
The solution is the better of two prices: (i) the market clearing price, i.e., that satisfies $D(\f)=B$ or (ii) the monopoly price which maximizes $\f D(\f)$. We note though that when a continuum of prices are allowed,~\citep{jasin_pricing} propose an algorithm (that, like \rabbi, is based on resolving an optimization problem in each period) which achieves a regret that is logarithmic in $k$ under certain non-degeneracy assumptions on the optimization problem and differentiability assumptions on the valuation distribution. 
In contrast, our constant regret guarantees hold under a finite price menu. 

In Figure \ref{fig:numeric_big} we display the results for the large system. Here, since the DP is intractable, we use the offline benchmark. The resulting regret is negligible relative to the total value as captured by the approximation-factor on the right-hand side of the figure.
We also present the competitive ratio of \off against the full-information benchmark (this upper bounds the competitive ratio of \emph{any non-anticipatory policy}) and observe that is bounded away from $1$, hence showing that the full-information benchmark is $\Omega(T)$ away from the DP in our randomly generated instance, which confirms the need for our refined benchmark.
\subsection{Posted Pricing With Customer Choice}
\label{ssec:consumerchoice}

We now consider settings where customers, rather than requesting a specific product, make a choice between multiple substitutes. 
As a concrete example, consider a hardware store selling washers and dryers; the store can set a separate price for a washer, a dryer, and also for buying a washer-and-dryer bundle (i.e., one of each). An incoming customer sees the prices and chooses to buy each of the three options (or nothing at all) with some probability depending on the price menu.
See~\cite[Chapter 7]{talluri_book} for details on such customer-choice models. For exposition, we focus here on a single-customer-type, with arbitrary (but known) customer-choice model.

As before, the controller chooses a price to post for each product and selling one unit of product $j\in [n]$ depletes resources according to $A_j\in \crl{0,1}^d$. There is a discrete set of ``assortment menus'', denoted by $\Alpha$. 
An assortment $\alpha\in\Alpha$ is associated with a vector of prices $(f_{1\alpha},\ldots,f_{n\alpha})$, one price per product. Setting $f_{j\alpha}=\infty$ corresponds to not offering product $j$. 
Note that if each product's price is restricted to take one of $m$ distinct values, then there are at most $\abs{\Alpha} \leq m^n$ different assortments. The actual number of relevant assortments might, however, be much smaller than this.

An arriving customer, when offered assortment $\alpha$, chooses to buy product $j$ with a probability $p_j(\alpha)$, with $\sum_{j=0}^n p_j(\alpha)=1$ (where we use $j=0$ for the no-purchase option). 
These probabilities might be derived, for example, from a standard family such as the multinomial-logit model, nested logit model, etc.; our results do not need any specific structure on the choice probabilities (although assuming more structure may lead to better regret scaling with respect to the number of price menus and more efficient ways of solving the resulting LP relaxation).

The process unfolds as follows: (i) at time $t$ the controller posts an assortment $\alpha\in \Alpha$; (ii) with probability $p_j(\alpha)$, the arriving customer buys one unit of product $j$ (with product $0$ corresponding to no-purchase). 
Now given the choice probabilities, we can simulate the choice model as follows: we assume w.l.o.g. that the customer arriving at time $t$ is endowed with an i.i.d random variable $\xit\sim \text{Uniform}(0,1)$, and assert that the customer buys product $j$ if $\xit \in $ $[\sum_{j'=0}^{j-1}p_{j'}(\alpha), \sum_{j'=0}^jp_{j'}(\alpha)]$. Note that the order of products here is arbitrary.

Applying \rabbi to this setting gives the following result.
\begin{theorem}[Dynamic Pricing with Customer Choice]
\label{theo:reg_multi_pricing}
For any choice model with probabilities and prices $(p_j(\alpha),f_{j\alpha}:j\in [n],\alpha\in\Alpha)$, \rabbi obtains a regret that depends only on $(A,p,f)$, but is independent of the horizon length $T$ and initial budget levels $B\in\N^d$. 
\end{theorem}

\paragraph{Algorithm and Analysis:} The following LP extends~\cref{eq:pricing_lp} to incorporate consumer choice.
\begin{alignat}{2}
\label{eq:choosing_lp}
(P[\vb,\vq,\vz])\qquad & \text{maximize: } & & \sum_{\alpha\in\Alpha}x_\alpha\sum_{j\in [n]}\f_{j\alpha}q_{j\alpha} \\
& \text{subject to: }& \quad & \begin{aligned}[t]
\sum_{\alpha\in\Alpha}\sum_{j\in [n]}a_{ij}q_{j\alpha}x_{\alpha} &\leq b_i & \forall\,i & \in[d] \nonumber\\
\sum_{\alpha\in\Alpha}x_{\alpha} & = t  \nonumber\\
\vx & \geq 0 \nonumber
\end{aligned}
\end{alignat}

Here $q_{j\alpha}$ stands for the fraction of customers that would buy product $j$ if presented with the price assortment $\alpha$. \rabbi re-solves, in each period, this LP with the expected fraction $q_{j\alpha}=p_j(\alpha)$. 
In contrast, \off knows $Q_{j\alpha}(t)$, the realized fraction of customers that, given assortment $\alpha$, would buy product $j$ (formally, \off is equipped with the canonical augmented filtration with variables $(Q_{j\alpha}(T):j\in [n],\alpha\in\Alpha)$), and solves \cref{eq:choosing_lp} with $q_{j\alpha}=Q_{j\alpha}(t)$, where :  
\begin{equation*}
Q_{j\alpha}(t) \defeq \frac{1}{t}\sum_{\tau=1}^tY_{j\alpha}^t \quad \text{ where }\quad
Y_{j\alpha}^t \defeq 
\In{\sum_{j'=0}^{j-1}p_{j'}(\alpha)\leq \xit \leq \sum_{j'=0}^jp_{j'}(\alpha)}.
\end{equation*}

With the (re)defined key ingredients---namely the LP in \cref{eq:choosing_lp} and \off's information structure---it is evident that that the analysis of this expanded model is identical to that of the basic (no-choice) pricing setting with obvious changes. For example, if assortment $\alpha$ is posted at time $t$, the random collected reward is $\sum_jY_{j\alpha}^tf_{j\alpha}$ and the random inventory at $t-1$ is $b-\sum_jA_jY_{j\alpha}^t$. In turn, the Bellman loss in~\cref{eq:losspricing}  takes on the form 
\begin{equation}
\label{eq:loss_multi_pricing}
\BL(t+1,\vb,\alpha) = P[\vb-\sum_jA_jQ_{j\alpha}(t+1),Q(t+1),t] - \E_{t+1}[ P[\vb-\sum_jA_jY_{j\alpha}^{t+1},Q(t),t]]
\end{equation}
Now we have a sum over products $j$, but the analysis goes through via linearity of expectations.

\paragraph{Numerical Simulations:}
We demonstrate our algorithm for the following simple choice-model with two resources $(R1,R2)$, and three products $(\{R1\},\{R2\},\{R1,R2\})$ (for example, a hardware store selling washers ($R1$), dryers ($R2$) or washer-and-dryer combos $\{R1,R2\}$). 
The controller has initial inventories of each resource, and can choose among one of 7 price assortments: high and low prices with/without discounts for buying the bundle, and price menus assuming stock-out of either or both resource. The price menus and choice probabilities are detailed in \cref{tab:assortments}. We run \rabbi for this instance while scaling the horizon and initial inventory; see \cref{fig:pricing_choice}.

\begin{table}[h]
\small
\begin{tabular}{cl|ccccccc|}
\multicolumn{1}{l}{}                                &  Products & \multicolumn{1}{l}{High} & \multicolumn{1}{l}{High-discount} & \multicolumn{1}{l}{Low} & \multicolumn{1}{l}{Low-discount} & \multicolumn{1}{l}{Only R2} & \multicolumn{1}{l}{Only R1} & \multicolumn{1}{l|}{Stock-out} \\ 
\cline{2-9} 
\multicolumn{1}{c|}{\multirow{3}{*}{$f_{j\alpha}$}} & \{R1\} & 5                        & 5                                 & 3                       & 3                                & $\infty$                               & 5                            & Out                          \\
\multicolumn{1}{c|}{}                               & \{R2\}    & 5                        & 5                                 & 3                       & 3                                & 5                                 & $\infty$                          & $\infty$                          \\
\multicolumn{1}{c|}{}                               & \{R1,R2\}     & 10                       & 9                                 & 6                       & 5                                & $\infty$                               & $\infty$                          & $\infty$                          \\ 
\cline{2-9} 
\multicolumn{1}{c|}{\multirow{3}{*}{$p_j(\alpha)$}} & \{R1\} & 0.2                      & 0.2                               & 0.3                     & 0.3                              & 0                                 & 0.2                          & 0                            \\
\multicolumn{1}{c|}{}                               & \{R2\}    & 0.2                      & 0.2                               & 0.3                     & 0.3                              & 0.2                               & 0                            & 0                            \\
\multicolumn{1}{c|}{}                               & \{R1,R2\}     & 0.1                      & 0.15                              & 0.2                     & 0.25                             & 0                                 & 0                            & 
0            \\
\cline{2-9}                
\end{tabular}
\caption{Example with seven assortments:
We consider high/low prices with and without bundling discount (i.e., buying $\{R1,R2\}$ is cheaper than buying each individually). The other assortments can be used if items sell out.
}
\label{tab:assortments}
\end{table}

\begin{figure}[!ht]
	\centering
	\includegraphics[scale=0.7]{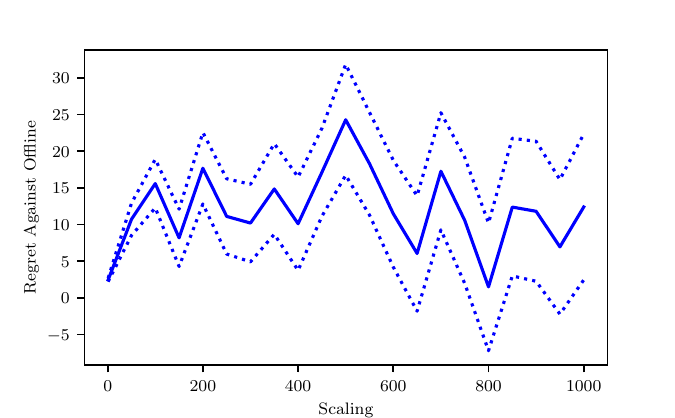}%
	\includegraphics[scale=0.7]{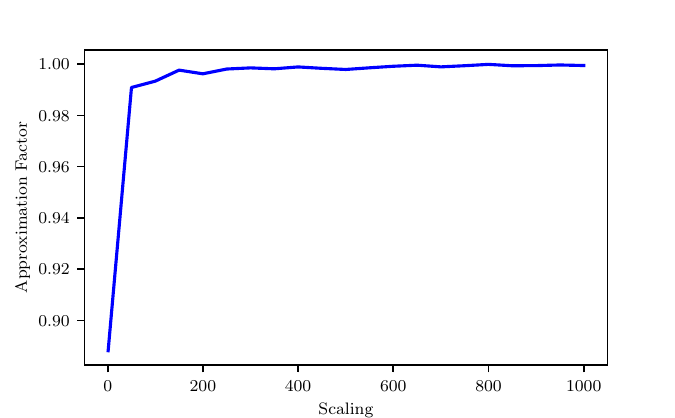}
	\caption{Performance of pricing-\rabbi with customer choice (see~\cref{tab:assortments}). We set the horizon as $T=10k$ and the inventory as $(R1,R2) = (3k,2k)$, and vary scaling parameter $k=1,\ldots,1000$. (LEFT) Regret against \off, with 90\% confidence intervals. (RIGHT) Approximation ratio of \rabbi against DP. 
		\label{fig:pricing_choice}}
\end{figure}

\section{Online Knapsack With Distribution Learning}
\label{sec:learning}

Finally we consider the distribution-agnostic online knapsack setting. We study first the full feedback setting and in \cref{sec:bandits} extend to censored feedback.
As in the baseline \oknap, at each time $t$, the arrival is of type $j\in[n]$ with known probability $p_j$. 
Type $j$ has a known weight $w_j$ and random reward $R_j$, drawn from a distribution $F_j$ with $r_j\defeq \E[R_j]$. 
Critically, we assume $r_j$ and $F_j$ are {\em unknown} to \onl. 

The reward $R_j$ is revealed only after the decision of accept/reject has been made.
At the end of each period, we observe the realization of both accepted and rejected items. 
In contrast, \off has access to the distribution $F_j$, \emph{but not to the realizations}. 
We assume that, before the process starts, we are given one sample of each type, and with $t$ periods to go, define $\Rt_j$ to be the empirical average of the observed rewards for type-$j$ arrivals.

As in probing, we divide each period $t\in\{T,T-1,\ldots,1\}$ into two stages, $t$ and $t-1/2$.
In the first stage (i.e., period $t$) the input reveals the type $j\in[n]$, and in second stages (i.e., period $t-1/2$) the reward is revealed. 
The random inputs are given by $\xit\in[n]$ and $\xi^{t-1/2}\in\R$.
The state space is $\S =\Rp\times\crl{\varnothing,\accept,\reject}$, where the first component is the remaining knapsack capacity. 
At a first stage, given a state of the form $s=(b,\varnothing)$, we choose action $\diamond\in\crl{\accept,\reject}$, reducing the capacity if $\diamond=\accept$.
At the second stage, the state is of the form $s=(b,\diamond)$ with $\diamond\in\{\accept,\reject\}$, and we collect the reward only if $\diamond=\accept$.
Formally, the rewards are $\Re((b,\accept),\xi^{t-1/2},\varnothing)=\xi^{t-1/2}$ and $\Re((b,\reject),\xi^{t-1/2},\varnothing)=0$.

\subsection{Offline Benchmark and Online Policy for Distribution-Agnostic Online Knapsack}

To define \off, $\varphi$ and $\hatphi$, consider the following LP parametrized by $(b,\vy,\vz)\in \Rp\times\R^n\times\Rp^n$
\begin{alignat}{2}
\label{eq:knapsack_lp}
(P[b,\vy,\vz]) \qquad & \text{maximize: } & & \sum_{j}y_{j}x_j  \\
& \text{subject to: }& \quad & 
\begin{aligned}[t]
\sum_{j} w_jx_{j\accept}  &\leq b \nonumber\\
x_{j\accept}+x_{j\reject} & = z_j & \forall\,j & \in [n] \nonumber\\
\vx & \geq 0 \nonumber
\end{aligned}
\end{alignat}
Note that if the average rewards $\vr$ were known, then setting $\vy=\vr$ we get the LP relaxation of \cref{eq:knapsack_val} for the baseline \oknap.
Moreover, for any $\vr$, the optimal LP solution sorts types by their ``bang for the buck'' ratios $r_j/w_j$, and accepts them greedily.
In particular, the solution only requires knowing the ranking induced by $\vr$.

\paragraph{\off Benchmark and Relaxed Value Function:} In this setting, we define \off as the controller that knows the number of arrivals $Z_j^{T}$ for each $j$, and also, \emph{knows the ranking of the types} (i.e., knows $r_j/w_j\,\forall\,j\in[n]$).

Formally, \off is defined via the filtration $\calG_t=\sigma(\crl{\xit:t\in[\T]}\cup\crl{\xi^\tau:\tau\geq t})$.
This is a canonical filtration (see \cref{def:canonical_filtration}) with variables $(G_\theta:\theta\in\Theta)=(\xi^{t}:t\in[\T])$.
Observe that the future rewards, corresponding to times $t-1/2$, are not revealed.
Moreover, the relaxed value is defined as $\varphi(t,s|\calG_t) = P[b,\vr,Z^{t}]$ for first stages and
\begin{equation}\label{eq:knapsack_relax}
\varphi(t-1/2,s|\Gt) = \left\{ 
\begin{array}{ll}
P[b,\vr,Z^{t-1}] & \diamond = \texttt{r} \\ 
\xi^{t-1/2} + P[b,\vr,Z^{t-1}] & \diamond = \texttt{a}.  
\end{array} 
\right.
\end{equation}

\begin{remark}[MDP relaxations for distribution-agnostic settings] 
We note here that the underlying problem in this setting \emph{does not directly admit an MDP}, as the distribution of rewards is unknown. However, once we reveal the arrivals to \off, the relaxation does admit a well-defined MDP. 
By benchmarking against \off, we bypass the need to explicitly formulate an \onl control problem with distribution learning in this setting. 
\end{remark}

\paragraph{Value Function estimate and Online Policy:} 
Recall we define $\Rt_j$ to be the empirical average of the observed rewards for type-$j$ with $t$ periods to go. We define the estimated value as $\hatphi = P[\Bt,\Rt,\E[Z^{t}]]$, resulting in the corresponding online policy given in~\cref{alg:learning}.
\begin{algorithm}
	\caption{Learning \rabbi}
	\label{alg:learning}
	\begin{algorithmic}[1]
		\Require Access to solutions of $(P[b,\vy,\vz])$
		\Ensure Sequence of decisions for \onl.
		\State Set $B^{\T}\gets B$ as the given initial state and $R^T$ as the single sample of each $j$.
		\For{$t\in\{T,T-1,
			\ldots,1\}$}
		\State Compute $\Xt$, an optimal solution to $(P[\Bt,\Rt,\E[Z^{t}]])$.
		\State Observe the arrival type (context), say $\xit=j$, and take any action $\hUt\in\argmax_{u=\accept,\reject}\crl{\Xt_{ju}}$
		\State If $\hUt=\accept$, collect random reward $R_j$ and reduce the budget $B^{t-1}\gets \Bt-w_{j}$. Else, $B^{t-1}\gets \Bt$. 
		\State Update empirical averages $R^{t-1}$ based on $\Rt$ and the observation $R_j$.
		\EndFor
	\end{algorithmic}
\end{algorithm}

\subsection{Regret Analysis for Distribution-Agnostic Online Knapsack}

As in the earlier sections, we first demonstrate that $\varphi$ satisfies the Bellman inequalities
\begin{lemma}\label{lem:relax_knapsack}
The relaxation $\varphi$ defined in \eqref{eq:knapsack_relax} satisfies the Bellman Inequalities with exclusion sets 
\begin{align*}
\calB(t,b)=\crl{\omega\in\Omega:\not\exists X \text{ solving } (P[b,\vr,Z^{t}]) \text{ s.t. }  X_{\xit,\accept}\geq 1 \text{ or } X_{\xit,\reject}\geq 1 }.
\end{align*}
\end{lemma}
\begin{proof}
The initial ordering in \cref{def:relaxed_bellman} follows from an argument identical to that of \cref{lem:probing_initial}.
The monotonicity property follows from \cref{prop:sufficient}.
\end{proof}

To complete the proof of \cref{theo:reg_learning}, we need to characterize the information loss under~\cref{alg:learning}.
The relaxation relies on the knowledge of $\vr$ (the true expectation) and $Z^{t}$.
The natural estimators are the empirical averages $\Rt$ and expectation $\mu(t)=\E[Z^{t}]$, respectively.
Specifically, we use maximizers $\Xt$ of $(P[b,\Rt,\mu(t)])$ to ``guess'' those of $(P[b,\vr,Z^{t}])$.

The overall regret bound is $\rmax(\reg_1+\reg_2)$, where $\reg_1$ and $\reg_2$ are two specific sources of error.
When the estimators $\Rt$ of $\vr$ are accurate enough, the error is $\reg_1$ and is attributed to the incorrect ``guess'' of a satisfying action, i.e., $\reg_1$ is an algorithmic regret. 
The second term, $\reg_2$, is the error that arises from  insufficient accuracy of $\Rt$, i.e., $\reg_2$ is the learning regret. 
The maximum loss satisfies $\rmax\leq \max_{j,i}\crl{w_ir_j/w_j-r_i}$ and we can show that 
\begin{align*}
\reg_1 \leq  2\sum_j \frac{(\wmax/w_j)^2}{p_j} 
\quad \text{ and }\quad 
\reg_2\leq 16\sum_j\frac{1}{p_j(w_j\delta)^2}.
\end{align*}
In sum, the regret is bounded by $(\max_{j,i}\crl{w_ir_j/w_j-r_i})\cdot (2\sum_j \frac{(\wmax/w_j)^2}{p_j} +
 16\sum_j\frac{1}{p_j(w_j\delta)^2})$.
 
\begin{remark}[non-i.i.d arrival processes]\label{rem:arrival_learning}
We used the i.i.d.\ arrival structure to bound two quantities in the proof of \cref{theo:reg_learning}: (1) $\Pr[\norm{Z^{t}-\E[Z^{t}]}\geq c\E[Z^{t}]]$ and (2) $\E[e^{-c\Nt_j}]$, where, recall, $\Nt_j$ is the number of type-$j$ observations. 
The result holds for other arrival processes that admit these tail bounds.
\end{remark}

\subsection{Censored Feedback} \label{sec:bandits}

We consider now the case where only accepted arrivals reveal their reward. 
We retain the assumption of \cref{theo:reg_learning} that there is a separation $\delta>0$: $\abs{\bar r_{j}-\bar r_{j'}}\geq \delta$ for all $j\neq j'$, where $\bar r_j= \E[R_j]/w_j$. 

In the absence of full feedback, we will introduce  a unified approach to obtaining the optimal regret (up to constant factors), that takes the learning method is a plug-in. The learning algorithm will decide between explore or exploit actions.  
Examples of learning algorithms, that also give bounds that are explicit in $t$, include modifications of UCB \citep{srikant}, $\varepsilon$-Greedy or simply to set apart some time for exploration (see \cref{cor:learning} below). 

Recall that $\sigma:[n]\to [n]$ is the ordering of $[n]$ w.r.t.\ the ratios $\bar r_j=r_j/w_j$ and $\sigmat:[n]\to [n]$ is the ordering w.r.t.\ ratios $\bRt_j= \Rt_j/w_j$. The discrepancy $\Pr[\sigma\neq\sigmat]$ depends on the plug-in learning algorithm (henceforth \bandits).
\bandits receives as inputs the current state $\St$ (remaining capacity), time, and the natural filtration $\calF_t$.
The output of \bandits is an action in $\crl{\explore,\exploit}$.
If the action is $\explore$, we accept the current arrival in order to gather information, otherwise we call our algorithm to decide, as summarized in \cref{alg:bandits}. 
Note that $\calF_t$ has information only on the observed rewards, i.e., accepted items. 

\begin{algorithm}
\caption{Bandits \rabbi}
\label{alg:bandits}
\begin{algorithmic}[1]
\Require Access to \bandits and \cref{alg:learning}.
\Ensure Sequence of decisions for \onl.
\State Set $S^\T$ as the given initial state
\For{$t=\T,\ldots,1$}
	\State Observe input $\xit$ and let $U\gets\bandits(T,t,\St,\calF_t)$.
	\State If $U=\explore$, accept the arrival
	\State If $U=\exploit$, take the action given by \cref{alg:learning} 
	\State Update state $S^{t-1}\gets \St-w_{\xit}$ if accept or $S^{t-1}\gets \St$ if reject.
\EndFor
\end{algorithmic}
\end{algorithm}

\begin{theorem}\label{theo:reg_sum}
Let $\reg_1$ be the regret of \cref{alg:learning}, as given in \cref{theo:reg_learning}.
Define the indicators $\explore_t,\exploit_t$ which denote the output of \bandits at time $t$.
The regret of \cref{alg:bandits} is at most $\rmax M$, where
\begin{align*}
M =
\reg_1 + \E\brk*{\sum_t\explore_t} + \E\brk*{\sum_t\Pr[\sigma\neq\sigmat]\exploit_t}.
\end{align*}
\end{theorem}
The expected regret of \cref{alg:bandits} is thus bounded by the regret of \cref{alg:learning} in the full feedback setting, plus a quantity controlled by \bandits. 
In the periods where \bandits says \explore (which, in particular, implies accepting the item), the decision might be the wrong one (i.e., different than \off's). We upper bound this by the number of exploration periods. This is the second term in $M$. The decision might also be wrong if \bandits says \exploit (in which case we call \cref{alg:learning}), but the (learned) ranking at time $t$, $\hat{\sigma}^t$, is different than $\sigma^t$. This is the last term in $M$. 
Finally, even if the learned ranking is correct, \exploit can lead to the wrong ``guess'' by \cref{alg:learning} because the arrival process is uncertain. 
This is the first term in $M$.

\cref{cor:learning} uses a naive \bandits which explores until obtaining $\Omega(\log\T)$ samples and achieves the optimal (i.e., logarithmic) regret scaling.  
The constants may be improved by changing the  \bandits module we use; any such algorithm has the guarantee given by \cref{theo:reg_sum}.
With the naive \bandits, the bound follows from a generalization of coupon collector~\citep{coupon}.

\begin{corollary}\label{cor:learning}
If we first obtain $\frac{8}{(w_j\delta)^2}\log\T$ samples of every type $j$, then we can obtain $O(\log\T)$ regret, which is optimal up to constant factors.
\end{corollary}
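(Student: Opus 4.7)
The plan is to invoke Theorem \ref{theo:reg_sum} with the naive \bandits module specified in the corollary: at each period $t$, return \explore\ until the \onl has observed at least $N_j \defeq \lceil 8(w_j\delta)^{-2}\log T\rceil$ samples of every type $j\in[n]$, and return \exploit\ in every subsequent period. By Theorem \ref{theo:reg_sum}, the expected regret is at most $\rmax M$ with
\[
M \;\leq\; \reg_1 \;+\; \E\Bigl[\sum_t \explore_t\Bigr] \;+\; \E\Bigl[\sum_t \Pr[\sigma\neq\sigmat]\,\exploit_t\Bigr],
\]
so it suffices to show that each of the three summands is at worst $O(\log T)$, and in fact that the first and third are $O(1)$.

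The first summand $\reg_1$ is the regret of \cref{alg:learning} in the full-feedback setting, which Theorem \ref{theo:reg_learning} bounds by a constant depending only on the primitives. For the third summand, once we enter the exploit phase we have $\Nt_j\geq N_j$ for every $j$. Reusing the sub-Gaussian estimate from the proof of Theorem \ref{theo:reg_learning} (a Chernoff bound on $|\bRt_j-\bar r_j|\geq \delta/2$ together with a union bound over the pairwise comparisons defining the ratio ranking) yields
\[
\Pr[\sigma\neq\sigmat\mid\calF_t]\;\leq\;2\sum_j e^{-\Nt_j(w_j\delta)^2/8}\;\leq\;2\sum_j e^{-\log T}\;=\;\frac{2n}{T},
\]
and summing over $t\in[T]$ contributes only $2n = O(1)$.

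The bulk of the work is the middle summand $\E[\sum_t\explore_t]$, which equals the expected duration of the exploration phase. Let $T_j$ be the first period at which the $N_j$-th type-$j$ arrival has been seen; under i.i.d.\ arrivals, $T_j$ is negative-binomially distributed with mean $N_j/p_j$. The exploration phase terminates at $\max_j T_j$, and since each $T_j\geq 0$,
\[
\E\Bigl[\sum_t \explore_t\Bigr]\;=\;\E[\max_j T_j]\;\leq\;\sum_j \E[T_j]\;=\;\sum_j\frac{N_j}{p_j}\;=\;O\Bigl(\sum_j \frac{\log T}{p_j(w_j\delta)^2}\Bigr)\;=\;O(\log T).
\]
This is the generalized coupon-collector argument referenced in the statement. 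Combining the three bounds gives $M=O(\log T)$, and hence regret $\rmax M = O(\log T)$. The matching $\Omega(\log T)$ lower bound is the classical Lai--Robbins impossibility, which applies because in the non-binding capacity regime the problem reduces to a standard stochastic multi-armed bandit.

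Conceptually there is no real obstacle once Theorems \ref{theo:reg_learning} and \ref{theo:reg_sum} are in hand; the only new ingredient is the elementary coupon-collector bound. What needs a small amount of care is verifying that the crude union-bound step $\E[\max_j T_j]\leq \sum_j \E[T_j]$ is tight enough --- it is, because in our parametric regime the $p_j$'s and $w_j$'s are fixed positive constants, so $\sum_j N_j/p_j$ scales only as $\log T$ and the exploration cost does not swamp the constant-regret contribution from $\reg_1$.
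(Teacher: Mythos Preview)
Your proof is correct and follows exactly the approach the paper sketches: the paper's own justification is a single sentence pointing to the ``generalization of the coupon collector problem,'' and you have filled in precisely those details---bounding $\reg_1$ by a constant via Theorem~\ref{theo:reg_learning}, bounding the exploit-phase learning error by $2n/T$ per period using the sub-Gaussian tail, and bounding the exploration length by $\sum_j N_j/p_j$ via the negative-binomial (coupon-collector) argument. One cosmetic point: $\sum_t\explore_t = \min(T,\max_j T_j)\leq \max_j T_j$, so the first equality in your displayed chain should be an inequality, but this does not affect the bound.
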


\section{Concluding Remarks}
We developed a framework that provides rigorous support to the use of simple optimization problems as a basis for online re-solving algorithms. 
The framework is based on using a carefully chosen offline benchmark, that guides the online algorithm.
The regret bounds then follow from our use of Bellman Inequalities and a useful distinction between Bellman Loss and Information Loss.

As is often the case in approximate dynamic programming, the identification of a function $\varphi$ satisfying the Bellman Inequalities requires some ad-hoc creativity but, as our example illustrate, is often rather intuitive. In \cref{sec:howto} we provide sufficient conditions, applicable to cases where $\varphi$ has a natural linear representation, to verify the Bellman inequalities. 
These conditions are intuitive and likely to hold for a variety of resource allocation problems. Importantly, once such a function is identified, our \rabbi framework provides a way of obtaining online policies from $\varphi$, and corresponding regret bounds. 

We illustrate our framework on three settings. First we consider online probing, which serves as an instance of a larger family of two-stage decision problems, wherein there is an inherent trade-off between getting refined information, and the cost of obtaining it. Next we consider dynamic pricing, which is a well-studied problem, and is representative of settings where rewards and transitions are random.
Finally, our study of online contextual bandits with knapsacks showcases a separation of the underlying combinatorial problem from the parameter estimation problem.  

It is our hope that this structured framework will be useful in developing online algorithms for other problems, whether these are extensions of those we studied here or completely different.

\ACKNOWLEDGMENT{SB and AV gratefully acknowledge support from the ARL under grant W911NF-17-1-0094, and the NSF under grants CNS-1955997, DMS-1839346 and ECCS-1847393; IG's work was supported by the DoD under grant W911NF-20-C-0008.}




\newpage
\begin{APPENDICES}{}

\section{A Sufficient Condition for Bellman Inequalities}
\label[appendix]{sec:howto}

In this section, we construct $\varphi$ based on a general optimization program and provide a sufficient condition to guarantee monotonicity. 
This serves to underscore some of the key elements in a problem's structure that allows one to construct low regret online policies. 
This guideline does not apply to all the examples we study here: in particular, it applies to the baseline and learning variants, but not to probing or pricing.

We study a particular case of canonical filtrations (see \cref{def:canonical_filtration}),  where the random variables $G_\theta$ that we reveal are the inputs $\xi^\theta$ for some fixed times $\Theta$ (see \cref{fig:canonical} for an illustration).

Recall that we reveal some inputs to \off, but not necessarily all of them; we call \emph{concealed inputs} those not revealed to \off.
Informally speaking, we will show that $\varphi$ satisfies the Bellman Inequalities  if (i) \off's relaxed value $\varphi$ can be computed with a linear program and (ii) the concealed inputs are in the objective function only (not in the constraints). 
Requirements (i) and (ii) are appealing because they are verifiable directly from the problem structure without any computation.

Recall that, with $t$ periods to go, \off knows the randomness $\crl{\xi^T,\ldots,\xit,\xi_\Theta}$, where we denote $\xi_\Theta = (\xi^\theta:\theta\in\Theta)$.
In other words, we reveal $\crl{\xi^T,\ldots,\xit,\xi_\Theta}$, while the inputs $\crl{\xi^l:l<t, l\notin\Theta}$ are concealed.

Suppose the relaxation is an LP with decision variables $\vx$ (see \cref{eq:generic_relaxation}):
\begin{equation}\label{eq:varphilinear} 
\varphi(t,s|\calG_t) = \max_{\vx \in \R^{\Xi\times[T]\times\U}}\crl{\E[h(\vx;\xi^1,\ldots,\xiT)|\calG_t]: g(\vx;s,\xi^T,\ldots,\xit,\xi_\Theta)\geq \vec 0},
\end{equation}
where $\U,\Xi$ are the control and input spaces.
For input $\xi$, control $u$, and time $t$, we interpret $x_{\xi,t,u}$ as a variable indicating if \off uses $u$ at time $t$ when presented input $\xi$. 

\begin{proposition}\label{prop:sufficient}
Let $h,g$ be linear functions and let $\varphi$ be given by  \eqref{eq:varphilinear}.
Assume further that the following holds for all $s,t,u$
\begin{itemize}
    \item[(i)] $h$ captures rewards: $\E[h(\ve_{\xi^t,t,u};\xi^1,\ldots,\xiT)|\calG_t]\leq \Re(s,\xi^t,u)$ for actions $u$ that are feasible in state $s$. 
    \item[(ii)] $g$ captures transitions: $g(\ve_{\xi^t,t,u};s,\xi^T,\ldots,\xit,\xi_\Theta)\leq g(\vec 0;\Tr(s,\xit,u),\xi^T,\ldots,\xi^{t-1},\xi_\Theta)$.
\end{itemize}
Then, $\varphi$ satisfies monotonicity with exclusion sets 
\begin{align*}
\calB(t,s)= \crl{\omega\in\Omega: \not\exists X[\omega] \text{ solving } \varphi(t,s|\calG_t) \text{ s.t. } X_{\xit,t,u} \geq 1 \text{ for some }u\in\U}.
\end{align*}
\end{proposition}

It is natural to say that $h$ captures the reward if the incremental effect of taking the action $u$ given input $\xit$ is equal to the immediate reward $\E[h(\ve_{\xi^t,t,u};\xi^1,\ldots,\xiT)|\calG_t]=\Re(s,\xi^t,u)$. 
It is similarly natural to say that $g$ captures transitions if it is {\em stable under the one-step transition}, namely, that $g(\ve_{\xi^t,t,u};s,\xi^T,\ldots,\xit,\xi_\Theta)= g(\vec 0;\Tr(s,\xit,u),\xi^T,\ldots,\xi^{t-1},\xi_\Theta)$; in other words, this means that taking the action $u$ at time $t$, has the same effect as taking no action at the state $\Tr(s,\xi^t,u)$. 
This should hold in any reasonable resource consumption problem, e.g., consuming $1$ with $B$ units of budget remaining is the same as not consuming anything with $B-1$ units. 
In the result below we make the weaker assumption that these relationships hold as inequalities.

The baseline and learning variants are useful illustrations of \cref{prop:sufficient}.

\begin{example}[Baseline] 
Let $\calG$ be the full information filtration ($\Theta=[T]$). 
In \cref{sec:framework} we introduced a linear relaxation for \off. We start by writing a relaxation in the form of \cref{prop:sufficient} and show how it subsequently simplifies to the final form in \cref{sec:framework}. 

Recall that $\accept,\reject$ denote the actions accept and reject. A natural ``expanded'' linear program is
\begin{align*}
\max\crl*{\sum_j\sum_{l=1}^tx_{j,l,\accept}r_j: \sum_{j,l}w_jx_{j,l,\accept}\leq s, 0\leq x_{j,l,\accept}\leq \In{\xi^l=j}}.
\end{align*}
Defining the auxiliary variables $x_j\defeq \sum_lx_{j,l,\accept}$, this is equivalent to  $\varphi(t,s|\calG)=\max\crl{\vr'\vx:\vw'\vx\leq s, 0\leq \vx\leq Z^{t}}$, where, recall $Z_j^t=\sum_{l=1}^t\In{\xi^l=j}$ counts the number of type-$j$ arrivals in the last $t$ periods. 

This $\varphi$ also has the form of \cref{prop:sufficient}, with the functions $h$ and $g$ given by (note that the action \reject has zero objective coefficient)
\begin{align*}
h(\vx;\xi^1,\ldots,\xit) \defeq \sum_jx_{j\accept}r_j
\quad \text{ and } \quad
g(\vx;s,\xiT,\ldots,\xi^1) \defeq 
\begin{pmatrix}
s-\sum_jx_{j\accept} \\
Z^{t}-\vx
\end{pmatrix}.
\end{align*}
Conditions (i) and (ii) can be easily verified now. The objective $h$ is a linear function of the decision vector $\vx$ and the constraint function $g$ aggregates $\xi$ into the sums $Z^{t}$.
\end{example}

In the learning setting, \off is presented with a public type $j$ and must decide whether to accept or reject before seeing the private type, which is a reward $R_j$ drawn from an unknown distribution.

\begin{example}[Learning]
Let us model the problem with $2T$ time periods, where at even times the public type is revealed and at odd times the private (reward).
In this model, the input $\xit$ is an index $j\in[n]$ at even times and it is a reward $R\in \R$ at odd times.
Also let us model the random rewards by drawing i.i.d.\ copies $\crl{R_{jt}}_t$ of $R_j$.

Let us endow \off with the information of all even times, i.e., \off knows all the future arriving public types.
Specifically, we set $\Theta=\crl{t\in[T]:t\text{ is even}}$ (see \cref{fig:canonical} for a representation of $\calG$).
The realizations $\crl{R_{jt}}_{j,t}$, drawn at times $t\notin\Theta$, are concealed.
The expanded linear program is
\begin{align*}
\max\crl*{\sum_j\sum_{l=1}^tx_{j,l,\accept}\E[R_{j}]: \sum_{j,l}w_jx_{j,l,\accept}\leq s, 0\leq x_{j,l,\accept}\leq \In{\xi^l=j}}.
\end{align*}
As before, we can simplify this LP by aggregating variables, see \cref{sec:learning} for the details.
Here we prefer to study the expanded LP because it exemplifies the conditions in \cref{prop:sufficient}.

The objective function is $h(\vx;\xi^1,\ldots,\xit)=\sum_{j,l}x_{j,l,\accept}R_{j,l}$,  When we take expectations $\E[\cdot|\calG_t]$ we arrive at the expression $\sum_{j,l}x_{j,l,\accept}\E[R_j]$.
The constraint function $g$ is given by the feasibility region of the LP.
Conditions (i) and (ii) of \cref{prop:sufficient} hold with equality.
\end{example}

\begin{proofof}{\cref{prop:sufficient}}
Let $u\in\U$ be such that $X_{\xit,t,u}\geq 1$.
Denote $\theta_t\defeq \crl{l\in [T]: l\geq t}\cup\Theta$, so all the inputs $(\xi^l:l\in\Theta_t)$ are revealed at time $t$ (the rest are concealed).
By \cref{lem:collect},
\begin{align*}
\varphi(t,s|\calG_t) = \E[h(\ve_{\xit,t,u};\xi^1,\ldots,\xiT)|\calG_t] + \max_{ \vx}\crl{\E[h(\vx;\xi^1,\ldots,\xiT)|\calG_t]: g(\vx+\ve_{\xit,t,u};s,(\xi^l:l\in\Theta_t)) \geq \vec 0}.
\end{align*}
Using (i) and (ii) yields
\begin{equation}\label{eq:sufficient}
\varphi(t,s|\calG_t) \leq \Re(s,\xit,u)  + \max_{ \vx}\crl{\E[h(\vx;\xi^1,\ldots,\xiT)|\calG_t]: g(\vx;\Tr(s,\xit,u),(\xi^l:l\in\Theta_{t-1}))\geq \vec 0}.
\end{equation}
Since $\calG_t$ is coarser than $\calG_{t-1}$, we know that $\E[\E[\cdot |\calG_{t-1}]|\calG_t]=\E[\cdot|\calG_t]$.
Using \cref{eq:sufficient} and applying Jensen's Inequality (recall that the maximum of linear functions is a convex function) we obtain
\begin{align*}
\varphi(t,s|\calG_t) \leq \Re(s,\xit,u)  + \E\brk*{\max_{ \vx }\crl{\E[h(\vx;\xi^1,\ldots,\xiT)|\calG_{t-1}]: g(\vx;\Tr(s,\xit,u),(\xi^l:l\in\Theta_{t-1}))\geq \vec 0}\Big|\calG_t}.
\end{align*}
This corresponds to the required inequality in \cref{def:relaxed_bellman}.
\end{proofof}

The sufficient conditions in \cref{prop:sufficient} are not necessary; they are not satisfied in the probing setting (\cref{sec:probing}) or in the pricing setting (\cref{sec:pricing}). 
Nevertheless, we are still able to show monotonicity and draw the desired regret bounds.  
\section{Additional Details from \texorpdfstring{\cref{sec:probing}}{Online Probing} (Online Probing)}
\label[appendix]{sec:probingproofs}

We first state and prove an auxiliary lemma which we need for our proofs.
\begin{lemma} 
\label{lem:concave} 
	Consider the standard-form LP $(P[\vd]):\max\{\vr'\vx:M\vx=\vd,\vx\geq 0\}$, where $M\in\R^{m\times n}$ is an arbitrary constraint matrix and $\vd\in\R^m$. 
	The function $\vd\mapsto P[\vd]$ is concave and therefore, if $X$ is a random right-hand side, then $\E[P[X]]\leq P[\E[X]]$. 
\end{lemma}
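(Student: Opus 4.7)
The plan is to establish concavity of $\vd \mapsto P[\vd]$ directly from the LP formulation, and then obtain the inequality $\E[P[X]] \leq P[\E[X]]$ as an immediate consequence of Jensen's inequality for concave functions.

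To show concavity, I would fix two right-hand sides $\vd_1, \vd_2 \in \R^m$ for which the LP is feasible, together with a scalar $\lambda \in [0,1]$. Let $\vx_1$ and $\vx_2$ be optimal solutions to $(P[\vd_1])$ and $(P[\vd_2])$ respectively, and consider the convex combination $\vx_\lambda \defeq \lambda \vx_1 + (1-\lambda) \vx_2$. By linearity, $M \vx_\lambda = \lambda \vd_1 + (1-\lambda)\vd_2$, and $\vx_\lambda \geq 0$ as a convex combination of nonnegative vectors, so $\vx_\lambda$ is feasible for $(P[\lambda \vd_1 + (1-\lambda)\vd_2])$. Its objective value is $\vr'\vx_\lambda = \lambda P[\vd_1] + (1-\lambda) P[\vd_2]$, which then gives
\[
P[\lambda \vd_1 + (1-\lambda)\vd_2] \;\geq\; \lambda P[\vd_1] + (1-\lambda) P[\vd_2],
\]
as desired. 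The (mild) subtlety is handling right-hand sides at which the LP is infeasible: by the convention $P[\vd] = -\infty$ in that case, the inequality above holds trivially whenever either $\vd_1$ or $\vd_2$ is infeasible, so concavity extends to all of $\R^m$ in the extended reals.

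With concavity in hand, the second statement follows from Jensen's inequality: for any integrable random vector $X$ with $\E[X]$ finite,
\[
\E[P[X]] \;\leq\; P[\E[X]].
\]
In our applications $X$ takes values in a discrete set (e.g., realized arrival counts) so integrability and the conditions for Jensen are easily verified; no further obstacle arises. Overall the argument is essentially a one-line convex-combination check, with the only thing worth stating carefully being the treatment of infeasible right-hand sides.
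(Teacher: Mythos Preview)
Your proof is correct and the Jensen step is identical to the paper's. The only difference is in how concavity is obtained: you argue on the primal side by taking a convex combination of optimal solutions, whereas the paper passes to the dual $(D[\vd]):\min\{\vd'\vy:M'\vy\geq \vr\}$ and observes that, since the dual feasible set does not depend on $\vd$, the map $\vd\mapsto D[\vd]$ is a pointwise minimum of linear functions and hence concave (with $P[\vd]=D[\vd]$ by strong duality). Your primal argument is arguably more elementary since it avoids invoking duality, at the cost of needing the optimum to be attained (which you implicitly assume when you pick $\vx_1,\vx_2$); the dual argument avoids that but uses strong duality. In the paper's applications the LPs are feasible and bounded, so either route works without further care.
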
 
\begin{proof}
	The dual problem is $(D[\vd]):\min\crl{\vd'\vy:M'\vy \geq \vr}$.
	The function $\vd\mapsto D[\vd]$ is a minimum of linear functions, therefore concave.
\end{proof}

\subsection{Bellman Inequalities and Loss}

We first establish the initial ordering property.

\begin{proofof}{\cref{lem:probing_initial}}
Consider a policy for \off determining when to probe, accept or reject.
Recall such a policy is a mapping $\pi:[T]\times\S\to\U$ s.t. $\pi(t,s)$ is $\calG_t$-measurable for all $t,s$. 
	
The policy, once fixed, induces a random trajectory determined by the realization of the probed rewards. 
Denote the random number of times where a type $j$ was probed as $X_{j\probe}$, accepted (rejected) without probing as $X_{j\accept}$ ($X_{j\reject}$), and accepted (rejected) after probe outcome is $(j,k)$ as $X_{jk\accept}$ ($X_{jk\reject}$). 
Then, we can write $\E[V(\T,(b_h,b_p)|\calG_{T})] = \E[\sum_j \bar r_jX_{j\accept} + \sum_{j,k}r_{jk}X_{jk\accept} |\calG_{T}]$, where we use the fact that, conditional on accepting without probing, the expected reward is $\bar r_j$. 
Thus we have
\begin{align*}
	\E[V(t,(b_h,b_p)|\calG_{T})] = \sum_j\bar r_j\E[X_{j\accept}|\calG_{T}] + \sum_{j,k} r_{jk}\E[X_{jk\accept}|\calG_{T}].
\end{align*} 
	
We now claim that $\E[\mathbf{X}]$ yields a feasible solution to $(P[T,(b_h,b_p),Z])$. 
Indeed, with the exception of the constraint $x_{jk\accept}+x_{jk\reject} = q_{jk}x_{j\probe}$, the random variables satisfy a.s.\ all the constraints of $(P[T,(b_h,b_p),Z])$.
	Furthermore, since \off's policy is adapted to $\calG$, we obtain $\E[X_{jk\accept}+X_{jk\reject}|X_{j\probe},\calG_{T}]=q_{jk}X_{j\probe}$, thus the expected values satisfy the desired constraint.
	To summarize, $V(T,(b_h,b_p)|\calG_{T})$ equals the value of the feasible solution given by the expectations.
\end{proofof}

Next we establish the monotonicity condition in \cref{def:relaxed_bellman}.

\begin{proof}[Proof of~\cref{lem:probing_ineq}] 
Observe that the monotonicity condition in~\cref{def:relaxed_bellman} translates to the following condition in the online probing setting.
\begin{align*}
\varphi(t,(b_h,b_p,\varnothing)|\calG_t) \leq \max_{\diamond\in\crl{\accept,\probe,\reject}}\crl{\E_{\xi^{t-1/2}}[\varphi(t-1/2,(s_{\diamond},\diamond)|\calG_{t-1/2})|\calG_t]} \quad \forall \omega\notin\calB(t,s).
\end{align*}
where the state $s_{\diamond}=(b_h-1,b_p)$ if $\diamond=\accept$, $s_{\diamond}=(b_h,b_p-1)$ if $\diamond=\probe$ and $s_{\diamond}=(b_h,b_p)$ if $\diamond=\reject$. 

First, given $\xit=i$, we have from~\cref{eq:probing_relax} that $\E_{\xi^{t-1/2}}[\varphi(t-1/2,(s_{\diamond},\diamond))|\calG_{t-1/2})|\calG_t]=P[(b_h,b_p),Z^{t-1}] $ if $\diamond = \reject$, and $r_{\xi^{t-1/2}}+P[(b_h-1,b_p),Z^{t-1}] $ if $\diamond = \accept$. 
Now for cases (1) and (2), the claim in the lemma follows directly by invoking~\cref{lem:collect}.

For case (3) we need to introduce some notation.
Let $\vec q_{j}\in\R^{n\times m}$ be a vector with value $q_{jk}$ in components $(j,k)$, $k\in[m]$, and zero otherwise (i.e.\ in components $(j',k)$ with $j'\neq j$).
Similarly, let $\vec 1_{(j,k)}\in\R^{n\times m}$ have value $1$ in the single component $(j,k)$ and zero otherwise.
We also rewrite the LP in~\cref{eq:probing_lp} with an extra `budget vector' $\vw$ such that $P[(b_h,b_p),\vz]=\bar P[(b_h,b_p),\vz,\vec 0]$.
	\begin{alignat*}{2}
	\label{eq:probing_lp1}
	(\bar P[(b_h,b_p),\vz,\vw]) \qquad & \text{maximize: } & & \sum_{j,k}r_{jk}x_{jk\accept}+ \sum_j \bar{r}_jx_{j\accept} \\
	& \text{subject to: }& \quad & 
	\begin{aligned}[t]
	\sum_{j,k}x_{jk\accept}+\sum_jx_{j\accept} & \leq b_h \nonumber\\
	\sum_{j}x_{j\probe} & \leq b_p \nonumber\\
	x_{j\accept} +x_{j\probe} + x_{j\reject} & = z_j& \forall\,j & \in [n] \nonumber\\
	x_{jk\accept} +x_{jk\reject} - q_{jk}x_{j\probe}& = w_{jk} & \forall\,j & \in [n], k \in [m] \nonumber\\
	\vx & \geq 0 \nonumber
	\end{aligned}
	\end{alignat*}
	Now if $\bar{X}_{i\probe}\geq 1$ and $\xi^{t-1/2}=(i,k)$ is such that either $\bar{X}_{ik\accept}\geq 1$ or $\bar{X}_{ik\reject}\geq 1$, then by \cref{lem:collect}, we have the following decomposition (depending on the random $\xi^{t-1/2}$)
	\begin{align*}
	\bar P[(b_h,b_p),Z^{t},\vec 0] = r_{\xi^{t-1/2}}\In{\bar{X}_{ik\accept}\geq 1} + \bar P[(b_h-\In{\bar{X}_{ik\accept}\geq 1},b_p-1),Z^{t-1},\vec q_{\xit}-\vec 1_{\xi^{t-1/2}}], \quad \forall\omega\notin\calB(t,b_h,b_p)
	\end{align*}
	where the vectors $\vec q,\vec 1$ are evaluated in random components; since by assumption $\bar X_{i\probe}\geq 1$ under the optimal solution, the optimal value in the optimization problem is the same as the reward obtained ``now'' ($r_{\xi^{t-1/2}}$) and the residual value after discounting $b_p$ by one. 
	Taking expectations $\E[\cdot |\calG_t]$ and using \cref{lem:concave} we have 
	\begin{align*}
	P[(b_h,b_p),Z^{t}] &= \E[r_{\xi^{t-1/2}}\In{\bar{X}_{ik\accept}\geq 1} + \bar P[(b_h-\In{\bar{X}_{ik\accept}\geq 1},b_p-1),Z^{t-1},\vec q_{\xit}-\vec 1_{\xi^{t-1/2}}] |\calG_t] \\
	&\leq \E[r_{\xi^{t-1/2}}\In{\bar{X}_{ik\accept}\geq 1}+ \bar P[(b_h-\In{\bar{X}_{ik\accept}\geq 1},b_p-1),Z^{t-1}, \vec 0] |\calG_t]\\
	&\leq \E[\max\crl{r_{\xi^{t-1/2}}+P[(b_h-1,b_p-1),Z^{t-1}],P[(b_h,b_p-1),Z^{t-1}]}|\calG_t].
	\end{align*}
	The last inequality, following from substituting $\In{\bar{X}_{ik\accept}\geq 1}\in\{0,1\}$, gives the desired result.
\end{proof}

\section{Additional Details from \texorpdfstring{\cref{sec:pricing}}{Dynamic Pricing} (Dynamic Pricing)}
\label[appendix]{sec:pricingproofs}

\subsection{Proof of \texorpdfstring{\cref{lem:pricing_ineqs}}{}}

Throughout this subsection, we fix some indexes $j',l'$.
To complete the proof of the proposition, it remains to establish that, whenever $X_{j'l'}\geq 1$, then   $\E[\BL(t+1,\vb,j',l')|\calG_t]\leq 0$, where 
$$    \BL(t+1,\vb,j',l') = P[\vb-A_{j'}Q_{j'l'}(t+1),Q(t+1),Z^{t}] - \E_{t+1}[ P[\vb-A_{j'}Y_{j'l'},Q(t),Z^{t}]].$$

\paragraph{The Correction LP.} Let us fix $(t,\vb,\vq,\vz)$ and denote $\bar \vx$ the solution of $P[\vb-A_{j'}q_{j'l'},\vq,\vz]$. 
To bound the loss, we must bound the right-hand side of \eqref{eq:losspricing}, which captures the perturbation of budgets from $\vb-A_{j'}q_{j'l'}$ to $\vb-A_{j'}Y_{j'l'}$ and the perturbation of fractions from $\vq$ to $\vq+\Delta$, where $\Delta$ is a zero-mean random vector. 

Let us re-formulate $P[\vb-A_{j'}Y_{j'l'},\vq+\Delta,\vz]$ based on how much we need to correct $\bar \vx$:
\begin{equation*}
\begin{array}{rrll}
(P[\vb-A_{j'}Y_{j'l'},\vq+\Delta,\vz]) \qquad \max_{\vy} & \qquad \sum_{j,l}\f_{jl}(q_{jl}+\Delta_{jl})(\bar x_{jl}-y_{jl}) \medskip\\
\text{s.t.}& 
\qquad \sum_{j,l}a_{ij}(q_{jl}+\Delta_{jl})(\bar x_{jl}-y_{jl}) &\leq b_i-a_{ij'}Y_{j'l'} & \qquad \forall i \\
&  \sum_l (\bar x_{jl}-y_{jl}) &\leq z_j &\qquad \forall j   \\
&  \bar \vx-\vy &\geq 0.
\end{array}
\end{equation*}
The new formulation uses decision variables $\vy$, which may be negative, and correspond to how much we movement there is from the initial solution $\bar\vx$ to the new one.

Let us denote the resource-slack variables of $P[\vb-A_{j'}q_{j'l'},\vq,\vz]$ by $(s_i\geq 0:i\in[d])$, i.e., $\sum_{j,l}a_{ij}q_{jl}\bar x_{jl} +s_i =b_i-a_{ij'}q_{j'l'}$.
Similarly, let us denote the demand-slack variables by $(u_j\geq 0:j\in [n])$, i.e., $\sum_l\bar x_{jl}+u_j=z_j$.
Using the slack variables, the problem simplifies to \begin{equation}\label{eq:correction1}
\begin{array}{rrll}
P[\vb-A_{j'}Y_{j'l'},\vq+\Delta,\vz] = \sum_{j,l}f_{jl}(q_{jl}+\Delta_{jl})\bar x_{jl} - \min_{\vy} & \qquad \sum_{j,l}\f_{jl}(q_{jl}+\Delta_{jl})y_{jl} \medskip\\
\text{s.t.}& 
\qquad \sum_{j,l}a_{ij}(q_{jl}+\Delta_{jl})y_{jl} &\geq \beta_i  & \qquad \forall i \\
&  \sum_l y_{jl} & \geq -u_j &\qquad \forall j   \\
&  y &\leq \bar\vx,
\end{array}
\end{equation}
where we defined $\beta_i\defeq a_{ij'}(Y_{j'l'}-q_{j'l'})-s_i+\sum_{j,l}a_{ij}\Delta_{jl}\bar x_{jl}$.

Observe that, since $\E[\Delta]=0$, the first term outside the minimization, namely $ \sum_{j,l}f_{jl}(q_{jl}+\Delta_{jl})\bar x_{jl}$, equals $ \sum_{j,l}f_{jl}q_{jl}\bar x_{jl}= P[\vb-A_{j'}q_{j'l'},\vq,\vz]$ in expectation.
The following result readily proves \cref{lem:pricing_ineqs}.

\begin{lemma}[Correction LP]\label{prop:correction}
If we denote $\vq=Q(t+1)$, then the Bellman Loss is bounded by $\E[\BL(t+1,\vb,j',l')]\leq \E[\CP[Y_{j'l'},\vq,\Delta]]$, where $(\CP[Y_{j'l'},\vq,\Delta])$ is the minimization problem in \cref{eq:correction1}.
Furthermore, $\E[\BL(t+1,\vb,j',l')] \leq 0$.
\end{lemma}
\begin{proof}
Recall that $\beta_i=a_{ij'}(Y_{j'l'}-q_{j'l'})-s_i+\sum_{j,l}a_{ij}\Delta_{jl}\bar x_{jl}$ and observe that $\E[\beta_i]\leq 0$ for all $i$.
We will find some deterministic values $c_i$ such that the objective value of $\CP[Y_{j'l'},\vq,\Delta]$ is upper bounded by $\sum_{i}c_i\beta_i$, which proves the result.

We argue the upper bound on $(\CP[Y_{j'l'},\vq,\Delta])$ by bounding the optimal dual solution.
The dual of $\CP[Y_{j'l'},\vq,\Delta]$ is
\begin{align*}
\max_{\mu,\lambda,\theta \geq 0}\crl*{ \beta'\mu-u'\lambda-\sum_{j,l}\bar x_{jl}\theta_{jl} :  (q_{jl}+\Delta_{jl})A_j'\mu+\lambda_j -\theta_{jl}\leq f_{jl}(q_{jl}+\Delta_{jl}) \quad \forall j,l  }
\end{align*}
This problem is the dual of a feasible and finite problem (see \cref{eq:correction1}), hence it has an optimal finite solution and we can bound $\mu_i\leq c_i$ for some deterministic values $c_i$.
The objective value of this maximization problem is upper bounded by $\beta'c$, which proves the result.
\end{proof}

\subsection{Proof of \texorpdfstring{\cref{lem:pricing_dual}}{}} 

Recall that we wish to establish the following: if $\hat{\varphi}$ (used by \onl) has a solution with $x_{jl'}=\max_{l}x_{jl}>>1$, then posting price $\f_{jl}$ is a satisfying action. 
To establish this, it remains to bound the difference between the LP $\SP[v_t,\vq,\vz]$ and its ``perturbed'' version $\SP[V_t,\vq+\Delta\vq,\vz+\Delta\vz]$. 
To that end, we first establish a bound on $v_t-V_t$; see item (i) in the discussion following \cref{lem:sel}.

\begin{lemma}\label{lem:fluid_gap}
For fixed $\vb$, denote $V_t = P[\vb,Q(t),Z^{t}]$ and $v_t = P[\vb,\E[Q(t)],\E[Z^{t}]]$.
If $t\geq c$, then, with probability at least $1-c/t^2$, we have $v_t-V_t\geq -c\sqrt{t\log(t)}$. 
The constant $c$ is independent of $\vb$ and depends on $(\vf,F_1,\ldots,F_n)$ only.
\end{lemma}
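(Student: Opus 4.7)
The plan is a direct LP sensitivity argument combining the DKW concentration of \cref{lem:dkw} with the linear dependence of $P[t,b,\cdot]$ on $\vq$ (which enters both the knapsack constraint and the objective). Set $\vq := \E[Q(t)]$, $\tilde{\vq} := Q(t)$, and $\Delta := \tilde{\vq} - \vq$. By \cref{lem:dkw}, on an event $\calE$ of probability at least $1 - 2/t^2$ we have $\|\Delta\|_\infty \leq \sqrt{\log(t)/t}$, and I argue on $\calE$ hereafter.

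Let $x^*$ be an optimizer of $P[t,b,\tilde{\vq}]$, so $V_t = \sum_i \f_i \tilde{q}_i x_i^*$, $\sum_i \tilde{q}_i x_i^* \leq b$, and $\|x^*\|_1 \leq t$. To produce a feasible solution for the deterministic LP $P[t,b,\vq]$ I simply rescale: set $\tilde{x} := \alpha x^*$ with $\alpha := \min\{1,\, b/\sum_i q_i x_i^*\}$ (and $\alpha := 1$ if $x^* = 0$). Nonnegativity, the constraint $\|\tilde{x}\|_1 \leq t$, and the knapsack constraint $\sum_i q_i \tilde{x}_i \leq b$ are then immediate from this choice. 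The identity $\sum_i q_i x_i^* = \sum_i \tilde{q}_i x_i^* - \sum_i \Delta_i x_i^*$ combined with $|\sum_i \Delta_i x_i^*| \leq \|\Delta\|_\infty \|x^*\|_1 \leq \sqrt{t\log t}$ yields $\sum_i q_i x_i^* \leq b + \sqrt{t\log t}$, whence $\alpha \geq b/(b+\sqrt{t\log t})$.

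For the objective value, the analogous decomposition gives
\[
    \sum_i \f_i q_i x_i^* \;=\; V_t - \sum_i \f_i \Delta_i x_i^* \;\geq\; V_t - \f_1 \sqrt{t\log t},
\]
so that $v_t \geq \sum_i \f_i q_i \tilde{x}_i = \alpha \sum_i \f_i q_i x_i^* \geq \alpha V_t - \f_1 \sqrt{t\log t}$. Using the trivial upper bound $V_t \leq \f_1 b$ (from $\sum_i \tilde{q}_i x_i^* \leq b$ and $\f_i \leq \f_1$) one bounds the residual $(1-\alpha) V_t \leq \bigl(\sqrt{t\log t}/(b+\sqrt{t\log t})\bigr)\f_1 b \leq \f_1 \sqrt{t\log t}$, and collecting terms gives $v_t - V_t \geq -2\f_1 \sqrt{t\log t}$, which is the claimed inequality (the constant in front absorbs into the problem-primitive prefactor).

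The content is essentially algebraic and the main thing to verify is that the scaling construction is uniformly valid across all the regimes of $b/t$ described in \cref{prop:structure}, including the boundary case $b \leq tq_1$ where $V_t = \f_1 b$ has no dependence on $\tilde{\vq}$ and the inequality is almost vacuous; the definition $\alpha = \min\{1,\cdot\}$ handles every case at once. The threshold $t \geq c$ enters only in order for the DKW deviation $\sqrt{\log(t)/t}$ to be controlled (e.g., dominated by $q_1 = \bar F(\f_1)$, so that the perturbed parameters stay within a reasonable neighborhood of $\vq$), which is why $c$ depends only on $\bar F(\f_1)$.
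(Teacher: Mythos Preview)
Your argument is correct and takes a genuinely different route from the paper's. The paper sets $\vq=Q(t)$, $\Delta=\E[Q(t)]-Q(t)$ (the reverse of your convention) and invokes the correction-LP machinery from the Bellman-loss analysis: writing $v_t=V_t+\vf\circ\Delta'\bar\vx-\min\{\cdots\}$ and upper-bounding the minimization by the explicit feasible point $z_i=\frac{\Delta_i^+}{q_i+\Delta_i}\bar x_i$. Feasibility of that $\vz$ (namely $\vz\le\bar\vx$) is precisely what forces the threshold $t\ge c$, since one needs $\sqrt{\log(t)/t}\le Q_i(t)$ on the DKW event. Your rescaling $\tilde x=\alpha x^*$ is more elementary: it bypasses the correction LP entirely, and in fact your computation never uses the hypothesis $t\ge c$ (so your closing paragraph slightly misattributes its role --- in your approach the threshold is superfluous, not merely mild). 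The price you pay is the constant: you obtain $v_t-V_t\ge -2\f_1\sqrt{t\log t}$ rather than the stated $-\f_1\sqrt{t\log t}$. That is strictly weaker than the lemma as written, but harmless downstream, since in \cref{lem:pricing_dual} this bound is absorbed into $g(t)=(2\bar\lambda_1\f_1+\bar\lambda_2)\sqrt{t\log t}$ and only the order $\sqrt{t\log t}$ matters.
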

\begin{proof}
Set $\vq = Q(t)$, $\vz = Z^{t}$, $\Delta\vq = \E[Q(t)]-Q(t)$, and $\Delta\vz = \E[Z^{t}]-Z^{t}$.
Take $\bar\vx$ to be a solution of $V_t$ and use a correction program analogous to \cref{eq:correction1} to conclude
\begin{equation} \label{eq:correction_fluid}
  \begin{array}{rrll}
v_t = V_t +\sum_{j,l}f_{jl}\Delta q_{jl}\bar x_{jl} - \min_{\vy} & \qquad \sum_{j,l}\f_{jl}(q_{jl}+\Delta q_{jl})y_{jl} \medskip\\
\text{s.t.}& 
\qquad \sum_{j,l}a_{ij}(q_{jl}+\Delta q_{jl})y_{jl} &\geq \beta_i  & \qquad \forall i \\
&  \sum_l y_{jl} +tp_j & \geq \sum_l\bar x_{jl} &\qquad \forall j   \\
&  \vy &\leq \bar\vx,
\end{array}  
\end{equation}
where $\beta_i=-s_i+\sum_{j,l}a_{ij}\Delta q_{jl}\bar x_{jl}$.
We will argue an upper bound on the minimization problem by exhibiting a feasible solution.

Set $g(t)\defeq \sqrt{log(t)/t}$ and consider the solution $y_{jl}=\bar x_{jl} \frac{g(t)}{q_{jl}+\Delta q_{jl}}$.
First recall that, by \cref{lem:dkw}, $\abs{\Delta q_{jl}}\leq g(t)$ with high probability.
The objective value of this solution is $\sum_{j,l}f_{jl}\bar x_{jl}g(t)$, hence from \cref{eq:correction_fluid} we get $v_t\geq V_t -2\sum_{j,l}f_{jl}g(t)\bar x_{jl}$.
From here, using the fact that $\bar \vx$ solves an LP with the constraint $\sum_lx_{jl}\leq Z_j^{t}$ for all $j$ and that $Z_j^{t}\leq t$ a.s., we conclude the result by using that $\bar x_{jl}\leq t$.

We are left to check that our solution $\vy$ is feasible for the LP in \cref{eq:correction_fluid}.
The first set of constraints is satisfied because $g(t)\geq \Delta q_{jl}$.
The second set of constraints is satisfied since $\sum_{l}\bar x_{jl}\leq Z_j^{t}$ and $Z_j^{t}\leq tp_j+\sqrt{t\log (t)}$ w.h.p.
Finally, the constraints $\vy\leq \bar\vx$ are  satisfied since $g(t)\leq q_{jl}+\Delta q_{jl}$ for all $t$ large enough.
\end{proof}

\begin{proofof}{\cref{lem:pricing_dual}}
Let us denote $\theta=(v,\vq,\vz)$ and $\theta+\Delta\theta =(v+\Delta v,\vq+\Delta\vq,\vz+\Delta\vz)$.
Recall that $\vb$ and $t$ are fixed throughout.
The selection program for a fixed component $(j',l')$ is given by
\begin{align*}
\begin{array}{rrll}
(\SP[\theta]) \qquad \max & \qquad x_{j'l'} \\
\text{s.t.}& \qquad \sum_{j,l}f_{jl}q_{jl}x_{jl} &\geq v \\
& \sum_{j,l}a_{ij}q_{jl}x_{jl} &\leq b_i  & \qquad \forall i \\
&  \sum_l x_{jl} & \leq z_j &\qquad \forall j   \\
&  \vx &\geq 0.
\end{array}
\end{align*}

If $\bar X$ is the solution to $P[\vb,Q(t),Z^{t}]$ (used by \off) and $\bar \vx$ is the solution to $P[\vb,\E[Q(t)],\E[Z^{t}]]$ (used by \onl), we want to prove $\bar X_{j'l'}\geq \bar x_{j'l'}-c\sqrt{t\log(t)}$.
Equivalently, our aim is to prove the following:
\begin{align*}
\SP[\theta+\Delta\theta] \geq \SP[\theta] - c\sqrt{t\log(t)} \qquad \text{ w.p. } 1-c/t^2.
\end{align*}

We argue via Lagrangian relaxation.
The Lagrangian of the selection problem with parameters $\theta+\Delta$ is given by
\begin{align*}
 L(\vx,\lambda;\theta+\Delta\theta) &= x_{j'l'} +\lambda_0\prn*{\sum_{j,l}f_{jl}(q_{jl}+\Delta q_{jl})x_{jl}-v-\Delta v}
+\sum_i\lambda_i\prn*{b_i-\sum_{j,l}a_{ij}(q_{jl}+\Delta q_{jl})x_{jl}}\\
&+\sum_j\lambda_j\prn*{z_j+\Delta z_j-\sum_lx_{jl}}  \\
&= L(\vx,\lambda;\theta) +\lambda_0\prn*{\sum_{j,l}f_{jl}\Delta q_{jl}x_{jl}-\Delta v}
-\sum_i\lambda_i\sum_{j,l}a_{ij}\Delta q_{jl}x_{jl}
+\sum_j\lambda_j\Delta z_j
\end{align*}

Define $D\defeq\crl{\vx: \vx\geq 0, \norm{\vx}_\infty\leq t}$.
Observe that both $\SP[\theta]$ and $\SP[\theta+\Delta\theta]$ have solutions $\vx\in D$.
From \cref{lem:dkw} and \cref{lem:fluid_gap} we have the following with probability $1-c/t^2$:
\begin{align*}
\abs{\Delta q_{jl}x_{jl}} \leq  \sqrt{t\log(t)} \quad\forall \vx\in D, \quad
\Delta v \leq \sqrt{t\log(t)}, \quad
\Delta z_j \geq \sqrt{t \log(t)}.
\end{align*}

Let $\lambda^\star$ be the optimal dual variables of $\SP[\theta+\Delta\theta]$.
We claim that there is a constant $c$ such that $\norm{\lambda^\star}_\infty \leq c$.
Assuming this claim, from the previous equation we get
\begin{align*}
 L(\vx,\lambda;\theta+\Delta\theta) \geq L(\vx,\lambda;\theta) -c\sqrt{t \log(t)} \quad \forall \vx\in D.
\end{align*}

Using Strong Duality for the problem $\SP[\theta+\Delta\theta]$ we have
\begin{align*}
\SP[\theta+\Delta\theta] &= \max_{\vx\geq 0}L(\vx,\lambda^\star;\theta+\Delta\theta) \\
&= \max_{\vx\in D}L(\vx,\lambda^\star;\theta+\Delta\theta) \\
&\geq \max_{\vx\in D}L(\vx,\lambda^\star;\theta) -c\sqrt{t \log(t)} \\
& \geq \SP[\theta]  -c\sqrt{t \log(t)}.
\end{align*}
In the last step we used weak duality.
Finally, to bound $\norm{\lambda^\star}_\infty \leq c$ we observe that the dual feasible region is defined by $\lambda\geq 0$ and the following set of inequalities, where $\delta$ is the Kronecker delta:
\begin{align*}
-f_{jl}q_{jl}\lambda_0 +q_{jl}\sum_{i}a_{ij}\lambda_i +\lambda_j \geq \delta_{j'l'} \quad \forall j,l.
\end{align*}
These inequalities are independent of $(t,\vb)$, hence we can bound uniformly the extreme points.
\end{proofof}

\section{Additional Details from \texorpdfstring{\cref{sec:learning}}{Learning Section} (Distribution-Agnostic Knapsack)} 
\label{sec:proofs_learning}

\begin{proofof}{\cref{theo:reg_learning}}
To apply \cref{theo:resolve}, we first bound the measure of the exclusion sets $\calB$ and the ``disagreement'' sets $\calQ$. 
Recall that $\calB(t,b)$ is given in \cref{lem:relax_knapsack} and $\calQ(t,b)$ is the event where $\hUt$ is not a satisfying action.
 
Let $\sigma:[n]\to [n]$ be an ordering of $[n]$ w.r.t.\ the ratios $\bar r_j:=\frac{r_j}{w_j}$ such that $\sigma_j =1 $ if $j$ has the highest ratio. Similarly, let $\sigmat:[n]\to [n]$ be the ordering w.r.t.\ ratios $\bRt_j\defeq \Rt_j/w_j$.

Call $E^t$ the event $\calB(t,\Bt)\cup\calQ(t,\Bt)$, then
\begin{align*}
\Pr[E^t] = \Pr[E^t,\sigma = \sigmat] + \Pr[E^t,\sigma\neq\sigmat]
\leq \Pr[E^t,\sigma = \sigmat] + \Pr[\sigma\neq\sigmat].
\end{align*}

Let $\Nt_j$ be the number of type-$j$ samples observed by the beginning of period $t$.
By definition, since we are given a sample of each type before the process starts, we have $\Nt_j=Z_j^{T}-Z_j^{t}+1$. 
Since the reward distribution is sub-Gaussian, it satisfies the Chernoff bound \citep{concentration_book}
\begin{equation}\label{eq:subgauss}
\Pr[\Rt_j-r_j\geq x|\Nt_j],\Pr[\Rt_j-r_j\leq x|\Nt_j] \leq e^{-\Nt_jx^2/2} \qquad \forall x\in\R,
\end{equation}

A union bound relying on \cref{eq:subgauss} gives that 
\begin{align*}
\Pr[\sigma\neq\sigmat|\calF_t] \leq \Pr[ \exists j \text{ s.t. } \abs{\bar r_j-\bRt_j}\geq \delta/2|\calF_t]
\leq 2\sum_je^{-\Nt_j(w_j\delta)^2/8}. 
\end{align*}
The variable $\Nt_j$, recall, is the number of type-$j$ samples observed by the beginning of period $t$, hence $\Nt_j-1$ is a $\Bin(T-t,p_j)$ random variable. 
It a known fact that, given $\theta>0$, $\mathbb{E}[e^{-\theta\Bin(p,m)}]=(1-p+pe^{-\theta})^m$, thus 
\begin{align*}
\Pr[\sigma\neq\sigmat]=\E[\Pr[\sigma\neq\sigmat|\calF_t]] \leq 2\sum_je^{-(w_j\delta)^2/8}(1-p_j+p_je^{-(w_j\delta)^2/8})^{T-t}. 
\end{align*}
Upper bounding by a geometric sum yields
\begin{equation} \label{eq:reg2}
\reg_2 \defeq
\sum_t \Pr[\sigma\neq\sigmat] \leq 2\sum_j \frac{1}{p_j(e^{(w_j\delta)^2/8}-1)}
\leq 2\sum_j\frac{8}{p_j(w_j\delta)^2}.    
\end{equation}

We are left to bound $\Pr[E^t,\sigma = \sigmat]$.
Let us assume w.l.o.g.\ that the indexes are ordered so that $\bar r_1\geq \bar r_2 \geq \ldots \geq \bar r_n$.
The optimal solution of $(P[\Bt,\vr,Z^{t}])$, i.e., \off's problem, is to sort the items and accept starting from $j=1$, without exceeding the capacity $\Bt$ or the number of arrivals $Z_j^{t}$. 
Mathematically, the optimal solution $\Xts$  to $(P[\Bt,\vr,Z^{t}])$ is
\begin{align*}
\Xts_{1\accept} = \min\crl*{Z_1^t,\frac{\Bt}{w_1}}, \quad 
\Xts_{j\accept} = \min\crl*{Z_j^{t},\frac{\Bt-\sum_{i<j}w_i\Xts_{i\accept}}{w_j}} \quad j=2,\ldots,n.
\end{align*}
For the proxy $(P[\Bt,\Rt,\mu(t)])$, the optimal solution has the same structure with $Z_j^{t}$ replaced everywhere by $\mu_j(t)$.

Let $\xit=j$ and $U$ be any action in $\argmax\crl{\Xt_{j,u}:u=\accept,\reject}$.
We study first the case $U=\accept$. If 
$\Xts_{j,\accept}\geq 1$ then $U=\accept$ would be, by \cref{lem:relax_knapsack}, a satisfying action. 
If it is not a satisfying action it must then be that $\Xts_{j,\accept}<1$ and since the algorithm chooses to accept it must be also that $\Xt_{j,\accept}\geq \mu_j(t)/2$. Thus we obtain the following two conditions
\begin{align*}
\Xts_{j,\accept}<1 \Rightarrow \sum_{i<j}w_iZ_i^t \geq b \quad \text{ and } \quad 
\Xt_{j,\accept}\geq \mu_j(t)/2 \Rightarrow
\sum_{i<j}w_i\mu_i(t) + w_j\mu_j(t)/2\leq b.
\end{align*}

In the case $U=\reject$, $\Xts_{j,\reject}<1$ and $\Xt_{j,\reject}\geq \mu_j(t)/2$ imply
\begin{align*}
\sum_{i\leq j}w_iZ_i^{t} \leq b \quad \text{ and } \quad 
\sum_{i<j}w_i\mu_i(t) + w_j\mu_j(t)/2\geq b.
\end{align*}
In conclusion, 
\begin{align*}
\Pr[E^t,\sigma = \sigmat] \leq 
\max\crl*{ \Pr\brk*{\sum_{i\leq j}w_i(Z_i^{t}-\mu_i(t)) \geq \frac{w_j\mu_j(t)}{2}},  \Pr\brk*{\sum_{i\leq j}w_i(Z_i^{t}-\mu_i(t) )\leq -\frac{w_j\mu_j(t)}{2}} }.
\end{align*}

These probabilities are bounded symmetrically using the method of averaged bounded differences \citep[Theorem 5.3]{dubhashi}.
Indeed, using the natural linear function $f(\xi^1,\ldots,\xit)=\sum_iw_i\sum_{l=1}^t\In{\xi^l=i}$, the differences are bounded by $\abs{\E[f|\calF_l]-\E[f|\calF_{l-1}]}\leq\wmax$, hence 
\begin{align*}
\reg_1 \defeq
\sum_t\Pr[E^t,\sigma = \sigmat] \leq \sum_t\sum_jp_j \exp\prn*{-\frac{2(w_j\mu_j(t)/2)^2}{t\wmax^2}}
\leq 2\sum_j \frac{(\wmax/w_j)^2}{p_j}.
\end{align*}
Together with \cref{eq:reg2}, we have the desired bound.
\end{proofof}

\section{Connections to Information Relaxations}
\label[appendix]{sec:inforelax}

Our work is related to the information-relaxation framework developed in \citep{info_relaxation,info_relaxation2}. The information-relaxation framework is a fairly general way to endow \off with additional information, but at the same time forcing him to pay a penalty for using this information. The dual problem (with the penalties) is an upper bound on the performance of the best online policy. 

The main distinctions with our approach are:
\begin{itemize}
    \item[1.] Information Relaxation requires to identify \off's filtration and penalties to build a proxy for \off's value function. This proxy can then be used to assess the performance of specific online policies. 
    
    The proxy that is developed---as the true \off value in our framework---may be difficult to compute. To overcome this difficulty, \citep{info_relaxation2} proposes an approximation through which penalties can be computed and hence an upper bound can be obtained. 
    
    \item[2.] Our framework requires, as well, identifying a suitable information structure (a filtration) and a relaxation $\varphi$.
    Because we allow for a Bellman Loss, we can develop $\varphi$ $\hatphi$ that are computationally tractable. In most cases, a linear program. The framework explicitly then provides a mechanism, the \rabbi algorithm, to derive a good online policy. 
\end{itemize}

There is also an explicit mathematical connection. To state it, we first present a weaker version of our Bellman Inequalities, called thus because it is easier to find an object $\varphi$ under this definition.
Recall that, for a given non-anticipatory policy $\pi$, we denote $\von_\pi$ the expected value.
Observe that the distinction with \cref{def:relaxed_bellman} is in the initial ordering condition; we now require $\phi$ to upper bound the online value instead of the best offline.

\begin{definition}[Weak Bellman Inequalities]\label{def:weak_bellman}
The sequence of r.v.\ $\crl{\varphi(t,s)}_{t\in\T,s\in\S}$  satisfies the Weak Bellman Inequalities w.r.t.\ filtration $\calG$ and events $\calB(t,s)\subseteq\Omega$ if $\varphi(t,s)$ is $\calG_t$-measurable for all $t,s$ and the following holds:
\begin{enumerate}
\item Initial ordering: $\max_{\pi}\von_\pi \leq \E[\varphi(\T,S^\T|\calG_{\T})]$, where $S^T$ is the initial state.
\item Monotonicity: $\forall s\in\S,t\in[T],\omega\notin\calB(t,s)$, 
\begin{equation}\label{eq:weak_bellman}
\varphi(t,s|\calG_t) \leq \max_{u\in\U}\crl{\Re(s,\xit,u)+\E[\varphi(t-1,\Tr(s,\xit,u)|\calG_{t-1})|\calG_t]}.
\end{equation}
\end{enumerate}
\end{definition}

In Proposition 2.1 in \citep{info_relaxation2} it is shown that if $\varphi$ is some function that satisfies the Bellman equation for \off with the penalized immediate rewards function, then, in particular, it satisfies the initial ordering above. Since such $\varphi$ satisfies, by construction, the Bellman inequality the following is an immediate corollary. 

\begin{proposition}[Proposition 2.1 in \citep{info_relaxation2}]
Given feasible penalties $z_t$, the penalized value function satisfies \cref{def:relaxed_bellman} with exclusion sets $\calB(t,s)=\varnothing$.
\end{proposition}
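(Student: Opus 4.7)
The plan is to verify the three conditions of Definition~\ref{def:relaxed_bellman} for the penalized value function $\varphi$. The key observation is that $\varphi$, by the very construction used in the information-relaxation framework, is the value function of a dynamic program on the richer filtration $\calG$ with immediate rewards $\Re(s,\xit,\u)-z_t$. Consequently $\varphi$ automatically satisfies a Bellman equation for this modified problem, and most of the work consists in matching that recursion to the monotonicity and initial-ordering conditions required by our framework.

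For monotonicity with $\calB(t,s)=\varnothing$, I would start from the pathwise Bellman equation $\varphi(t,s|\calG_t)=\max_{\u}\{\Re(s,\xit,\u)-z_t+\E[\varphi(t-1,\Tr(s,\xit,\u)|\calG_{t-1})|\calG_t]\}$, which holds for every $\omega$ by construction. The monotonicity inequality in Definition~\ref{def:relaxed_bellman} then follows by either absorbing $-z_t$ into the immediate reward (as permitted by the extended form of Bellman inequalities noted in the remark about random/action-dependent rewards and losses) or by setting $\BL(t,s)=-z_t$ directly. In either accounting, no exclusion is needed because the recursion holds for all sample paths, so $\calB(t,s)=\varnothing$. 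The terminal condition $\varphi(0,\cdot)=0$ is built into the definition of any finite-horizon DP.

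For the initial ordering, I would invoke Prop 2.1 of~\citep{info_relaxation2} directly. That result establishes that, for any feasible penalty $z$, the penalized offline value function upper bounds the expected reward achievable by any non-anticipative policy. Once we identify the relevant offline object in our framework with the penalized offline value (so that the ``stronger'' filtration $\calG$ coincides with the one used by the information-relaxation setup), the cited proposition yields $\E[V(\T,S^\T)|\calG_T]\leq \E[\varphi(\T,S^\T)|\calG_{\T}]$, which is exactly the initial-ordering condition of Definition~\ref{def:relaxed_bellman}.

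The main obstacle is the careful alignment between the quantity controlled by Prop 2.1 of~\citep{info_relaxation2} (a bound on the best online policy value) and the initial ordering in Definition~\ref{def:relaxed_bellman} (a bound relative to $V(\T,S^\T)$). In standard information-relaxation setups these objects agree once $V$ is understood as the penalized offline value under $\calG$; this is precisely why the Weak Bellman Inequalities in Definition~\ref{def:weak_bellman} were introduced, since the penalized value function canonically satisfies the weak form, and the strong form follows by the above identification together with the pathwise Bellman equation that drives monotonicity.
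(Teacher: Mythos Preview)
Your proposal is correct and follows essentially the same route as the paper: the paper also treats the result as an immediate corollary, observing that the penalized value function satisfies the Bellman equation by construction (giving monotonicity with no exclusion sets) and invoking Proposition~2.1 of \cite{info_relaxation2} for the initial ordering. Your additional care in flagging the distinction between the weak initial ordering (a bound on $\max_\pi \von_\pi$) and the strong form in Definition~\ref{def:relaxed_bellman} is well placed---indeed, the paper's own justification points to ``the initial ordering above,'' which is the weak version in Definition~\ref{def:weak_bellman}, so your reading that the strong form requires identifying $V$ with the penalized offline object is the right way to reconcile the statement as written.
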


Our framework is a structured approach for building a computationally tractable $\varphi$, and deriving an online policy is bounded regret, without pre-computing penalties.

\newpage
\section{Parameter for the Pricing Instance}\label{sec:table}

\begin{center}
\begin{adjustbox}{angle=270}
\footnotesize
\begin{tabular}{llllllllllllllllllllll}
                               &                         & \multicolumn{20}{c}{Type $j$}                                                                                                                                 \\
                               & \multicolumn{1}{l|}{}   & 1     & 2     & 3     & 4     & 5     & 6     & 7     & 8     & 9     & 10    & 11    & 12    & 13    & 14    & 15    & 16    & 17    & 18    & 19    & 20    \\ \cline{2-22} 
\multirow{25}{*}{\rotatebox{90}{Resource $i$}} & \multicolumn{1}{l|}{1}  & 1     & 0     & 0     & 1     & 0     & 1     & 0     & 0     & 0     & 0     & 1     & 0     & 1     & 1     & 0     & 1     & 1     & 0     & 0     & 1     \\
                               & \multicolumn{1}{l|}{2}  & 0     & 0     & 0     & 0     & 1     & 1     & 1     & 1     & 0     & 0     & 0     & 0     & 0     & 0     & 0     & 1     & 0     & 1     & 0     & 0     \\
                               & \multicolumn{1}{l|}{3}  & 0     & 1     & 0     & 1     & 0     & 1     & 1     & 1     & 0     & 0     & 1     & 1     & 1     & 1     & 1     & 0     & 1     & 1     & 1     & 1     \\
                               & \multicolumn{1}{l|}{4}  & 1     & 0     & 1     & 1     & 1     & 0     & 0     & 1     & 0     & 0     & 0     & 0     & 1     & 0     & 1     & 1     & 0     & 0     & 1     & 1     \\
                               & \multicolumn{1}{l|}{5}  & 1     & 1     & 1     & 1     & 0     & 0     & 0     & 1     & 1     & 0     & 0     & 0     & 0     & 0     & 0     & 1     & 1     & 1     & 0     & 0     \\
                               & \multicolumn{1}{l|}{6}  & 0     & 1     & 0     & 0     & 1     & 0     & 0     & 0     & 1     & 1     & 0     & 0     & 0     & 0     & 1     & 0     & 0     & 0     & 0     & 1     \\
                               & \multicolumn{1}{l|}{7}  & 0     & 0     & 0     & 1     & 1     & 1     & 1     & 1     & 1     & 0     & 0     & 1     & 1     & 0     & 1     & 1     & 1     & 1     & 0     & 1     \\
                               & \multicolumn{1}{l|}{8}  & 0     & 1     & 0     & 0     & 0     & 0     & 1     & 1     & 1     & 1     & 1     & 0     & 0     & 0     & 1     & 1     & 1     & 0     & 0     & 1     \\
                               & \multicolumn{1}{l|}{9}  & 1     & 0     & 0     & 1     & 0     & 0     & 1     & 1     & 0     & 1     & 1     & 1     & 0     & 1     & 1     & 0     & 0     & 0     & 0     & 0     \\
                               & \multicolumn{1}{l|}{10} & 1     & 0     & 0     & 0     & 0     & 0     & 0     & 0     & 0     & 0     & 1     & 0     & 1     & 1     & 0     & 1     & 0     & 0     & 1     & 0     \\
                               & \multicolumn{1}{l|}{11} & 1     & 0     & 1     & 1     & 0     & 0     & 0     & 1     & 0     & 0     & 1     & 1     & 1     & 1     & 0     & 0     & 1     & 0     & 1     & 1     \\
                               & \multicolumn{1}{l|}{12} & 1     & 0     & 1     & 0     & 1     & 1     & 0     & 0     & 1     & 1     & 0     & 1     & 1     & 0     & 1     & 0     & 1     & 0     & 0     & 0     \\
                               & \multicolumn{1}{l|}{13} & 1     & 1     & 1     & 0     & 0     & 0     & 1     & 1     & 1     & 0     & 1     & 1     & 0     & 1     & 1     & 1     & 0     & 0     & 0     & 0     \\
                               & \multicolumn{1}{l|}{14} & 1     & 0     & 1     & 0     & 0     & 0     & 1     & 0     & 0     & 1     & 1     & 1     & 0     & 0     & 1     & 0     & 1     & 0     & 0     & 1     \\
                               & \multicolumn{1}{l|}{15} & 1     & 1     & 0     & 1     & 1     & 0     & 1     & 0     & 0     & 1     & 0     & 1     & 1     & 1     & 0     & 0     & 1     & 1     & 1     & 0     \\
                               & \multicolumn{1}{l|}{16} & 0     & 1     & 0     & 1     & 0     & 0     & 0     & 0     & 0     & 0     & 1     & 0     & 1     & 0     & 0     & 0     & 0     & 1     & 0     & 0     \\
                               & \multicolumn{1}{l|}{17} & 1     & 0     & 1     & 1     & 1     & 0     & 0     & 0     & 0     & 0     & 0     & 1     & 0     & 0     & 1     & 0     & 0     & 0     & 0     & 0     \\
                               & \multicolumn{1}{l|}{18} & 0     & 1     & 1     & 1     & 1     & 1     & 0     & 0     & 0     & 0     & 1     & 1     & 0     & 1     & 1     & 1     & 1     & 0     & 1     & 0     \\
                               & \multicolumn{1}{l|}{19} & 0     & 1     & 1     & 1     & 1     & 1     & 1     & 1     & 0     & 0     & 1     & 0     & 0     & 1     & 1     & 0     & 0     & 0     & 1     & 0     \\
                               & \multicolumn{1}{l|}{20} & 0     & 0     & 1     & 0     & 1     & 1     & 0     & 0     & 1     & 0     & 1     & 0     & 1     & 1     & 0     & 1     & 0     & 1     & 1     & 1     \\
                               & \multicolumn{1}{l|}{21} & 0     & 1     & 0     & 1     & 0     & 0     & 1     & 0     & 1     & 1     & 0     & 1     & 1     & 1     & 1     & 1     & 0     & 0     & 1     & 0     \\
                               & \multicolumn{1}{l|}{22} & 1     & 1     & 1     & 0     & 1     & 1     & 1     & 0     & 0     & 1     & 0     & 0     & 0     & 1     & 0     & 0     & 1     & 1     & 1     & 0     \\
                               & \multicolumn{1}{l|}{23} & 0     & 0     & 0     & 0     & 1     & 1     & 0     & 0     & 1     & 1     & 1     & 0     & 1     & 0     & 1     & 1     & 1     & 0     & 0     & 0     \\
                               & \multicolumn{1}{l|}{24} & 0     & 1     & 1     & 1     & 0     & 1     & 1     & 1     & 1     & 0     & 0     & 1     & 1     & 1     & 0     & 1     & 1     & 1     & 1     & 1     \\
                               & \multicolumn{1}{l|}{25} & 0     & 0     & 0     & 1     & 0     & 0     & 0     & 0     & 1     & 0     & 1     & 0     & 1     & 1     & 1     & 1     & 0     & 1     & 1     & 0     \\ \hline
                               & $p_j$                   & 0.094 & 0.047 & 0.011 & 0.047 & 0.082 & 0.011 & 0.011 & 0.058 & 0.105 & 0.07  & 0.094 & 0.011 & 0.011 & 0.058 & 0.023 & 0.07  & 0.058 & 0.058 & 0.07  & 0.011 \\
                               & $f_{j1}$                & 18    & 4     & 5     & 13    & 4     & 3     & 20    & 15    & 17    & 16    & 20    & 12    & 16    & 13    & 8     & 5     & 10    & 13    & 20    & 16    \\
                               & $f_{j2}$                & 4     & 2     & 3     & 6     & 3     & 2     & 16    & 14    & 14    & 2     & 2     & 4     & 2     & 3     & 4     & 4     & 2     & 7     & 6     & 3     \\
                               & $f_{j3}$                & 1     & 1     & 2     & 2     & 2     & 1     & 1     & 7     & 6     & 1     & 1     & 1     & 1     & 1     & 1     & 3     & 1     & 1     & 2     & 2     \\
                               & $q_{j1}$                & 0.108 & 0.116 & 0.025 & 0.062 & 0.162 & 0.069 & 0.305 & 0.169 & 0.016 & 0.129 & 0.197 & 0.496 & 0.009 & 0.114 & 0.023 & 0.171 & 0.056 & 0.104 & 0.202 & 0.22  \\
                               & $q_{j2}$                & 0.329 & 0.21  & 0.495 & 0.233 & 0.229 & 0.223 & 0.458 & 0.33  & 0.191 & 0.215 & 0.579 & 0.966 & 0.046 & 0.154 & 0.105 & 0.648 & 0.291 & 0.137 & 0.618 & 0.27  \\
                               & $q_{j3}$                & 0.408 & 0.335 & 0.585 & 0.619 & 0.396 & 0.281 & 0.764 & 0.44  & 0.452 & 0.563 & 0.62  & 0.993 & 0.269 & 0.682 & 0.26  & 0.852 & 0.723 & 0.993 & 0.802 & 0.588
\end{tabular}
\end{adjustbox}
\end{center}

\end{APPENDICES}

\end{document}